\numberwithin{equation}{section}
\newcommand{\beg}{\begin{equation}}
\newcommand{\eeg}{\end{equation}}
\newcommand{\ben}{\begin{eqnarray*}}
	\newcommand{\een}{\end{eqnarray*}}
\newtheorem{thm}{Theorem}[section]
\newtheorem{cor}[thm]{Corollary}
\newtheorem{lem}[thm]{Lemma}
\newtheorem{prop}[thm]{Proposition}
\numberwithin{equation}{section} 
\theoremstyle{definition}
\newtheorem{defn}[thm]{Definition}
\newtheorem{eg}[thm]{Example}
\newcommand{\HS}{\mathcal H}
\newcommand{\D}{\mathbb{D}}
\newcommand{\T}{\mathbb{T}}
\newcommand{\ft}{\mathcal F_O}
\newcommand{\gn}{\mathbb{G}_n}
\newcommand{\M}{\mathcal{M}}
\newcommand{\ov}{\overline}
\begin{document}
\title[The c.n.u. $\Gamma_n$-contractions]
{A Nagy-Foias program for a c.n.u. $\Gamma_n$-contraction}

\author[Bappa Bisai]{Bappa Bisai}
\address[Bappa Bisai]{Stat-Math Unit, Indian Statistical Institute, 203 B.T Road, Baranagar, Kolkata-700106, India.} \email{bappa.bisai1234@gmail.com}

\author[Sourav Pal]{Sourav Pal}
\address[Sourav Pal]{Mathematics Department, Indian Institute of Technology Bombay,
Powai, Mumbai - 400076, India.} \email{sourav@math.iitb.ac.in}

\keywords{Symmetrized polydisc, Fundamental operator tuple, Model theory, Complete unitary invariant.}

\subjclass[2010]{47A13, 47A20, 47A25, 47A45}

\thanks{The first named author has been supported by the Ph.D fellowship of the University Grants Commissoin (UGC) and the Visiting Scientist Fellowship of Indian Statistical Institute, Kolktata. The second named author
is supported by the Seed Grant of IIT Bombay, the CPDA and the
MATRICS Award (Award No. MTR/2019/001010) of
Science and engineering Research Board (SERB), India.}

\begin{abstract}

A tuple of commuting Hilbert space operators $(S_1, \dots, S_{n-1}, P)$ having the closed symmetrized polydisc 
\[
\Gamma_n = \left\{ \left(\sum_{i=1}^{n}z_i, \sum\limits_{1\leq i<j\leq n} z_iz_j, \cdots, \prod_{i=1}^{n}z_i\right) : |z_i|\leq 1\,, \; \; \; 1\leq i \leq n-1 \right\}
\]
as a spectral set is called a $\Gamma_n$-contraction. From the literature we have that a point $(s_1, \dots , s_{n-1},p)$ in $\Gamma_n$ can be represented as $s_i=c_i+pc_{n-i}$ for some $(c_1, \dots, c_{n-1}) \in \Gamma_{n-1}$. We construct a minimal $\Gamma_n$-isometric dilation for a particular class of c.n.u. $\Gamma_n$-contractions $(S_1, \cdots, S_{n-1},P)$ and obtain a functional model for them. With the help of this model we express each $S_i$ as $S_i=C_i+PC_{n-i}$, which is an operator theoretic analogue of the scalar result. We also produce an abstract model for a different class of c.n.u. $\Gamma_n$-contractions satisfying $S_i^*P=PS_i^*$ for each $i$. By exhibiting a counter example we show that such abstract model may not exist if we drop the hypothesis that $S_i^*P=PS_i^*$. We apply this abstract model to achieve a complete unitary invariant for such c.n.u. $\Gamma_n$-contractions. Additionally, we present different necessary conditions for dilation and a sufficient condition under which a commuting tuple $(S_1, \dots , S_{n-1},P)$ becomes a $\Gamma_n$-contraction. The entire program goes parallel to the operator theoretic program developed by Sz.-Nagy and Foias for a c.n.u. contraction.

\end{abstract}

\maketitle


\section{Introduction and preliminaries}

\vspace{0.4cm}

\noindent This paper is a sequel of \cite{S:P2} and \cite{sourav14}. Throughout the paper, we consider only bounded operators acting on complex Hilbert spaces. A \textit{contraction} is an operator with norm not greater than $1$ and a \textit{completely non-unitary} (c.n.u.) contraction is a contraction which does not act like a unitary on any of its nonzero reducing subspaces.

A few decades ago, Sz.-Nagy and Foias initiated and carried out an operator theoretic program for a completely non-unitary (c.n.u.) contraction. Explicit construction of dilation, functional model and characteristic function are the principal parts of that campaign. The aim of this article is to develop an analogous theory for a class of c.n.u. operator tuples associated with the symmetrized polydisc. For $n\geq 2$, the symmetrized polydisc $\mathbb G_n$, where
\[
\mathbb G_n=\left\{\left(\sum\limits_{1\leq i\leq n}z_i, \sum\limits_{1\leq i<j\leq n}z_iz_j, \dots, \prod_{i=1}^nz_i\right): |z_i|<1, i=1, \dots, n\right\},
\] 
is a family of polynomially convex but non-convex domains which naturally arise in the spectral interpolation problem. If $\mathcal M_n(\mathbb C)$ is the space of $n\times n$ complex matrices and if $\mathcal B_1^n$ is its spectral unit ball, then $A\in \mathcal B_1^n$ (that is, the spectral radius $r(A)<1$) if and only if $\pi_n(\sigma(A)) \in \mathbb G_n$, where $\sigma(A)$ is the spectrum of $A$ and $\pi_n:\mathbb C^n \rightarrow \mathbb C^n$ is the symmetrization map
\[
\pi_n(z_1, \dots , z_n)=\left(\sum_{1\leq i\leq n} z_i,\sum_{1\leq
i<j\leq n}z_iz_j,\dots, \prod_{i=1}^n z_i \right).
\]
Note that $\mathbb G_1$ is the unit disc $\D$ and for $n\geq 2$, $\mathbb G_n$ is the symmetrization of the points in the polydisc $\D^n$, i.e. $\gn=\pi_n(\D^n)$. Here we shall consider $\gn$ for $n\geq 2$ only. Naturally a bounded domain like $\mathbb G_n$ that has complex-dimension $n$, is much easier to deal with than an unbounded $n^2$-dimensional object like $\mathcal B_1^n$. Apart from its rich function theoretic and geometric aspects (e.g. \cite{costara1, NiPa, Pal-Roy1, B-B-M-1, edi-zwo, NiPfZw, N-P-T-Zw1, Pal-Roy2, Su-T-Tu}), the symmetrized polydisc has evloved independently as an object of operator theory in past two decades, \cite{ay-jfa, ay-ieot, ay-jot, tirtha-sourav, tirtha-sourav1, S:B, J:Sarkar, S:P, B:P01, B:Pratfun, B:P, S:P2, A:Pal, sourav14}.

\begin{defn}\label{def1}
	A commuting tuple of Hilbert space operators $(S_1, \dots, S_{n-1}, P)$ is called a $\Gamma_n$-\textit{contraction} if the closed symmetrized polydisc $\Gamma_n$, where
	\[
\Gamma_n=\ov{\gn}=\left\{ \left( \sum\limits_{1\leq i\leq n}z_i, \sum\limits_{1\leq i<j\leq n}z_iz_j, \dots, \prod_{i=1}^nz_i\right): |z_i|\leq 1, i=1, \dots, n \right\},
\]
is a spectral set for $(S_1, \dots, S_{n-1}, P)$ i.e. the Taylor joint spectrum $\sigma_T(S_1,\dots, S_{n-1}, P)\subseteq \Gamma_n$ and 
	\[
	\|r(S_1, \dots, S_{n-1}, P)\| \leq \|r\|_{\infty, \Gamma_n} = \text{sup}\{|r(z_1,\dots,z_n)| : (z_1,\dots,z_n)\in \Gamma_n\},
	\]
	for all rational functions $r=p/q$ such that $q$ does not have any zero in $\Gamma_n$.
\end{defn}
Needless to mention that if $(S_1, \dots , S_{n-1},P)$ is a $\Gamma_n$-contraction, then so is $(S_1^*, \dots , S_{n-1}^*,P^*)$ and $P$ is a contraction. It was proved in \cite{S:P2} that to every $\Gamma_n$-contraction $(S_1, \dots, S_{n-1},P)$, there are unique operators $A_1, \dots, A_{n-1}\in \mathcal{B}(\mathcal{D}_P)$ such that 
\[
S_i - S_{n-i}^*P = D_PA_iD_P \,, \qquad i=1, \dots, n-1.
\] 
The unique operator tuple $(A_1, \dots, A_{n-1})$, which was named the \textit{fundamental operator tuple} or in short $\ft$-\textit{tuple} of $(S_1, \dots, S_{n-1},P)$, plays central role in every section of operator theory on the symmetrized polydisc, (e.g. \cite{S:P, S:P2, B:P01, B:Pratfun, B:P, S:P1, sourav14}).

Unitaries, isometries and co-isometries are special classes of contractions. There are natural analogues of these classes for $\Gamma_n$-contractions (see Section \ref{commodel}). The following theorem provides a more explicit description of $\Gamma_n$-unitaries and $\Gamma_n$-isometries.

\begin{thm}[\cite{sourav14}, Theorems 4.2 \& 4.4]\label{charofgamuniiso}
	A commuting tuple of operators $(S_1, \dots, S_{n-1},P)$ is a $\Gamma_n$-unitary (or, a $\Gamma_n$-isometry) if and only if $(S_1, \dots, S_{n-1},P)$ is a $\Gamma_n$-contraction and $P$ is a unitary (isometry).
\end{thm}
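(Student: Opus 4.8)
The plan is to prove both biconditionals in two directions, reducing the isometry statement to the unitary one. Write $b\Gamma_n=\pi_n(\mathbb{T}^n)$ for the distinguished (Shilov) boundary of $\Gamma_n$, and take a $\Gamma_n$-unitary to mean a commuting \emph{normal} tuple whose Taylor joint spectrum lies in $b\Gamma_n$, and a $\Gamma_n$-isometry to mean the restriction of a $\Gamma_n$-unitary to a common invariant subspace. The forward direction of the unitary statement is then soft: the spectral theorem for commuting normal operators furnishes a joint spectral measure supported on $b\Gamma_n\subseteq\Gamma_n$, so $\|r(S_1,\dots,S_{n-1},P)\|=\sup_{b\Gamma_n}|r|\le\|r\|_{\infty,\Gamma_n}$ for every rational $r$ with no pole on $\Gamma_n$; hence $\Gamma_n$ is a spectral set and the tuple is a $\Gamma_n$-contraction. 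Since every point of $b\Gamma_n$ has last coordinate of modulus $1$ and $P$ is normal, $\sigma(P)\subseteq\mathbb{T}$ forces $P$ to be unitary.

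Conversely, let $(S_1,\dots,S_{n-1},P)$ be a $\Gamma_n$-contraction with $P$ unitary. Then $D_P=(I-P^*P)^{1/2}=0$, so the fundamental-operator identities $S_i-S_{n-i}^*P=D_PA_iD_P$ collapse to $S_i=S_{n-i}^*P$ for each $i$, and Fuglede's theorem gives that each $S_i$ commutes with $P^*$ as well. The joint spectrum is already correctly placed: by the projection property of the Taylor spectrum the last coordinate of any point of $\sigma_T(S_1,\dots,S_{n-1},P)\subseteq\Gamma_n$ lies in $\sigma(P)\subseteq\mathbb{T}$, and the only points of $\Gamma_n$ with $|p|=1$ are those of $b\Gamma_n$. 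Moreover, once each $S_i$ is known to be normal, Fuglede--Putnam applied to the relations $S_iS_j=S_jS_i$ yields $S_i^*S_j=S_jS_i^*$, so the whole tuple becomes normal and the spectral theorem reconstructs the $\Gamma_n$-unitary structure.

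Hence everything hinges on the single point I expect to be the main obstacle, namely normality of each $S_i$. For $n=2$ this is immediate, since $S=S^*P$ with $P$ unitary gives $SS^*=S^*PP^*S=S^*S$; but for $n\ge 3$ the identities only couple $S_i$ with $S_{n-i}$ (we get $S_i=S_{n-i}^*P$ and $S_{n-i}=S_i^*P$), and a direct computation shows they force merely that $S_i$ is normal if and only if $S_{n-i}$ is, never normality on its own. One must therefore genuinely exploit the spectral-set hypothesis: I would feed the rational functions that cut out $\Gamma_n$ (evaluated using the boundary relation $s_{n-i}=\bar s_i\,p$) into the spectral-set inequality and run a norm-equality argument to force $S_i^*S_i=S_iS_i^*$. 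This is where the real work lies.

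The isometry statement reduces to the unitary one. For the forward direction, a $\Gamma_n$-isometry is $(\tilde S_1,\dots,\tilde S_{n-1},\tilde P)|_{\mathcal{H}}$ for a $\Gamma_n$-unitary on $\mathcal{K}\supseteq\mathcal{H}$ with $\mathcal{H}$ invariant; since $\Gamma_n$ is polynomially convex, the Oka--Weil theorem reduces the spectral-set inequality to polynomials, for which restriction to $\mathcal{H}$ is immediate, so the restriction is a $\Gamma_n$-contraction and $P=\tilde P|_{\mathcal{H}}$ is an isometry because $\tilde P$ is unitary. For the converse, given a $\Gamma_n$-contraction with $P$ an isometry, I would pass to the minimal unitary extension $\tilde P$ of $P$ on some $\mathcal{K}\supseteq\mathcal{H}$ (with $\mathcal{H}$ invariant and $\tilde P|_{\mathcal{H}}=P$) and extend each $S_i$ to a commuting $\tilde S_i$ on $\mathcal{K}$ by means of the fundamental operators, arranged so that $(\tilde S_1,\dots,\tilde S_{n-1},\tilde P)$ is a $\Gamma_n$-contraction with $\tilde P$ unitary; by the unitary case this extension is a $\Gamma_n$-unitary, exhibiting the original tuple as a $\Gamma_n$-isometry. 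Checking that the extended tuple is again a $\Gamma_n$-contraction is the isometric analogue of the obstacle flagged above.
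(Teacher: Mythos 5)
There is a genuine gap, and it sits exactly where you locate it. Note first that the paper offers no proof of this statement at all: it is imported verbatim from \cite{sourav14} (Theorems 4.2 and 4.4), so there is no in-paper argument to compare against; your proposal has to stand on its own. The soft halves you do carry out are fine: the spectral theorem handles the forward implication for $\Gamma_n$-unitaries, polynomial convexity of $\Gamma_n$ plus Oka--Weil handles the forward implication for $\Gamma_n$-isometries, and the reduction $D_P=0\Rightarrow S_i=S_{n-i}^*P$ together with the Taylor-spectrum projection property correctly isolates normality of the $S_i$ as the remaining issue. But the two steps that constitute the actual content of the theorem are left as declared intentions rather than arguments. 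First, for $n\ge 3$ you never prove that each $S_i$ is normal; your own computation shows the identities $S_i=S_{n-i}^*P$, $S_{n-i}=S_i^*P$ only yield $S_iS_i^*=S_{n-i}^*S_{n-i}$ and the commutation of $S_iS_i^*$ and $S_i^*S_i$ with $P$, which is strictly weaker, and the proposed remedy (``feed the rational functions that cut out $\Gamma_n$ into the spectral-set inequality and run a norm-equality argument'') is not a proof but a research direction. The known arguments here are not a routine norm computation; they proceed either by induction on $n$ through the associated $\Gamma_{n-1}$-contraction $\bigl(\tfrac{n-1}{n}S_1,\dots,\tfrac{1}{n}S_{n-1}\bigr)$ or through the family of fractional maps defining $\Gamma_n$, and without one of these the converse of the unitary statement is unproved.

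Second, the converse of the isometry statement is likewise only sketched, and the sketch points in a direction that is harder than the standard one. Producing commuting extensions $\tilde S_i$ of the $S_i$ on the minimal unitary extension space of $P$ such that $(\tilde S_1,\dots,\tilde S_{n-1},\tilde P)$ is again a $\Gamma_n$-contraction is precisely the kind of dilation-type assertion that, in this very paper, requires extra hypotheses (compare Theorem \ref{spdilation}, which needs $\Sigma_1(z)$ and $\Sigma_2(z)$ to be $\Gamma_{n-1}$-contractions); you cannot simply ``arrange'' it. The usual proof instead shows that the Wold decomposition of the isometry $P$ reduces each $S_i$ (an intertwining argument in the spirit of Lemma \ref{inter} kills the off-diagonal blocks), disposes of the unitary part by the unitary case, and models the pure part explicitly by Toeplitz operators $T_{F_i^*+F_{n-i}z}$ as in Theorem \ref{puremodel}. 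As written, both converses --- the heart of the theorem --- remain open in your proposal.
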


It was shown in \cite{S:P} that every $\Gamma_n$-contraction $(S_1, \dots,S_{n-1},P)$ on $\mathcal{H}$ admits a canonical decomposition into a $\Gamma_n$-unitary $(S_{11}, \dots, S_{(n-1)1},P_1)$ and a $\Gamma_n$-contraction $(S_{12},\dots, S_{(n-1)2},P_2)$ for which $P_2$ is a c.n.u. (completely non-unitary) contraction. This reason naturally led to the following definition.
\begin{defn}\label{cnugamma}
	A commuting tuple of operators $(S_1, \dots, S_{n-1},P)$ on $\mathcal{H}$ is said to be a c.n.u. $\Gamma_n$-contraction if $(S_1, \dots, S_{n-1},P)$ is a $\Gamma_n$-contraction and $P$ is a c.n.u. contraction.
\end{defn}
There are several independent characterizations for the $\Gamma_n$-unitaries in the literature (see \cite{S:B, A:Pal, sourav14}). So, naturally we have keen interests in finding operator model for the c.n.u. $\Gamma_n$-contractions. In \cite{sourav14}, a Schaffer type explicit dilation and functional model were obtained for a $\Gamma_n$-contraction under certain conditions. In Theorem \ref{commutativemodel}, we construct an explicit Sz.-Nagy-Foias type $\Gamma_n$-isometric dilation for a certain c.n.u. $\Gamma_n$-contraction $(S_1, \dots , S_{n-1},P)$ on the minimal Sz.-Nagy-Foias isometric dilation space of $P$ and as a consequence we have a functional model for it. By an application of this model, we represent in Theorem \ref{representation} a c.n.u. $\Gamma_n$-contraction $(S_1, \dots , S_{n-1},P)$ as $S_i=C_i+PC_{n-i}^*$. The $\ft$-tuple is the main ingredient in these constructions. In the same Section, we find two different necessary conditions for this $\Gamma_n$-isometric dilation and as a consequence of this dilation we obtain a sufficient condition under which a commuting tuple $(S_1, \dots , S_{n-1},P)$ becomes a $\Gamma_n$-contraction.

Also, with the help of the $\ft$-tuples of $(S_1, \dots,S_{n-1},P)$ and $(S_1^*, \dots, S_{n-1}^*, P^*)$, we construct an explicit abstract operator model in Theorem \ref{noncommutemodel} for a c.n.u. $\Gamma_n$-contraction satisfying $S_i^*P=PS_i^*$ for each $i$. This model is not necessarily a commutative one and is different from the model of Theorem \ref{commutativemodel}. We give a counter example to show that the conclusion of Theorem \ref{noncommutemodel} is not true if we drop the condition $S_i^*P=PS_i^*$ for each $i$.

The characteristic function of Sz.-Nagy and Foias, \cite{Nagy} is a complete unitary invariant for a c.n.u. contraction. In Section \ref{completunitaryinvariant}, we show that the $\ft$-tuple and the characteristic function of $P$ constitute a complete unitary invariant for a c.n.u. $\Gamma_n$-contraction satisfying $S_i^*P=PS_i^*$. In Section \ref{lastSec}, we present an alternative proof to the famous Beurling-Lax-Halmos representation theorem (see \cite{S:B}) describing the joint-invariant subspaces of a pure $\Gamma_n$-isometry. Section \ref{commodel} deals with a brief literature of $\Gamma_n$-contraction and some necessary preparatory results.

\vspace{0.4cm}

\section{A brief literature and preparatory results}\label{commodel}

\vspace{0.4cm}

\noindent  As we have already mentioned that $\Gamma_n$-unitaries, $\Gamma_n$-isometries and $\Gamma_n$-co-isometries are special classes of $\Gamma_n$-contractions. We first recall their definitions from the literature (see \cite{S:B}).
\begin{defn}\label{def3}
	Let $(S_1, \dots, S_{n-1}, P)$ be a commuting tuple acting on a Hilbert space $\mathcal{H}$. We say that $(S_1, \dots, S_{n-1}, P)$ is
	\begin{itemize}
		\item [(i)] a $\Gamma_n$-\textit{unitary} if $S_1, \dots, S_{n-1} \text{ and }P$ are normal
		operators and the Taylor joint spectrum $\sigma(S_1, \dots, S_{n-1},P)$ is contained in
		the distinguished boundary $b\Gamma_n$ of $\Gamma_n$, where $b\Gamma_n = \{(s_1, \dots, s_{n-1}, p)\in \Gamma_n :
		|p|=1 \}$;
		
		\item [(ii)] a $\Gamma_n$-\textit{isometry} if there exist a
		Hilbert space $\mathcal K$ containing $\mathcal H$ as a closed subspace and a $\Gamma_n$-unitary $(\tilde{S}_1,\dots,\tilde{S}_{n-1},\tilde{P})$ on $\mathcal K$ such
		that $\mathcal H$ is a common invariant subspace of $\tilde{S}_1, \dots, \tilde{S}_{n-1}, \tilde{P}$ and
		that $S_i=\tilde{S}_i|_{\mathcal H}$ for each $i$ and
$		P=\tilde{P}|_{\mathcal H}$ ; 
		
		\item [(iii)] a $\Gamma_n$-\textit{co-isometry} if $(S_1^*,\dots, S_{n-1}^*,P^*)$ is a $\Gamma_n$-isometry.
	\end{itemize}
\end{defn}
Note that $D_P$ and $\mathcal D_P$ are the defect operator and defect space respectively of a contraction $P$ acting on a Hilbert space $\mathcal{H}$. Suppose $\Lambda_P = \{ z\in \mathbb C : (I - zP^*) \text{ is invertible}\}$. For $z \in \Lambda_P$, the \textit{characteristic function} of $P$ (see \cite{Nagy}) is defined as
\begin{equation}\label{char}
\Theta_P(z) = [-P + zD_{P^*}(I - zP^*)^{-1}D_P]|_{\mathcal{D}_P}\,.
\end{equation} 
By virtue of the relation $PD_P = D_{P^*}P$, $\Theta_P(z)$ maps $\mathcal{D}_P$ into $\mathcal{D}_{P^*}$ for every $z \in \Lambda_P$. Since $\mathbb{D} \subseteq \Lambda_P$, so for every $z \in \mathbb{D}$, $\Theta_P(z)$ maps $\mathcal{D}_P$ into $\mathcal{D}_{P^*}$\,.
Consider 
\begin{equation}\label{delta}
\Delta_P(t)=(I_{\mathcal{D}_P}-\Theta_P(e^{it})^*\Theta_P(e^{it}))^{1/2},
\end{equation}
 for those $t$ at which $\Theta_P(e^{it})$ exists, on $L^2(\mathcal{D}_P)$ and 
\[
\textbf{H}=H^2(\mathcal{D}_{P^*})\oplus \overline{\Delta_P(L^2(\mathcal{D}_P))}\ominus \{\Theta_Pu \oplus \Delta_Pu: u \in H^2(\mathcal{D}_P)\}.
\]

The canonical decomposition of a contraction separates the unitary part from its c.n.u. part. Sz.-Nagy and Foias obtained the following model for a c.n.u. contraction.

\begin{thm}[\textbf{Sz.-Nagy and Foias}, \cite{Nagy}]\label{nagymodel}
	Let $P$ be a c.n.u. contraction defined on a Hilbert space $\mathcal{H}$. Then $\mathcal{H}$ can be identified with $\textbf{H}$ and $P$ can be identified with $P_{\textbf{H}}(M_z \oplus M_{e^{it}}|_{\overline{\Delta_P(L^2(\mathcal{D}_P))}})|_{\textbf{H}}$.
\end{thm}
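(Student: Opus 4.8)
The plan is to derive the model directly from the minimal unitary dilation of $P$ and then to read off its functional form using the defect operators and the characteristic function. First I would construct the minimal isometric dilation $V$ of the contraction $P$ on a space $\mathcal{K}_+ \supseteq \mathcal{H}$ (e.g.\ via the Sch\"affer construction) and extend it to the minimal unitary dilation $U$ on $\mathcal{K} = \bigvee_{n \in \mathbb{Z}} U^n \mathcal{H}$. Using $Ph = P_{\mathcal{H}} Uh$ one checks that $(U - P)h$ is orthogonal to $\mathcal{H}$ and that $\|(U - P)h\| = \|D_P h\|$, so the wandering subspace $\mathcal{W} = \overline{(U - P)\mathcal{H}}$ is unitarily isomorphic to $\mathcal{D}_P$ and, by the symmetric computation with $U^*$ and $P^*$, there is a companion wandering subspace $\mathcal{W}_*$ isomorphic to $\mathcal{D}_{P^*}$. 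A short orthogonality argument gives $\mathcal{K}_+ \ominus \mathcal{H} = \bigoplus_{n \geq 0} U^n \mathcal{W}$, which will be the subspace I must identify explicitly.

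The essential structural input is that $U$ is absolutely continuous exactly because $P$ is c.n.u. I would prove this by showing that a nontrivial singular part of the spectral measure of $U$ would generate a subspace reducing $U$ whose interaction with $\mathcal{H}$ yields a nonzero reducing subspace of $\mathcal{H}$ on which $P$ acts as a unitary, contradicting the complete non-unitarity of Definition~\ref{cnugamma}. Absolute continuity is precisely what permits $U$ to be realised as multiplication by $e^{it}$ on a vector-valued $L^2$-space, which drives the rest of the argument.

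Next I would set up two spectral (Fourier) representations of $\mathcal{K}$: one with respect to $\mathcal{W} \cong \mathcal{D}_P$, carrying $U$ to $M_{e^{it}}$ on $L^2(\mathcal{D}_P)$ and the outgoing subspace $\bigoplus_{n \geq 0} U^n \mathcal{W}$ to $H^2(\mathcal{D}_P)$, and one with respect to $\mathcal{W}_* \cong \mathcal{D}_{P^*}$, carrying $U$ to $M_{e^{it}}$ on $L^2(\mathcal{D}_{P^*})$. The unitary operator intertwining these two representations, when evaluated on the Hardy summand, is exactly the characteristic function $\Theta_P$ of \eqref{char}, and the part of $L^2(\mathcal{D}_P)$ not absorbed into $H^2(\mathcal{D}_{P^*})$ through $\Theta_P$ is identified with the residual space $\overline{\Delta_P(L^2(\mathcal{D}_P))}$, where $\Delta_P$ is as in \eqref{delta}. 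In this combined realisation $\mathcal{K}_+$ becomes $H^2(\mathcal{D}_{P^*}) \oplus \overline{\Delta_P(L^2(\mathcal{D}_P))}$, the dilation $V = U|_{\mathcal{K}_+}$ acts as $M_z \oplus M_{e^{it}}$, and the outgoing subspace $\bigoplus_{n \geq 0} U^n \mathcal{W}$ is carried onto $\{\Theta_P u \oplus \Delta_P u : u \in H^2(\mathcal{D}_P)\}$, the isometry $u \mapsto \Theta_P u \oplus \Delta_P u$ being the image of the shift generated by $\mathcal{W}$. Removing this subspace recovers $\mathcal{H}$ as $\textbf{H}$.

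Finally, since $P = P_{\mathcal{H}} V|_{\mathcal{H}}$ for the isometric dilation $V$, transporting this compression through the identification $\mathcal{H} \cong \textbf{H}$ yields $P = P_{\textbf{H}}(M_z \oplus M_{e^{it}}|_{\overline{\Delta_P(L^2(\mathcal{D}_P))}})|_{\textbf{H}}$, as asserted. I expect the main obstacle to be the third step: running the two Fourier representations simultaneously and verifying that they glue through $\Theta_P$ and $\Delta_P$ to give precisely $\textbf{H}$, in particular that $\{\Theta_P u \oplus \Delta_P u : u \in H^2(\mathcal{D}_P)\}$ is exactly the orthogonal complement of $\mathcal{H}$ inside $\mathcal{K}_+$ and that no singular spectrum survives. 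A more synthetic alternative would be to define the model operator on $\textbf{H}$ outright, verify it is a c.n.u.\ contraction whose characteristic function coincides with $\Theta_P$, and appeal to the completeness of the characteristic function as a unitary invariant; but since that completeness is itself a consequence of the model, I would prefer the constructive route above.
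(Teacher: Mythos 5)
The paper offers no proof of this statement: Theorem~\ref{nagymodel} is imported verbatim from the Sz.-Nagy--Foias monograph \cite{Nagy} and is used as a black box, so there is no in-paper argument to compare yours against. Judged on its own terms, your outline is the standard Nagy--Foias construction and is correct in structure: the computation showing $(U-P)h\perp\mathcal H$ and $\|(U-P)h\|=\|D_Ph\|$, the identification of the wandering subspaces with $\mathcal D_P$ and $\mathcal D_{P^*}$, the reduction $\mathcal K_+\ominus\mathcal H=\bigoplus_{n\ge0}U^n\mathcal W$, the absolute continuity of the minimal unitary dilation (which is exactly where complete non-unitarity enters), and the gluing of the two Fourier representations through $\Theta_P$ and $\Delta_P$ are all the right ingredients in the right order, and you correctly flag that the ``synthetic'' alternative via completeness of the characteristic function would be circular. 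The two places where your sketch is genuinely only a sketch are the ones you yourself identify: (i) the absolute continuity argument is more delicate than ``a singular part yields a unitary reducing subspace of $P$'' --- one must first show the singular subspace of $U$ is orthogonal to both $M(\mathcal W)$ and $M(\mathcal W_*)$ and then trace its intersection with $\mathcal H$; and (ii) verifying that the outgoing subspace is carried precisely onto $\{\Theta_Pu\oplus\Delta_Pu: u\in H^2(\mathcal D_P)\}$, rather than merely into it, requires the explicit computation of the Fourier coefficients of $P_{M(\mathcal W_*)}|_{M(\mathcal W)}$ and of the residual projection, which occupies most of Chapter VI of \cite{Nagy}. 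Neither is a wrong turn; both are substantial omissions that would need to be filled in for a complete proof.
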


The following theorem due to Levan provides an orthogonal decomposition to a c.n.u. contraction in terms of a shift operator and a c.n.i. contraction.

\begin{thm}[{\cite{N:L}, Theorem 1}]\label{thmlv1}
	With respect to a c.n.u. contraction $T$ on $\mathcal{H}$, $\mathcal{H}$ admits the unique orthogonal decomposition
	$$\mathcal{H} = \mathcal{H}_1\oplus\mathcal{H}_2,$$
	where $T|_{\mathcal{H}_1}$ is c.n.u. isometry (i.e. a shift operator), while $T|_{\mathcal{H}_2}$ is c.n.i. contraction.
\end{thm}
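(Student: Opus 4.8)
The plan is to build $\mathcal{H}_1$ by hand as the largest subspace on which $T$ acts isometrically, and then read off $\mathcal{H}_2$ as its orthogonal complement. Concretely, I would set
\[
\mathcal{H}_1 = \{h \in \mathcal{H} : \|T^n h\| = \|h\| \text{ for every } n \ge 0\},
\]
which can also be written as $\ker(I - A)$ for the positive contraction $A = \text{SOT-}\lim_n T^{*n}T^n$ (the limit exists because $T^{*n}T^n$ is a decreasing sequence of positive contractions). Since $\|T^n h\|$ is nonincreasing in $n$, membership in $\mathcal{H}_1$ is equivalent to $\langle A h, h\rangle = \|h\|^2$, hence to $Ah = h$; in particular $\mathcal{H}_1$ is a closed subspace. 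The first routine checks are that $\mathcal{H}_1$ is $T$-invariant (if $\|T^n h\| = \|h\|$ for all $n$ then $\|T^n(Th)\| = \|T^{n+1}h\| = \|h\| = \|Th\|$) and that $T|_{\mathcal{H}_1}$ is an isometry. By the very definition, any $T$-invariant subspace on which $T$ is isometric is contained in $\mathcal{H}_1$, so $\mathcal{H}_1$ is the maximal invariant isometric subspace and is intrinsically attached to $T$.

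Next I would upgrade ``isometry'' to ``shift'' using the hypothesis that $T$ is c.n.u. Applying the Wold decomposition to the isometry $V = T|_{\mathcal{H}_1}$ splits $\mathcal{H}_1 = \mathcal{H}_s \oplus \mathcal{H}_u$, where $\mathcal{H}_u = \bigcap_{k \ge 0} T^k \mathcal{H}_1$ carries the unitary part and $\mathcal{H}_s$ the shift part. The point is that $\mathcal{H}_u$ actually reduces $T$ in the whole of $\mathcal{H}$: for $h \in \mathcal{H}_u$ pick $g \in \mathcal{H}_u$ with $Tg = h$ (possible since $V$ maps $\mathcal{H}_u$ onto itself); then $g \in \mathcal{H}_1$ gives $\|Tg\| = \|g\|$, whence $T^*Tg = g$ and so $T^*h = g \in \mathcal{H}_u$. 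Thus $\mathcal{H}_u$ is invariant for both $T$ and $T^*$ and $T|_{\mathcal{H}_u}$ is unitary; since $T$ is c.n.u., $\mathcal{H}_u = \{0\}$ and $T|_{\mathcal{H}_1}$ is a pure isometry, i.e. a shift. Setting $\mathcal{H}_2 = \mathcal{H} \ominus \mathcal{H}_1$ (a co-invariant subspace) and $T_2 = P_{\mathcal{H}_2} T|_{\mathcal{H}_2}$, I would prove $T_2$ is c.n.i. by a short norm chain: if $\mathcal{L} \subseteq \mathcal{H}_2$ is $T_2$-invariant with $T_2|_{\mathcal{L}}$ isometric, then for $\ell \in \mathcal{L}$ the inequalities $\|\ell\| = \|P_{\mathcal{H}_2}T\ell\| \le \|T\ell\| \le \|\ell\|$ are all equalities, forcing $T\ell \in \mathcal{H}_2$, hence $T\ell = T_2 \ell \in \mathcal{L}$ and $\|T\ell\| = \|\ell\|$; iterating, $\ell \in \mathcal{H}_1 \cap \mathcal{H}_2 = \{0\}$. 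Thus $T_2$ has no nonzero invariant isometric subspace, i.e. it is c.n.i.

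For uniqueness I would lean on the intrinsic description of $\mathcal{H}_1$ as the largest $T$-invariant subspace on which $T$ is isometric: in any orthogonal decomposition of the asserted type the shift summand is an invariant isometric subspace, hence sits inside $\mathcal{H}_1$, and the c.n.i. condition on the complementary part must then force equality. This last implication is where I expect the main difficulty to lie, and it is the delicate heart of the theorem. The naive bookkeeping yields only one inclusion, and one has to argue that a strictly smaller shift summand would leave, inside the complement, a piece on which the compression still behaves isometrically, contradicting complete non-isometry; making this rigorous, and being careful about the distinction between a genuine restriction to an invariant subspace and a compression to a co-invariant one, is the crux. A clean way to organize it is to show that $\mathcal{H}_1$ and $\mathcal{H}_2$ are each determined intrinsically by $T$, so that no freedom remains. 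The remaining items (closedness, invariance, the Wold splitting) are standard; the genuinely load-bearing points are the reduction of the unitary summand via the c.n.u. hypothesis and the uniqueness argument just described.
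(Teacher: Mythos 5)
The paper offers no proof of this statement---it is quoted from Levan's article and then immediately used in the form $P=\begin{pmatrix} P_1 & 0\\ 0 & P_2\end{pmatrix}$ on $\mathcal{H}_1\oplus\mathcal{H}_2$, i.e.\ both summands are required to \emph{reduce} $T$---so I am judging your proposal against the statement and the way it is used. There is a genuine gap: your $\mathcal{H}_1=\{h:\|T^nh\|=\|h\| \ \forall n\}$ is the largest \emph{invariant} subspace on which $T$ is isometric, but it need not be $T^*$-invariant, so what you actually construct is a triangular decomposition $T=\begin{pmatrix} T_1 & X\\ 0 & T_2\end{pmatrix}$ in which $T_2$ is only a compression; the assertion that $T|_{\mathcal{H}_2}$ is a c.n.i.\ \emph{contraction on} $\mathcal{H}_2$ presupposes that $\mathcal{H}_2$ is invariant, hence that $\mathcal{H}_1$ reduces. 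Concretely, on $\mathcal{H}=H^2(\mathbb{D})\oplus\mathbb{C}$ let $T(h,a)=(zh+\alpha a\cdot 1,\,0)$ with $0<\alpha<1$. This is a contraction with $\|T^n(h,a)\|^2=\|h\|^2+\alpha^2|a|^2$ for $n\geq 1$, so your $\mathcal{H}_1=H^2\oplus\{0\}$; it is c.n.u.\ (any reducing subspace on which $T$ is unitary lies in $H^2\oplus\{0\}$, and $T^*$-invariance there forces every Taylor coefficient to vanish); yet $T^*(1,0)=(0,\bar{\alpha})\notin\mathcal{H}_1$, so $\mathcal{H}_1$ does not reduce $T$ and $X\neq 0$. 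The subspace that makes the theorem true is the largest \emph{reducing} subspace on which $T$ is isometric (for this example $\{0\}$, the whole of $T$ being the c.n.i.\ part), and correspondingly ``c.n.i.'' must mean ``no nonzero reducing subspace on which $T$ is an isometry,'' not the invariant-subspace property you verify for the compression $T_2$. Such a maximal reducing subspace exists because the family of reducing subspaces on which $T$ acts isometrically is stable under closed linear spans (on each of them $T^*T=I$, hence also on the closed span of any collection of them).

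With that replacement your two good ideas survive intact. The argument that the unitary summand of the Wold decomposition of $T|_{\mathcal{H}_1}$ reduces $T$ in all of $\mathcal{H}$ (choosing $g$ with $Tg=h$ and $\|Tg\|=\|g\|$, whence $T^*h=T^*Tg=g$) and is therefore annihilated by complete non-unitarity is correct, and it is exactly what upgrades ``isometry'' to ``shift.'' Uniqueness, which you leave open and correctly identify as the sticking point of your set-up, becomes immediate in the reducing formulation: if $\mathcal{H}=\mathcal{M}_1\oplus\mathcal{M}_2$ is another decomposition of the required type, then $\mathcal{M}_1$ is a reducing subspace on which $T$ is isometric, so $\mathcal{M}_1\subseteq\mathcal{H}_1$, and $\mathcal{H}_1\ominus\mathcal{M}_1=\mathcal{H}_1\cap\mathcal{M}_2$ is a reducing subspace of $\mathcal{M}_2$ on which $T|_{\mathcal{M}_2}$ is isometric, hence $\{0\}$ by the c.n.i.\ hypothesis. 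As written, however, your construction produces a generally strictly larger $\mathcal{H}_1$ and only a compression on its complement, so it does not prove the theorem in the block-diagonal form in which the paper states and uses it.
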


Let $P$ be a c.n.u. contraction on $\mathcal{H}$. Then by Theorem \ref{thmlv1}, $\mathcal{H}$ can be decomposed as $\mathcal{H} = \mathcal{H}_1 \oplus\mathcal{H}_2$ and 
$
P
=
\begin{bmatrix}
P_1 & 0\\
0 & P_2
\end{bmatrix}
$	
on $\mathcal{H}_1 \oplus\mathcal{H}_2$, with $P_1$ on $\mathcal{H}_1$ being c.n.u. isometry and $P_2$ on $\mathcal{H}_2$ is c.n.i.. By Theorem \ref{nagymodel}, $\mathcal{H}$ can be identified with $\textbf{H}$.
Since $P_1$ on $\mathcal{H}_1$ is a pure isometry, $P_1$ can be identified with $M_z$ on $H^2(\mathcal{D}_{P_1^*})$. Again since $\mathcal{H}_1$ is identified with $H^2(\mathcal{D}_{P_1^*})$, $H^2(\mathcal{D}_{P_1^*})$ can be embeded in $\textbf{H}$ by an isometry, say $V_{\Delta} : H^2(\mathcal{D}_{P_1^*}) \hookrightarrow \textbf{H} $. Set $\mathcal{N} = \textbf{H} \ominus V_{\Delta}(H^2(\mathcal{D}_{P_1^*}))$. Therefore, $\mathcal{H}$ is identified with $\textbf{H}$ and hence with $H^2(\mathcal{D}_{P_1^*})\oplus\mathcal{N}$. Hence $P_1$ on $\mathcal{H}_1$ is identified with $M_z$ on $H^2(\mathcal{D}_{P_1^*})$ and $P_2$ on $\mathcal{H}_2$ is identified with $P_{\mathcal{N}}(M_{e^{it}})|_{\mathcal{N}}$ on $\mathcal{N}$. This is because by Nagy-Foias model for c.n.u. $P$, $P$ can be identified with $ P_{\textbf{H}}(M_z \oplus M_{e^{it}})|_{\textbf{H}}$, where $M_z$ is defined on $H^2(\mathcal{D}_{P^*})$. Thus, we have the following model theorem for a c.n.u. contraction.
\begin{thm}\label{thm19}
	Let $P$ be a c.n.u. contraction on $\mathcal{H}$ and let 
	$
	P
	=
	\begin{bmatrix}
	P_1 & 0\\
	0 & P_2
	\end{bmatrix}
	$ 
	be the Levan's decomposition of $P$ with respect to $\mathcal{H} = \mathcal{H}_1 \oplus \mathcal{H}_2$. Then $\mathcal{H}_1$ and $\mathcal{H}_2$ can be identified with $H^2(\mathcal{D}_{P_1^*})$ and $\mathcal{N}$ respectively and $P|_{\mathcal{H}_1}$ i.e., $P_1$ is identified with $M_z$ on $H^2(\mathcal{D}_{P_1^*})$ and $P|_{\mathcal{H}_2}$, i.e., $P_2$ is identified with $P_{\mathcal{N}}(M_{e^{it}})|_{\mathcal{N}}$ on $\mathcal{N}$ i.e., with $P_{\mathcal{N}}M_z|_{\mathcal{N}}$, where $M_z$ is the multiplication operator on $L^2(\mathcal{D}_P)$.
\end{thm}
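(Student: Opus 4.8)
The plan is to assemble three facts: Levan's decomposition (Theorem \ref{thmlv1}), the von Neumann--Wold decomposition of a pure isometry, and the Sz.-Nagy--Foias model (Theorem \ref{nagymodel}). Since $P$ is c.n.u., Theorem \ref{thmlv1} gives the orthogonal splitting $\mathcal{H} = \mathcal{H}_1 \oplus \mathcal{H}_2$ reducing $P$, so that $P = P_1 \oplus P_2$ with $P_1 = P|_{\mathcal{H}_1}$ a c.n.u. isometry and $P_2 = P|_{\mathcal{H}_2}$ a c.n.i. contraction; this is exactly the block-diagonal form recorded before the statement.

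Next I would invoke the fact that a completely non-unitary isometry is a unilateral shift. Hence $P_1$ is unitarily equivalent to $M_z$ acting on $H^2(\mathcal{D}_{P_1^*})$, the multiplicity space being the wandering subspace $\mathcal{H}_1 \ominus P_1\mathcal{H}_1 = \ker P_1^*$; since $P_1$ is an isometry we have $D_{P_1}=0$ and this wandering space is naturally identified with $\mathcal{D}_{P_1^*}$. This yields the identification $\mathcal{H}_1 \cong H^2(\mathcal{D}_{P_1^*})$ with $P_1 \cong M_z$.

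Finally I would apply Theorem \ref{nagymodel} to identify $\mathcal{H}$ with $\textbf{H}$ and $P$ with $P_{\textbf{H}}(M_z \oplus M_{e^{it}})|_{\textbf{H}}$, where $M_z$ acts on $H^2(\mathcal{D}_{P^*})$ and $M_{e^{it}}$ on $\overline{\Delta_P(L^2(\mathcal{D}_P))}$. Under this identification $\mathcal{H}_1$ sits inside $\textbf{H}$, producing the isometric embedding $V_{\Delta}: H^2(\mathcal{D}_{P_1^*}) \hookrightarrow \textbf{H}$, and I would set $\mathcal{N} = \textbf{H} \ominus V_{\Delta}(H^2(\mathcal{D}_{P_1^*}))$, which is then the copy of $\mathcal{H}_2$. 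Compressing the model operator to $V_{\Delta}(H^2(\mathcal{D}_{P_1^*}))$ recovers $M_z$ on $H^2(\mathcal{D}_{P_1^*})$ for $P_1$, while its compression to $\mathcal{N}$ gives $P_{\mathcal{N}} M_{e^{it}}|_{\mathcal{N}}$ for $P_2$.

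The only genuine content, and the step I expect to be the main obstacle, is verifying that the Levan splitting and the Nagy--Foias coordinates are compatible: that under $\mathcal{H} \cong \textbf{H}$ the shift summand $\mathcal{H}_1$ lands precisely on the subspace $V_{\Delta}(H^2(\mathcal{D}_{P_1^*}))$ on which the model operator acts as the pure shift $M_z$, and that the complementary $\mathcal{N}$ carries only the residual $M_{e^{it}}$ action, so that the compression of $M_z \oplus M_{e^{it}}$ to $\mathcal{N}$ collapses to $P_{\mathcal{N}} M_{e^{it}}|_{\mathcal{N}}$. This rests on the uniqueness of the maximal shift part of a c.n.u. contraction and on checking that $V_{\Delta}$ intertwines $M_z$ with the model operator.
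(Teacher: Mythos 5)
Your proposal follows essentially the same route as the paper: the paper's argument (given in the paragraph preceding the theorem statement rather than in a formal proof environment) likewise combines Levan's decomposition, the identification of the c.n.u. isometry $P_1$ with the shift $M_z$ on $H^2(\mathcal{D}_{P_1^*})$, and the Sz.-Nagy--Foias identification $\mathcal{H}\cong\textbf{H}$ to define $V_{\Delta}$ and $\mathcal{N}=\textbf{H}\ominus V_{\Delta}(H^2(\mathcal{D}_{P_1^*}))$. The compatibility check you flag at the end (that the compression of the model operator to $\mathcal{N}$ reduces to $P_{\mathcal{N}}M_{e^{it}}|_{\mathcal{N}}$) is in fact passed over silently in the paper, so your version is, if anything, the more careful one.
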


The following theorem provides an Wold-type decomposition of a $\Gamma_n$-isometry.
\begin{thm}[\cite{S:B}, Theorem 4.12]\label{wold}
	Let $(S_1, \dots, S_{n-1},P)$ be a commuting tuple of $n$-operators on a Hilbert space $\mathcal{H}$. Then the following are equivalent.
	\begin{enumerate}
		\item $(S_1, \dots, S_{n-1},P)$ is a $\Gamma_n$-isometry;
		\item there exists an orthogonal decomposition $\mathcal{H}=\mathcal{H}_1 \oplus \mathcal{H}_2$ into common reducing subspaces of $S_1, \dots, S_{n-1},P$ such that $(S_1|_{\mathcal{H}_1}, \dots, S_{n-1}|_{\mathcal{H}_1},P|_{\mathcal{H}_1})$ is a $\Gamma_n$-unitary and $(S_1|_{\mathcal{H}_2},\\ \dots, S_{n-1}|_{\mathcal{H}_2},P|_{\mathcal{H}_2})$ is a pure $\Gamma_n$-isometry.
	\end{enumerate} 
\end{thm}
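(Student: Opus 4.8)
The plan is to reduce everything to the classical Wold decomposition of the isometry $P$ and then verify that the two resulting summands are in fact common reducing subspaces for all of $S_1,\dots,S_{n-1}$. The implication $(2)\Rightarrow(1)$ is the routine direction: a $\Gamma_n$-unitary and a pure $\Gamma_n$-isometry are both $\Gamma_n$-isometries, hence $\Gamma_n$-contractions, so for any rational $r$ with no poles on $\Gamma_n$ one has $r(S_1,\dots,S_{n-1},P)=r(S_1|_{\mathcal{H}_1},\dots,P|_{\mathcal{H}_1})\oplus r(S_1|_{\mathcal{H}_2},\dots,P|_{\mathcal{H}_2})$, whose norm is the maximum of the two summand norms and therefore at most $\|r\|_{\infty,\Gamma_n}$; the Taylor joint spectrum of a reducing direct sum is the union of the two spectra, hence contained in $\Gamma_n$. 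Thus the tuple is a $\Gamma_n$-contraction, and since $P=P|_{\mathcal{H}_1}\oplus P|_{\mathcal{H}_2}$ is a unitary summed with a pure isometry it is an isometry, so Theorem \ref{charofgamuniiso} identifies the tuple as a $\Gamma_n$-isometry.

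For the substantive direction $(1)\Rightarrow(2)$, I would first invoke Theorem \ref{charofgamuniiso} to conclude that $P$ is an isometry and then apply the classical Wold decomposition, writing $\mathcal{H}=\mathcal{H}_1\oplus\mathcal{H}_2$ with $\mathcal{H}_1=\bigcap_{k\geq 0}P^k\mathcal{H}$ the maximal subspace on which $P$ acts unitarily and $\mathcal{H}_2=\bigoplus_{k\geq 0}P^k(\mathcal{H}\ominus P\mathcal{H})$ the subspace on which $P$ is a pure isometry; both reduce $P$. The decisive extra ingredient is that for a $\Gamma_n$-isometry the defect operator $D_P$ vanishes, so the fundamental relation $S_i-S_{n-i}^*P=D_PA_iD_P$ recalled in the introduction collapses to $S_i=S_{n-i}^*P$, equivalently $S_i^*=P^*S_{n-i}$, for every $i$.

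Using this I would show that $\mathcal{H}_1$ reduces each $S_i$. For invariance under $S_i$: if $h\in\mathcal{H}_1$ then for every $k$ we may write $h=P^k g_k$, so $S_i h=P^k S_i g_k\in P^k\mathcal{H}$ by commutativity, whence $S_i h\in\mathcal{H}_1$. For invariance under $S_i^*$: since $S_i^*=P^*S_{n-i}$ and $S_{n-i}\mathcal{H}_1\subseteq\mathcal{H}_1$ by what was just proved, while $P^*$ preserves $\mathcal{H}_1$ because $P|_{\mathcal{H}_1}$ is unitary, we obtain $S_i^* h=P^*S_{n-i}h\in\mathcal{H}_1$. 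Hence $\mathcal{H}_1$, and therefore its orthogonal complement $\mathcal{H}_2$, is a common reducing subspace of $S_1,\dots,S_{n-1},P$. Restricting a $\Gamma_n$-contraction to a joint reducing subspace again yields a $\Gamma_n$-contraction; on $\mathcal{H}_1$ the compression of $P$ is unitary, so Theorem \ref{charofgamuniiso} makes the restricted tuple a $\Gamma_n$-unitary, while on $\mathcal{H}_2$ the compression of $P$ is a pure isometry, so the restricted tuple is a $\Gamma_n$-isometry with pure $P$, i.e.\ a pure $\Gamma_n$-isometry. The main obstacle, and the only point where the special structure of $\Gamma_n$-isometries enters, is exactly the passage to $S_i=S_{n-i}^*P$ and the resulting verification that $S_i^*$ leaves $\mathcal{H}_1$ invariant; everything else is the classical Wold machinery together with the permanence of the spectral-set property under restriction to reducing subspaces.
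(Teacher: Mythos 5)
The paper states this result without proof, importing it as Theorem 4.12 of the cited reference, so there is no in-paper argument to compare against. Your proof is correct and follows the standard route of the original source: reduce to the classical Wold decomposition of the isometry $P$, use $D_P=0$ to collapse the fundamental equation to $S_i=S_{n-i}^*P$, and deduce from this identity (together with commutativity and the unitarity of $P$ on the Wold-unitary part) that the two Wold subspaces reduce every $S_i$; the identification of the restricted tuples then follows from Theorem \ref{charofgamuniiso} and the permanence of the spectral-set property under restriction to reducing subspaces.
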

The next result gives an explicit model for pure $\Gamma_n$-isometry.
\begin{thm}[\cite{S:B}, Theorem 4.10 \& \cite{S:P1}, Theorem 5.6]\label{puremodel}
	Let $(\widetilde{S}_1, \dots, \widetilde{S}_{n-1}, \widetilde{P})$ be a commuting tuple of operators on a Hilbert space $\mathcal{H}$. If $(\widetilde{S}_1, \dots, \widetilde{S}_{n-1}, \widetilde{P})$ is a pure $\Gamma_n$-isometry then there is a unitary operator $U: \mathcal{H} \to H^2(\mathcal{D}_{\widetilde{P}^*})$ such that 
	\[
	\widetilde{S}_i = U^*T_{\varphi_i}U, \quad i = 1, \dots, n-1 \quad \text{and} \quad \widetilde{P}=U^*T_zU.
	\]
	Here each $T_{\varphi_i}$ is the Toepllitz operator on the vectorial Hardy space $H^2(\mathcal{D}_{\widetilde{P}^*})$ with the symbol $\varphi_i(z)= F_i^* + F_{n-i}z$, where $(F_1, \dots, F_{n-1})$ is the $\ft$-tuple of $(\widetilde{S}_1^*, \dots, \widetilde{S}_{n-1}^*,\widetilde{P}^*)$ such that 
	\[
	\left( \dfrac{n-1}{n}(F_1^* + F_{n-1}z), \dfrac{n-2}{n}(F_2^* + F_{n-2}z), \dots, \dfrac{1}{n}(F_{n-1}^*+F_1z) \right)
	\]
	is a $\Gamma_{n-1}$-contraction for every $z \in \overline{\mathbb D}$.\\
	
	Conversely, if $F_1, \dots, F_{n-1}$ are $n-1$ bounded operators on a Hilbert space $E$ such that 
	\[
	\left( \dfrac{n-1}{n}(F_1^* + F_{n-1}z), \dfrac{n-2}{n}(F_2^* + F_{n-2}z), \dots, \dfrac{1}{n}(F_{n-1}^* + F_1z) \right)
	\]
	is a $\Gamma_{n-1}$-contraction for every $z \in \overline{\mathbb D}$, then $(T_{F_1^* + F_{n-1}z}, \dots, T_{F_{n-1}^*+F_1z}, T_z)$ on $H^2(E)$ is a pure $\Gamma_n$-isometry.
\end{thm}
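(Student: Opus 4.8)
The plan is to treat the two implications separately, using the Sz.-Nagy--Foias model of a pure isometry as the backbone. For the forward direction, suppose $(\widetilde S_1,\dots,\widetilde S_{n-1},\widetilde P)$ is a pure $\Gamma_n$-isometry. By Theorem \ref{charofgamuniiso} the operator $\widetilde P$ is a pure isometry, hence unitarily equivalent to the shift $T_z$ on $H^2(\mathcal D_{\widetilde P^*})$ via the canonical unitary $U:\mathcal H\to H^2(\mathcal D_{\widetilde P^*})$ coming from the Wold decomposition. Transporting the whole tuple by $U$, I may assume $\widetilde P=T_z$. Each $\widetilde S_i$ commutes with $T_z$, and since the commutant of the shift on the vectorial Hardy space is exactly the algebra of analytic Toeplitz operators, I get $\widetilde S_i=T_{\Phi_i}$ for some $\Phi_i\in H^\infty(\mathcal B(\mathcal D_{\widetilde P^*}))$.

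The next step is to force each symbol to be affine. Since $\widetilde P$ is an isometry we have $D_{\widetilde P}=0$, so the defining relations of the $\ft$-tuple of $(\widetilde S_1,\dots,\widetilde S_{n-1},\widetilde P)$ collapse to $\widetilde S_i=\widetilde S_{n-i}^{\,*}\widetilde P$ for every $i$. In the model this reads $T_{\Phi_i}=T_{\Phi_{n-i}}^{*}T_z$. Writing $\Phi_i(z)=\sum_{k\ge0}a_{i,k}z^k$ and comparing Fourier coefficients of the two sides on a typical vector $f=\sum_m f_m z^m$, I find that all coefficients of order $\ge2$ vanish and that $a_{i,0}=a_{n-i,1}^{*}$, $a_{i,1}=a_{n-i,0}^{*}$. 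Hence $\Phi_i(z)=F_i^{*}+F_{n-i}z$ after setting $F_i^{*}=a_{i,0}$ and $F_{n-i}=a_{i,1}$; one checks that these assignments are mutually consistent across the indices $i$ and $n-i$. A short verification, using that $D_{\widetilde P^*}$ is precisely the projection onto the space of constants $\mathcal D_{\widetilde P^*}\subset H^2(\mathcal D_{\widetilde P^*})$, shows that $\widetilde S_i^{*}-\widetilde S_{n-i}\widetilde P^{*}=D_{\widetilde P^*}F_iD_{\widetilde P^*}$; by uniqueness of the fundamental operators this identifies $(F_1,\dots,F_{n-1})$ as the $\ft$-tuple of $(\widetilde S_1^{*},\dots,\widetilde S_{n-1}^{*},\widetilde P^{*})$, as required.

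It remains to pin down the $\Gamma_{n-1}$-contraction condition on the rescaled tuple, which I would obtain by a fibrewise analysis. Passing to $L^2(\mathcal D_{\widetilde P^*})$, the tuple $(M_{\varphi_1},\dots,M_{\varphi_{n-1}},M_z)$ is a $\Gamma_n$-unitary extension of the model, and for almost every $z\in\mathbb T$ the fibre $(\varphi_1(z),\dots,\varphi_{n-1}(z),z)$ is a $\Gamma_n$-unitary datum, i.e. its joint spectrum lies in $b\Gamma_n$. Matching $\varphi_i(z)=F_i^{*}+F_{n-i}z$ with the scalar representation $s_i=c_i+p\,c_{n-i}$ recalled in the introduction, together with the normalising weights $\tfrac{n-j}{n}$ that reflect the passage $\Gamma_n\rightsquigarrow\Gamma_{n-1}$, translates this boundary membership into the assertion that $\bigl(\tfrac{n-1}{n}(F_1^{*}+F_{n-1}z),\dots,\tfrac{1}{n}(F_{n-1}^{*}+F_1z)\bigr)$ is a $\Gamma_{n-1}$-contraction for every $z\in\overline{\mathbb D}$; the extension from the circle to the full disc follows from a subharmonicity/maximum-principle argument applied to the affine symbols.

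For the converse I would run this machinery in reverse. Starting from operators $F_1,\dots,F_{n-1}$ on $E$ with the stated $\Gamma_{n-1}$-contraction property, I set $\widetilde P=T_z$ and $\widetilde S_i=T_{F_i^{*}+F_{n-i}z}$ on $H^2(E)$; then $\widetilde P$ is visibly a pure isometry, and commutativity of the tuple reduces to the commutation relations among the $F_i$ and $F_i^{*}$ encoded in each fibre being a commuting $\Gamma_{n-1}$-contraction. The substantial point is to verify that the tuple is a $\Gamma_n$-contraction, and I would do this by exhibiting $(M_{F_1^{*}+F_{n-1}z},\dots,M_{F_{n-1}^{*}+F_1z},M_z)$ on $L^2(E)$ as a $\Gamma_n$-unitary for which $H^2(E)$ is a common invariant subspace, so that the compression is a $\Gamma_n$-isometry by Definition \ref{def3}(ii). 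Establishing the $\Gamma_n$-unitarity --- namely that the multiplication symbols are fibrewise commuting normal operators whose joint spectrum sits in $b\Gamma_n$ --- is the step I expect to be the main obstacle, since it is exactly here that the full force of the operatorial $\Gamma_{n-1}$-contraction hypothesis and its link to the distinguished boundary of $\Gamma_n$ must be used; by comparison, the affine-symbol computation of the first half is routine.
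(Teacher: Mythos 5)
First, a remark on the comparison you asked for: the paper does not prove Theorem \ref{puremodel} at all — it is imported verbatim from \cite{S:B} (Theorem 4.10) and \cite{S:P1} (Theorem 5.6) and used as a black box, so there is no internal proof to measure your attempt against. Judged on its own, your skeleton for the forward direction is the standard and correct one: the Wold decomposition identifies $\widetilde P$ with $T_z$ on $H^2(\mathcal D_{\widetilde P^*})$, the commutant of the vector-valued shift gives $\widetilde S_i=T_{\Phi_i}$ with $\Phi_i\in H^\infty(\mathcal B(\mathcal D_{\widetilde P^*}))$, and the relation $\widetilde S_i=\widetilde S_{n-i}^*\widetilde P$ (which indeed follows from $D_{\widetilde P}=0$ together with the defining equation of the $\ft$-tuple) forces the symbols to be affine with the stated symmetry. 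Your Fourier-coefficient computation and the identification of $(F_1,\dots,F_{n-1})$ as the $\ft$-tuple of $(\widetilde S_1^*,\dots,\widetilde S_{n-1}^*,\widetilde P^*)$ via the projection onto constants are correct.

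The gaps sit exactly where the real content of the theorem lies. In the forward direction you assert that $(M_{\varphi_1},\dots,M_{\varphi_{n-1}},M_z)$ on $L^2(\mathcal D_{\widetilde P^*})$ is a $\Gamma_n$-unitary and that fibrewise membership in $b\Gamma_n$ ``translates into'' the rescaled $\Gamma_{n-1}$-contractivity; neither is proved. Showing the $L^2$-tuple is a $\Gamma_n$-unitary requires proving each $M_{\varphi_i}$ is normal, i.e.\ that the given $\Gamma_n$-unitary extension of the isometry can be cut down to the minimal unitary extension space of $T_z$ — an argument, not an observation. More seriously, the passage from ``the operator tuple $\bigl(F_1^*+F_{n-1}z,\dots,F_{n-1}^*+F_1z,zI\bigr)$ is a $\Gamma_n$-unitary'' to ``$\bigl(\frac{n-1}{n}(F_1^*+F_{n-1}z),\dots,\frac{1}{n}(F_{n-1}^*+F_1z)\bigr)$ is a $\Gamma_{n-1}$-contraction'' is a nontrivial structure theorem for $\Gamma_n$-unitaries (the operator-valued analogue of $s_i=c_i+pc_{n-i}$); it cannot be obtained by ``matching with the scalar representation,'' because the fibres are operators on $\mathcal D_{\widetilde P^*}$, not points of $\Gamma_n$, and the paper's Theorem \ref{charofgamuniiso} does not supply this implication. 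In the converse direction you explicitly leave the decisive step — that $(M_{F_1^*+F_{n-1}z},\dots,M_{F_{n-1}^*+F_1z},M_z)$ on $L^2(E)$ is a $\Gamma_n$-unitary, equivalently that the Toeplitz tuple satisfies the von Neumann inequality over $\Gamma_n$ — as ``the main obstacle''; but that is precisely the half of the theorem carrying all the work (in the sources it is handled via the structure of $\Gamma_n$-unitaries and the dilation theory of the $\Gamma_{n-1}$-contractive pencil). As it stands, the proposal is a correct outline in which the two implications that constitute the substance of the theorem are asserted rather than established.
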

The following lemma is well known and we skip the proof as it is a routine exercise.
\begin{lem}\label{inter}
	Let $U, V$ be a unitary and a pure isometry on Hilbert space $\mathcal{H}_1$ and $\mathcal{H}_2$ respectively, and let $X:\mathcal{H}_1 \to \mathcal{H}_2$ be such that $XU=VX$. Then $X=0$.
\end{lem}

\vspace{0.4cm}

\section{Sz.-Nagy-Foias type dilation and a functional model for a class of c.n.u. $\Gamma_n$-contraction}

\vspace{0.4cm}

\noindent In this Section, we construct an explicit $\Gamma_n$-isometric dilation for a certain class of c.n.u. $\Gamma_n$-contractions $(S_1,\dots , S_{n-1},P)$ on the Sz.-Nagy-Foias minimal isometric dilation space of the c.n.u. contraction $P$. Naturally, such a dilation becomes minimal. As a consequence of this dilation, we obtain a functional model for such c.n.u. $\Gamma_n$-contraction. The $\ft$-tuples are the main ingredients in the construction of such dilation and model. Also, as an application of this model we are able to express each $S_i$ as $ S_i = C_i + PC_{n-i}^* $ for some operator $C_i$. We begin with an elementary lemma which will be used in the proof of the model theorem.

\begin{lem}\label{unitary}
	Let $E_i, K_i$ for $i=1,2$, be Hilbert spaces. Suppose $U$ is a unitary from $(H^2(\mathbb D)\otimes E_1) \oplus K_1$ to $(H^2(\mathbb D)\otimes E_2) \oplus K_2$ such that 
	\[
	(M_z \otimes I_{E_1}) \oplus W_1 = U((M_z \otimes I_{E_2}) \oplus W_2)U^*,
	\]
	where $W_1$ on $K_1$ and $W_2$ on $K_2$ are unitaries. Then $U$ is of the form $(I_{H^2(\mathbb D)}\otimes U_1)\oplus U_2$ for some unitaries $U_1:E_1 \to E_2$ and $U_2:K_1 \to K_2$.
\end{lem}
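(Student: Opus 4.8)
The plan is to exploit the fact that each operator $V_i:=(M_z\otimes I_{E_i})\oplus W_i$ is an isometry already presented in its Wold form: $M_z\otimes I_{E_i}$ is a pure isometry (a unilateral shift of multiplicity $\dim E_i$) on $H^2(\mathbb D)\otimes E_i$, while $W_i$ is a unitary on $K_i$. The hypothesis says $U$ is a unitary intertwining these two isometries, $UV_1=V_2U$ (the well-typed form of the displayed identity), so $U$ must carry the Wold decomposition of $V_1$ onto that of $V_2$. Concretely I would write $U$ as a $2\times2$ operator matrix
\[
U=\begin{bmatrix} A & B \\ C & D \end{bmatrix}
\]
relative to $(H^2(\mathbb D)\otimes E_1)\oplus K_1$ and $(H^2(\mathbb D)\otimes E_2)\oplus K_2$, and read off the intertwining relation entrywise.

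Writing $S_i:=M_z\otimes I_{E_i}$, the relation $UV_1=V_2U$ yields the four equations $AS_1=S_2A$, $BW_1=S_2B$, $CS_1=W_2C$ and $DW_1=W_2D$. The off-diagonal blocks vanish by Lemma \ref{inter}: the equation $BW_1=S_2B$ is exactly the hypothesis of that lemma, with the unitary $W_1$, the pure isometry $S_2$ and $X=B$, so $B=0$. For $C$ I would instead apply the same reasoning to the intertwining $U^*V_2=V_1U^*$ satisfied by the (also unitary) adjoint; comparing its top-right entry gives $C^*W_2=S_1C^*$, which forces $C^*=0$ by Lemma \ref{inter} (now with unitary $W_2$ and pure isometry $S_1$). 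Hence $U=A\oplus D$ is block diagonal, and since $U$ is unitary the diagonal blocks $A:H^2(\mathbb D)\otimes E_1\to H^2(\mathbb D)\otimes E_2$ and $U_2:=D:K_1\to K_2$ are themselves unitary; the equation $DW_1=W_2D$ records that $U_2$ intertwines $W_1$ and $W_2$.

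It remains to show the shift part $A$ has the form $I_{H^2(\mathbb D)}\otimes U_1$. From $AS_1=S_2A$ and the unitarity of $A$, the range of $S_1$ is mapped onto the range of $S_2$; taking orthogonal complements, $A$ carries the space of constant functions, canonically $E_1$, unitarily onto the constants $E_2$. I would then set $U_1:=A|_{E_1}:E_1\to E_2$, a unitary, and verify $A=I_{H^2(\mathbb D)}\otimes U_1$ by computing on monomials: for $e\in E_1$,
\[
A(z^k e)=AS_1^k e = S_2^k Ae = S_2^k(U_1 e)=z^k(U_1 e)=(I_{H^2(\mathbb D)}\otimes U_1)(z^k e),
\]
and since such vectors span a dense subspace, the two bounded operators coincide. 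This gives $U=(I_{H^2(\mathbb D)}\otimes U_1)\oplus U_2$, as required.

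I expect the only genuinely delicate point to be the correct application of Lemma \ref{inter} in both orientations: it is used directly for $B$, but $C$ naturally sits in the ``wrong'' pattern $CS_1=W_2C$, so one must pass to $U^*$ to recover the required form $XW_2=S_1X$ with $X=C^*$. Everything else is routine bookkeeping with the Wold structure and with the action of $A$ on monomials.
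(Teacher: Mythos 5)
Your proof is correct and follows the same route as the paper: write $U$ as a $2\times 2$ block matrix, read off the four intertwining relations, kill the two off-diagonal blocks with Lemma \ref{inter} (once directly, once after passing to the adjoint to get the relation into the form $XW=SX$), and conclude that the remaining diagonal blocks are unitary. The one place you go beyond the paper is the shift block: the paper only observes that \emph{some} unitary of the form $I_{H^2(\mathbb D)}\otimes U_1$ intertwines the two shifts and then takes $U_{11}$ of that form ``without loss of generality,'' whereas your argument --- that a unitary intertwiner must carry the wandering subspace $E_1=(\operatorname{Ran}(M_z\otimes I_{E_1}))^{\perp}$ onto $E_2$ and is then determined on the dense span of monomials $z^k e$ --- actually proves that the \emph{given} $U_{11}$ has this form, which is what the lemma asserts and what its application in Theorem \ref{commutativemodel} requires.
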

\begin{proof}
	Suppose the block matrix form of $U$ is $\begin{pmatrix}
	U_{11} & U_{12}\\
	U_{21} & U_{22}
	\end{pmatrix}$. Since 
	\begin{equation}\label{eqn}
	\left((M_z \otimes I_{E_1})\oplus W_1\right)U = U \left((M_z \otimes I_{E_2}) \oplus W_2\right),
	\end{equation}
	that is,
	\[
	\begin{pmatrix}
	M_z \otimes I_{E_1} & 0\\
	0 & W_1
	\end{pmatrix}\begin{pmatrix}
	U_{11} & U_{12}\\
	U_{21} & U_{22}
	\end{pmatrix}=\begin{pmatrix}
	U_{11} & U_{12}\\
	U_{21} & U_{22}
	\end{pmatrix}\begin{pmatrix}
	M_z \otimes I_{E_2} & 0 \\
	0 & W_2
	\end{pmatrix},
	\]
	so 
	\begingroup
	\allowdisplaybreaks
	\begin{align*}
		(M_z \otimes I_{E_1})U_{11} &= U_{11}(M_z \otimes I_{E_2}) \quad &(M_z \otimes I_{E_1})U_{12}&=U_{12}W_2\\
		W_1U_{21}&=U_{21}(M_z \otimes I_{E_2}) \quad &W_1U_{22} &=U_{22}W_2.
	\end{align*}
	\endgroup
	Since $(M_z\otimes I_{E_1})$ is a pure isometry and $W_2$ is a unitary, by Lemma \ref{inter}, $U_{12}=0$. Thus 
	\[
	U=
	\begin{pmatrix}
		U_{11} & 0\\
		U_{21}& U_{22}
	\end{pmatrix}.
	\]
	Again $U^*$ is a unitary which satisfies the following intertwining relation
	\[
	\begin{pmatrix}
	M_z\otimes I_{E_1} & 0\\
	0 & W_1
	\end{pmatrix}\begin{pmatrix}
	U_{11}^* & U_{21}^*\\
	0 & U_{22}^*
	\end{pmatrix}= \begin{pmatrix}
	U_{11}^* & U_{21}^*\\
	0 & U_{22}^*
	\end{pmatrix}\begin{pmatrix}
	M_z\otimes I_{E_2} & 0\\
	0 & W_2
	\end{pmatrix}.
	\]
	This implies that $(M_z\otimes I_{E_1})U_{21}^*=U_{21}^*W_2$, where $M_z\otimes I_{E_1}$ is a pure isometry and $W_2$ is a unitary. Again applying Lemma \ref{inter}, we have that $U_{21}=0$. Hence $U=\begin{pmatrix}
	U_{11} & 0\\
	0 & U_{22}
	\end{pmatrix}.$ From \eqref{eqn} we get $(M_z\otimes I_{E_1})U_{11}=U_{11}(M_z\otimes I_{E_2})$, where $U_{11}$ is a unitary from $H^2(\mathbb D)\otimes E_1$ to $H^2(\mathbb D)\otimes E_2$. It is noticeable that for any unitary $U_1: E_1\to E_2$, $I_{H^2(\mathbb D)}\otimes U_1$ is a unitary from $H^2(\mathbb D)\otimes E_1$ to $H^2(\mathbb D)\otimes E_2$ that intertwines $M_z\otimes I_{E_1}$ and $M_z\otimes I_{E_2}$. Therefore, without loss of generality we can choose $U_{11}= I_{H^2(\mathbb D)}\otimes U_1$. Set $U_2=U_{22}$. Thus $U$ is of the form $(I_{H^2(\mathbb D)}\otimes U_1)\oplus U_2$.
	
\end{proof}

For a contraction $P$ on $\mathcal{H}$, consider the following Hilbert spaces: 
\begingroup
\allowdisplaybreaks
\begin{align}
	&\textbf{K}_+=H^2(\mathcal{D}_{P^*})\oplus \overline{\Delta_P(L^2(\mathcal{D}_P))};\label{eqK}\\
	&\textbf{H}=\textbf{K}_+\ominus \{\Theta_Pu \oplus \Delta_Pu: u \in H^2(\mathcal{D}_P)\}, \label{eqH}
\end{align}
\endgroup
where $\Theta_P,\Delta_P$ are as in \eqref{char} and \eqref{delta} respectively. The following theorem provides a minimal isometric dilation and model for a c.n.u. contraction.

\begin{thm}[\textbf{Sz.-Nagy and Foias}, \cite{Nagy}]
	Let $P$ be a c.n.u. contraction on $\mathcal{H}$. Then $(M_z \oplus M_{e^{it}}|_{\overline{\Delta_P(L^2(\mathcal{D}_P))}})(=\textbf{V})$ is the minimal isometric dilation of $P$ on $\textbf{K}_+$. The space $\mathcal{H}$ can be identified with $\textbf{H}$ and $P$ is unitarily equivalent to $P_{\textbf{H}}(M_z\oplus M_{e^{it}}|_{\overline{\Delta_P(L^2(\mathcal{D}_P))}})|_{\textbf{H}}$.
\end{thm}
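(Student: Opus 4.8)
The plan is to reduce everything to the construction of a single isometric embedding $\Phi:\mathcal{H}\to\textbf{K}_+$ whose range is exactly $\textbf{H}$ and which intertwines $P^*$ with $\textbf{V}^*$. First I would record the easy structural facts about $\textbf{V}$. The operator $M_z$ on $H^2(\mathcal{D}_{P^*})$ is the unilateral shift, hence a pure isometry, while $\overline{\Delta_P(L^2(\mathcal{D}_P))}$ is a reducing subspace for multiplication by $e^{it}$ on $L^2(\mathcal{D}_P)$ (because $\Delta_P$ is itself a multiplication operator and therefore commutes with $M_{e^{it}}$), so $M_{e^{it}}|_{\overline{\Delta_P(L^2(\mathcal{D}_P))}}$ is unitary. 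Thus $\textbf{V}$ is an isometry whose Wold decomposition has shift part $M_z$ and unitary part $M_{e^{it}}|_{\overline{\Delta_P(L^2(\mathcal{D}_P))}}$. Next I would check that $\mathcal{M}:=\{\Theta_Pu\oplus\Delta_Pu:u\in H^2(\mathcal{D}_P)\}$ is $\textbf{V}$-invariant: since $\Theta_P$ and $\Delta_P$ commute with multiplication by the independent variable, $\textbf{V}(\Theta_Pu\oplus\Delta_Pu)=\Theta_P(zu)\oplus\Delta_P(zu)\in\mathcal{M}$. Consequently $\textbf{H}=\textbf{K}_+\ominus\mathcal{M}$ is invariant under $\textbf{V}^*$, and for $T:=P_{\textbf{H}}\textbf{V}|_{\textbf{H}}$ one gets $T^{*n}=\textbf{V}^{*n}|_{\textbf{H}}$ and hence the dilation identity $T^n=P_{\textbf{H}}\textbf{V}^n|_{\textbf{H}}$ automatically. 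So once $T\cong P$ and minimality are established, both assertions of the theorem follow.

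The core step is the embedding. I would define $\Phi=\Phi_1\oplus\Phi_2$ with $\Phi_1 h=D_{P^*}(I-zP^*)^{-1}h\in H^2(\mathcal{D}_{P^*})$ and $\Phi_2 h$ taken from the residual part of the minimal \emph{unitary} dilation of $P$, realised on $\overline{\Delta_P(L^2(\mathcal{D}_P))}$. Expanding $\Phi_1 h=\sum_{n\geq0}z^nD_{P^*}P^{*n}h$ and telescoping $\|D_{P^*}P^{*n}h\|^2=\|P^{*n}h\|^2-\|P^{*(n+1)}h\|^2$ gives $\|\Phi_1 h\|^2=\|h\|^2-\lim_n\|P^{*n}h\|^2$, while the defining property of the residual part gives $\|\Phi_2 h\|^2=\lim_n\|P^{*n}h\|^2$, so $\Phi$ is isometric. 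The backward-shift computation $M_z^*\Phi_1 h=\Phi_1(P^*h)$, together with the corresponding relation on the residual part, yields $\textbf{V}^*\Phi=\Phi P^*$; this both shows $\operatorname{ran}\Phi$ is $\textbf{V}^*$-invariant and, once surjectivity onto $\textbf{H}$ is known, identifies $T^*$ with $P^*$. To place $\operatorname{ran}\Phi$ inside $\textbf{H}$ and then fill up $\textbf{H}$, I would use the fundamental defect identities for the characteristic function, namely that $\Theta_P$ is analytic and contractive on $\mathbb{D}$ with $I_{\mathcal{D}_P}-\Theta_P(e^{it})^*\Theta_P(e^{it})=\Delta_P(t)^2$ a.e.; these make $\mathcal{M}$ closed and force $\langle\Phi h,\Theta_Pu\oplus\Delta_Pu\rangle=0$ for every $u\in H^2(\mathcal{D}_P)$.

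Minimality, $\textbf{K}_+=\bigvee_{n\geq0}\textbf{V}^n\textbf{H}$, is where the completely non-unitary hypothesis on $P$ is indispensable. I would argue that any reducing subspace of $\textbf{V}$ orthogonal to $\textbf{H}$ would, pulled back through $\Phi$ and the unitary dilation, produce a nonzero reducing subspace on which $P$ acts unitarily, contradicting c.n.u.; equivalently, minimality of the isometric dilation is inherited from minimality of the underlying unitary dilation once the residual part has been correctly identified.

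I expect the main obstacle to be the construction and analysis of the residual component $\Phi_2$ and the proof that $\Phi$ maps $\mathcal{H}$ \emph{onto} $\textbf{H}$, not merely into it. This is precisely the point where the minimal unitary dilation of $P$, the identification of its residual part with multiplication by $e^{it}$ on $\overline{\Delta_P(L^2(\mathcal{D}_P))}$, and the boundary defect identity for $\Theta_P$ must all be combined; verifying the isometry property is a telescoping exercise, but surjectivity and the correct bookkeeping between $H^2(\mathcal{D}_{P^*})$, the residual space, and the subtracted graph $\mathcal{M}$ is the delicate part. The c.n.u. assumption enters exactly to kill the unitary summand that would otherwise obstruct both surjectivity and minimality.
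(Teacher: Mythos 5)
The paper does not prove this statement at all --- it is quoted as a classical result of Sz.-Nagy and Foias with a citation to \cite{Nagy} --- so there is no internal argument to compare yours against; judged on its own, your sketch is a correct outline of the standard proof from that source: the embedding $h\mapsto\sum_{n\ge 0}z^nD_{P^*}P^{*n}h$ together with the residual component, the telescoping norm identity, the intertwining $\textbf{V}^*\Phi=\Phi P^*$, and the identification of $\textbf{H}$ as the orthocomplement of $\{\Theta_Pu\oplus\Delta_Pu\}$ are exactly the right ingredients, and you correctly isolate surjectivity of $\Phi$ onto $\textbf{H}$ as the delicate step. One small calibration: complete non-unitarity is what makes the residual part of the dilation coincide with $\overline{\Delta_P(L^2(\mathcal{D}_P))}$ and hence makes $\Phi$ surjective; minimality of $\textbf{V}$ over $\textbf{H}$ follows already from $\mathcal{K}_+=\mathcal{H}\oplus M(L)$ with $L=\overline{(\textbf{V}-P)\mathcal{H}}$ and does not itself need the c.n.u.\ hypothesis.
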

The space $\textbf{K}_+$ can be identified with $\widetilde{\textbf{K}_+}=(H^2(\mathbb D)\otimes \mathcal{D}_{P^*})\oplus \overline{\Delta_P(L^2(\mathcal{D}_P))}$ by the natural unitary $U \oplus I_{\overline{\Delta_P(L^2(\mathcal{D}_P))}}$, where the unitary $U: H^2(\mathcal{D}_{P^*})\to H^2(\mathbb D)\otimes \mathcal{D}_{P^*}$ defined by 
\[
U(z^nh) = z^n \otimes h, \qquad (n \in \mathbb N \cup \{0\}, h \in \mathcal{D}_{P^*}).
\]
Set $\widetilde{\textbf{H}}= (U \oplus I_{\overline{\Delta_P(L^2(\mathcal{D}_P))}})\textbf{H}$ and $\widetilde{\textbf{V}}=(U \oplus I_{\overline{\Delta_P(L^2(\mathcal{D}_P))}})\textbf{V}(U \oplus I_{\overline{\Delta_P(L^2(\mathcal{D}_P))}})^{-1}$.
Then $\widetilde{\textbf{V}}$ on $\widetilde{\textbf{K}_+}$ is the minimal isometric dilation of $P$.
Before going to present the main result of this section let us recall Schaffer-type $\Gamma_n$-isometric dilation of a $\Gamma_n$-contraction from \cite{sourav14}. This is a variant of Theorem 6.3 in \cite{sourav14}.
\begin{thm} \label{spdilation}
	Let $(S_1, \dots, S_{n-1},P)$ be a $\Gamma_n$-contraction defined on a Hilbert space $\mathcal{H}$ with $(A_1, \dots, A_{n-1})$ and $(B_1, \dots,B_{n-1})$ being the $\ft$-tuples of $(S_1, \dots, S_{n-1},P)$ and $(S_1^*, \dots, S_{n-1}^*,P^*)$ respectively. Let $\mathcal{K}= \mathcal{H} \oplus \mathcal{D}_P \oplus \mathcal{D}_P \oplus \dots $ and let $(R_1, \dots, R_{n-1},V)$ be defined on $\mathcal{K}$ by 
	\[
	R_i = \begin{pmatrix}
	S_i & 0 & 0 & 0 & \cdots\\
	A_{n-i}^*D_P & A_i & 0 & 0 & \cdots\\
	0 & A_{n-i}^* & A_i & 0 & \cdots\\
	0 & 0 & A_{n-i}^* & A_i & \cdots\\
	\vdots & \vdots & \vdots & \vdots & \ddots
	\end{pmatrix}, V = \begin{pmatrix}
	P & 0 & 0 & 0 & \cdots\\
	D_P & 0 & 0 & 0 & \cdots\\
	0 & I & 0 & 0 & \cdots\\
	0 & 0 & I & 0 & \cdots\\
	\vdots & \vdots & \vdots & \vdots & \ddots
	\end{pmatrix}.
	\]
	Suppose 
	\[
	\Sigma_1(z)=\left( \dfrac{n-1}{n}(A_1 + A_{n-1}^*z), \dfrac{n-2}{n}(A_{2} + A_{n-2}^*z), \dots, \dfrac{1}{n}(A_{n-1} + A_1^*z) \right)
	\]
	and 
	\[
	\Sigma_2(z)=\left( \dfrac{n-1}{n}(B_1^* + B_{n-1}z), \dfrac{n-2}{n}(B_{2}^* + B_{n-2}z), \dots, \dfrac{1}{n}(B_{n-1}^* + B_1z) \right)
	\]
	If  \,$\Sigma_1(z)$ and $\Sigma_2(z)$ are $\Gamma_{n-1}$-contractions for every $z\in \T$, then $(R_1, \dots, R_{n-1},V)$ is a minimal $\Gamma_n$-isometric dilation of $(S_1, \dots, S_{n-1},P)$. Moreover, $(R_1^*, \dots , R_{n-1}^*,V^*)$ is a $\Gamma_n$-co-isometric extension of $(S_1^*, \dots , S_{n-1}^*,P^*)$. 
\end{thm}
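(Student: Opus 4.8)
The plan is to establish, one at a time, that $V$ is an isometry, that $(R_1, \dots, R_{n-1}, V)$ is a commuting tuple, that it is a $\Gamma_n$-isometry, and that the dilation it furnishes is minimal; the $\Gamma_n$-co-isometric extension statement then drops out by taking adjoints. Three of these are the soft parts. The operator $V$ is exactly the Sz.-Nagy--Foias (Schaffer) minimal isometric dilation of the contraction $P$, so $V^*V = I$ is immediate from $P^*P + D_P^2 = I$, and $\mathcal{K} = \overline{\mathrm{span}}\{V^m h : m \geq 0,\ h \in \mathcal{H}\}$ is the usual minimality of that dilation, which also forces minimality of the tuple dilation. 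Each $R_i$ and $V$ is block lower-triangular for $\mathcal{K} = \mathcal{H} \oplus (\mathcal{D}_P \oplus \mathcal{D}_P \oplus \cdots)$, so $\mathcal{H}$ is co-invariant; once commutativity is known this yields $P_{\mathcal{H}}\, R_1^{m_1}\cdots R_{n-1}^{m_{n-1}} V^{m_n}|_{\mathcal{H}} = S_1^{m_1}\cdots S_{n-1}^{m_{n-1}} P^{m_n}$, i.e. the dilation property, while the same triangularity makes $\mathcal{H}$ invariant for $(R_1^*, \dots, R_{n-1}^*, V^*)$ with $R_i^*|_{\mathcal{H}} = S_i^*$ and $V^*|_{\mathcal{H}} = P^*$, giving the co-isometric extension as soon as the $\Gamma_n$-isometry property is in hand.

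The first substantial step is commutativity. I would expand the block products $R_iR_j - R_jR_i$ and $R_iV - VR_i$ entrywise on the Schaffer space and reduce every required identity to relations living on $\mathcal{H}$ and $\mathcal{D}_P$ alone. The $(1,1)$ entries demand only $S_iS_j = S_jS_i$ and $S_iP = PS_i$, which hold by hypothesis; the $(2,1)$ entry of $R_iV = VR_i$ reduces precisely to the fundamental equation $D_PS_i = A_iD_P + A_{n-i}^*D_PP$, and the $(2,1)$ entry of $R_iR_j = R_jR_i$ reduces, after substituting this equation, to the commutation relations $A_{n-i}^*A_{n-j}^* = A_{n-j}^*A_{n-i}^*$ and $A_{n-i}^*A_j - A_jA_{n-i}^* = A_{n-j}^*A_i - A_iA_{n-j}^*$ among the entries of the $\ft$-tuple. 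All of these are the structural identities for the fundamental operators proved in \cite{S:P2}, so commutativity is a matter of careful but routine block bookkeeping built on those identities.

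For the $\Gamma_n$-isometry I would first record the relation $R_i = R_{n-i}^*V$, which is a one-line consequence of the defining equation $S_i = S_{n-i}^*P + D_PA_iD_P$ together with the triangular pattern. Next I would identify the compression of $(R_1, \dots, R_{n-1}, V)$ to the tail $\mathcal{D}_P \oplus \mathcal{D}_P \oplus \cdots \cong H^2(\mathcal{D}_P)$: there $V$ acts as the shift $T_z$ and each $R_i$ as the analytic Toeplitz operator $T_{A_i + A_{n-i}^*z}$, so by the converse half of Theorem \ref{puremodel} this tail tuple is a pure $\Gamma_n$-isometry, the relevant pencil being exactly $\Sigma_1(z)$. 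The hypothesis gives this only for $z \in \T$, so I would promote it to all $z \in \overline{\mathbb D}$ by a maximum-principle argument, the pencil being affine in $z$. Finally, since $V$ is an isometry, by Theorem \ref{charofgamuniiso} it suffices to show that $(R_1, \dots, R_{n-1}, V)$ is a $\Gamma_n$-contraction; I would obtain this by gluing the pure tail to the corner $\Gamma_n$-contraction $(S_1, \dots, S_{n-1}, P)$, guided by the Wold-type description of Theorem \ref{wold}, with the second hypothesis --- that $\Sigma_2(z)$ is a $\Gamma_{n-1}$-contraction on $\T$, where $\Sigma_2$ is built from the $\ft$-tuple of the adjoint $(S_1^*, \dots, S_{n-1}^*, P^*)$ --- certifying the compatibility of the coupling $A_{n-i}^*D_P$ with the isometry $V$.

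I expect the principal obstacle to be this last gluing together with the commutativity computation: both require propagating the fundamental-operator identities through an infinite block operator while tracking which hypothesis controls which part of the structure --- $\Sigma_1$ the pure tail through Theorem \ref{puremodel}, and $\Sigma_2$ the adjoint corner. The subtle point is that the pencils $\Sigma_1$ and $\Sigma_2$ are built from the $\ft$-tuples of $(S_1, \dots, S_{n-1}, P)$ and of its adjoint, not of the dilation itself, so one must translate the contractivity of these two pencils into the single statement that $(R_1, \dots, R_{n-1}, V)$ is a commuting $\Gamma_n$-contraction; carrying out that translation cleanly is the crux of the proof.
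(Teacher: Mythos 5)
You should first note that the paper does not actually prove this statement: it is recorded as ``a variant of Theorem 6.3 in \cite{sourav14}'' and imported without proof, so there is no in-paper argument to compare against line by line. Judged on its own merits, your outline gets the routine parts right: $V$ is the Schaffer minimal isometric dilation of $P$, the block lower-triangularity gives co-invariance of $\mathcal H$ and hence both the dilation property and $(R_1^*,\dots,R_{n-1}^*,V^*)|_{\mathcal H}=(S_1^*,\dots,S_{n-1}^*,P^*)$, the identity $R_i=R_{n-i}^*V$ does follow from $S_i=S_{n-i}^*P+D_PA_iD_P$ and the triangular pattern, and the tail compression really is $(T_{A_1+A_{n-1}^*z},\dots,T_{A_{n-1}+A_1^*z},T_z)$ on $H^2(\mathcal D_P)$, to which the converse half of Theorem \ref{puremodel} applies after a maximum-principle upgrade from $\T$ to $\overline{\mathbb D}$ (that upgrade is legitimate because $\Gamma_{n-1}$ is polynomially convex and $z\mapsto p(\Sigma_1(z))$ is operator-valued analytic).

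There are, however, two genuine gaps. First, the commutation relations $[A_i,A_j]=0$ and $A_iA_{n-j}^*+A_{n-i}^*A_j=A_jA_{n-i}^*+A_{n-j}^*A_i$ that you need for $R_iR_j=R_jR_i$ are \emph{not} ``structural identities for the fundamental operators proved in \cite{S:P2}'': they fail for general $\Gamma_n$-contractions (if they always held, the construction would dilate every $\Gamma_n$-contraction, contradicting the known failure of rational dilation for $n\ge 3$). They must instead be extracted from the hypothesis itself --- a $\Gamma_{n-1}$-contraction is in particular a commuting tuple, so requiring $\Sigma_1(z)$ to commute for all $z\in\T$ and comparing coefficients of $1,z,z^2$ yields exactly these relations. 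This is repairable but as written the justification is wrong. Second, and more seriously, the final step is not an argument. Theorem \ref{wold} decomposes a tuple \emph{already known} to be a $\Gamma_n$-isometry, so it cannot be used to ``glue'' the corner to the tail and certify that $(R_1,\dots,R_{n-1},V)$ is one; likewise, reducing via Theorem \ref{charofgamuniiso} to showing the block tuple is a $\Gamma_n$-contraction leaves you with precisely the von Neumann inequality you have no tool to verify. The route actually available is the algebraic characterization of $\Gamma_n$-isometries from \cite{S:B}: having established commutativity, $V^*V=I$ and $R_i=R_{n-i}^*V$, what remains is to prove that $\bigl(\tfrac{n-1}{n}R_1,\dots,\tfrac{1}{n}R_{n-1}\bigr)$ is a $\Gamma_{n-1}$-contraction on all of $\mathcal K$, and that is where the hypothesis on $\Sigma_2$ (built from the $\ft$-tuple of the adjoint) must do concrete work. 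Your proposal acknowledges this as ``the crux'' but never supplies the mechanism, so the central claim of the theorem is left unproved.
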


As a consequence of the previous dilation theorem we obtain the following sufficient condition under which a commuting tuple $(S_1, \dots, S_{n-1},P)$ to become a $\Gamma_n$-contraction.
\begin{thm}\label{gammasufficient}
	Let $\Sigma=(S_1, \dots, S_{n-1},P)$ be a commuting tuple of operators on $\mathcal{H}$ such that there are unique operators $A_1, \dots , A_{n-1}$ in $\mathcal B(\mathcal D_P)$ satisfying $S_i-S_{n-i}^*P = D_PA_iD_P$, for $1\leq i \leq n-1$. Then $\Sigma$ is a $\Gamma_n$-contraction if 
	\begin{enumerate}
		\item $$\Big(\dfrac{n-1}{n}(A_1+A_{n-1}^*z), \dfrac{n-2}{n}(A_2+A_{n-2}^*z), \dots, \dfrac{1}{n}(A_{n-1}+A_1^*z)\Big)$$ and $$\Big(\dfrac{n-1}{n}(B_1^*+B_{n-1}z),\dfrac{n-2}{n}(B_2^*+B_{n-2}z, \dots, \dfrac{1}{n}(B_{n-1}^*+B_1z))\Big)$$ are $\Gamma_{n-1}$-contractions for all $z \in \T$;
		\item $\left(\dfrac{n-1}{n}S_1, \dots, \dfrac{1}{n}S_{n-1}\right)$ is a $\Gamma_{n-1}$-contraction.
	\end{enumerate}
\end{thm}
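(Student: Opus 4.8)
The plan is to realize $\Sigma$ as the compression of an explicit $\Gamma_n$-isometry to a co-invariant subspace, and then let the spectral-set property descend. The essential observation is that the Schaffer-type tuple $(R_1, \dots, R_{n-1}, V)$ of Theorem \ref{spdilation} can be \emph{written down} from the data available here --- a contraction $P$ together with the unique $A_1, \dots, A_{n-1} \in \mathcal B(\mathcal D_P)$ solving $S_i - S_{n-i}^* P = D_P A_i D_P$ --- with no prior knowledge that $\Sigma$ is a $\Gamma_n$-contraction. First I would fix this tuple on $\mathcal K = \mathcal H \oplus \mathcal D_P \oplus \mathcal D_P \oplus \cdots$ and record the facts that are visible from the matrices: $V$ is the minimal isometric dilation of $P$, so it is an isometry; the subspace $\mathcal K \ominus \mathcal H = \mathcal D_P \oplus \mathcal D_P \oplus \cdots$ is invariant under every $R_i$ and under $V$, so $\mathcal H$ is co-invariant; and the $(1,1)$ corners are $P_{\mathcal H} R_i|_{\mathcal H} = S_i$ and $P_{\mathcal H} V|_{\mathcal H} = P$, equivalently $(R_1^*, \dots, R_{n-1}^*, V^*)$ is an extension of $(S_1^*, \dots, S_{n-1}^*, P^*)$.

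The heart of the matter is to prove that $(R_1, \dots, R_{n-1}, V)$ is a genuine $\Gamma_n$-isometry using only hypotheses (1) and (2). Since $V$ is an isometry, by Theorem \ref{charofgamuniiso} this is the same as checking the structural relations of a $\Gamma_n$-isometry. The relation $R_i = R_{n-i}^* V$ for each $i$ is forced blockwise by the fundamental equation $S_i - S_{n-i}^* P = D_P A_i D_P$ and the Toeplitz pattern of the matrices. What remains is the $\Gamma_{n-1}$-contractivity clause, namely that $\big(\tfrac{n-1}{n} R_1, \dots, \tfrac1n R_{n-1}\big)$ is a $\Gamma_{n-1}$-contraction; this is exactly where (1) and (2) are engineered to enter. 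The compressed tuple $\big(\tfrac{n-1}{n} R_1, \dots, \tfrac1n R_{n-1}\big)$ is again block lower-triangular, with $\mathcal H$-corner $\big(\tfrac{n-1}{n} S_1, \dots, \tfrac1n S_{n-1}\big)$ --- a $\Gamma_{n-1}$-contraction by (2) --- and with shift part unitarily equivalent to the Toeplitz tuple $(T_{A_1 + A_{n-1}^* z}, \dots, T_{A_{n-1} + A_1^* z})$ on $H^2(\mathcal D_P)$. By the $\Gamma_{n-1}$-version of Theorem \ref{puremodel}, the first clause of (1), that $\Sigma_1(z)$ is a $\Gamma_{n-1}$-contraction for $z \in \T$, makes this shift part a pure $\Gamma_{n-1}$-isometry, while the second clause of (1), involving $\Sigma_2(z)$ and the $\ft$-tuple $(B_1, \dots, B_{n-1})$ of $(S_1^*, \dots, S_{n-1}^*, P^*)$, controls the residual part that survives because $P$ need not be c.n.u.; together with Theorem \ref{wold} these assemble $\big(\tfrac{n-1}{n} R_1, \dots, \tfrac1n R_{n-1}\big)$ into a $\Gamma_{n-1}$-contraction. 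In short, one re-runs the engine of Theorem \ref{spdilation} with its standing $\Gamma_n$-contraction hypothesis replaced throughout by (1) and (2).

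Granting that $(R_1, \dots, R_{n-1}, V)$ is a $\Gamma_n$-isometry, it is a $\Gamma_n$-contraction, so $\Gamma_n$ is a spectral set for it. Because $\mathcal H$ is co-invariant, each $R_i$ and $V$ is lower-triangular in $\mathcal H \oplus (\mathcal K \ominus \mathcal H)$ with $(1,1)$ corner $S_i$, $P$; hence the compression is multiplicative and $r(S_1, \dots, S_{n-1}, P) = P_{\mathcal H}\, r(R_1, \dots, R_{n-1}, V)\big|_{\mathcal H}$ for every rational $r$ whose denominator has no zero on $\Gamma_n$ (here $\sigma_T(R_1, \dots, R_{n-1}, V) \subseteq \Gamma_n$ makes $q(R_1, \dots, R_{n-1}, V)$ invertible). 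Taking norms gives
\[
\|r(S_1, \dots, S_{n-1}, P)\| \le \|r(R_1, \dots, R_{n-1}, V)\| \le \|r\|_{\infty, \Gamma_n},
\]
which is the required von Neumann inequality. The Taylor-spectrum inclusion $\sigma_T(S_1, \dots, S_{n-1}, P) \subseteq \Gamma_n$ then follows from the polynomial case of this inequality together with the polynomial convexity of $\Gamma_n$, and $\Sigma$ is a $\Gamma_n$-contraction.

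I expect the only real difficulty to lie in the second step: certifying $(R_1, \dots, R_{n-1}, V)$ as a $\Gamma_n$-isometry from (1) and (2) alone, without any circular appeal to Theorem \ref{spdilation}, whose statement already assumes $\Sigma$ to be a $\Gamma_n$-contraction. Concretely, one must verify the mutual commutativity of $R_1, \dots, R_{n-1}, V$ and the relations $R_i = R_{n-i}^* V$ directly from the fundamental-operator equation and the commutativity of $(S_1, \dots, S_{n-1}, P)$, and then certify the $\Gamma_{n-1}$-contractivity clause by gluing the pure and residual parts via Theorems \ref{wold} and \ref{puremodel}; keeping every block computation free of the hypothesis we are trying to establish is the delicate point.
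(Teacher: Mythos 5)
Your proposal follows essentially the same route as the paper: the paper's entire proof consists of forming the Schaffer-type tuple $(R_1,\dots,R_{n-1},V)$ of Theorem \ref{spdilation}, asserting it is a $\Gamma_n$-isometry whose adjoint tuple extends $(S_1^*,\dots,S_{n-1}^*,P^*)$, and concluding that $\Sigma$ is the restriction of a $\Gamma_n$-co-isometry to an invariant subspace. If anything you are more scrupulous than the paper, which cites Theorem \ref{spdilation} verbatim even though that theorem's standing hypothesis is that $\Sigma$ is already a $\Gamma_n$-contraction --- precisely the circularity you flag and propose to resolve by re-running the block computations from hypotheses (1) and (2) alone.
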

\begin{proof}
	Let $R_1, \dots,R_{n-1},V$ be as in Theorem \ref{spdilation}. If $(S_1, \dots, S_{n-1},P)$ satisfies the given hypotheses, then by Theorem \ref{spdilation}, $(R_1, \dots,R_{n-1},V)$ on $\mathcal{K}$ is a $\Gamma_n$-isometry such that $(R_1^*, \dots,R_{n-1}^*,V^*)$ extends $(S_1^*, \dots, S_{n-1}^*,P^*)$. Then $(S_1^*,\dots, S_{n-1}^*,P^*)$ is a $\Gamma_n$-contraction by being the restriction of a $\Gamma_n$-co-isometry to a joint-invariant subspace. And hence $(S_1, \dots, S_{n-1},P)$ is a $\Gamma_n$-contraction.
\end{proof}

Consider $(S_1, \dots, S_{n-1},P)$ is a c.n.u. $\Gamma_n$-contraction on $\mathcal{H}$ which satisfies the hypothesis of Theorem \ref{spdilation}, that is, $\Sigma_1(z)$ and $\Sigma_2(z)$ are $\Gamma_{n-1}$-contractions for every $z\in \mathbb D$. Then by Theorem \ref{spdilation}, $(R_1, \dots, R_{n-1},V)$ on $\mathcal{K}$ is the minimal $\Gamma_n$-isometric dilation of $(S_1, \dots, S_{n-1},P)$. Again by Theorem \ref{wold}, there exists an orthogonal decomposition $\mathcal{K}=\mathcal{K}_1 \oplus \mathcal{K}_2$ such that $\mathcal{K}_1$ and $\mathcal{K}_2$ reduce each $R_i$ and $V$ and $(R_{11}, \dots, R_{1(n-1)}, V_1)=(R_1|_{\mathcal{K}_1}, \dots, R_{n-1}|_{\mathcal{K}_1}, V|_{\mathcal{K}_1})$ is a pure $\Gamma_n$-isometry on $\mathcal{K}_1$ while $(R_{21}, \dots, R_{2(n-1)}, V_2)=(R_1|_{\mathcal{K}_2}, \dots, R_{n-1}|_{\mathcal{K}_2}, V|_{\mathcal{K}_2})$ is a $\Gamma_n$-unitary. Using Theorem \ref{spdilation} and the above fact we have the following dilation and commutative model for a c.n.u. $\Gamma_n$-contraction $(S_1,\dots, S_{n-1},P)$ with $\Sigma_1(z)$ and $\Sigma_2(z)$ are $\Gamma_{n-1}$-contractions for every $z\in \mathbb D$.

\begin{thm}\label{commutativemodel}
	Let $(S_1, \dots, S_{n-1},P)$ be a c.n.u. $\Gamma_n$-contraction on a Hilbert space $\mathcal{H}$ with $(A_1, \dots, A_{n-1})$ and $(B_1, \dots, B_{n-1})$ being the $\ft$-tuples of $(S_1, \dots, S_{n-1},P)$ and $(S_1^*, \dots, S_{n-1}^*,P^*)$ respectively. Let $\Sigma_1(z)$ and $\Sigma_2(z)$ be $\Gamma_{n-1}$-contractions for all $z \in \mathbb D$. Suppose $(G_1, \dots, G_{n-1})$ is the $\ft$-tuple of $(R_{11}^*, \dots, R_{1(n-1)}^*,V_1^*)$. Then $\big(\widetilde{S}_{11}\oplus \widetilde{S}_{12}, \dots, \widetilde{S}_{(n-1)1}\oplus \widetilde{S}_{(n-1)2}, \widetilde{P}_1\oplus \widetilde{P}_2\big)$ on $\widetilde{\textbf{K}_+}$ is the minimal $\Gamma_n$-isometric dilation of $(S_1, \dots, S_{n-1},P)$, where 
	\[
	\widetilde{S}_{i1}=I_{H^2(\mathbb D)}\otimes U_1^*G_i^*U_1 + M_z \otimes U_1^*G_{n-i}U_1,\, \widetilde{P}_1= M_z \otimes I_{\mathcal{D}_{P^*}} \;\, \text{on }H^2(\mathbb D)\otimes \mathcal{D}_{P^*}
	\]
	and 
	\[
	\widetilde{S}_{i2}=U_2^*R_{2i}U_2,\;\; \widetilde{P}_2=M_{e^{it}}|_{\overline{\Delta_P(L^2(\mathcal{D}_P))}} \quad \text{on }\overline{\Delta_P(L^2(\mathcal{D}_P))},
	\]
	for some unitaries $U_1: \mathcal{D}_{P^*}\to \mathcal{D}_{V^*}$ and $U_2: \overline{\Delta_P(L^2(\mathcal{D}_P))} \to \mathcal{K}_2$. Moreover, for $i= 1, \dots, n-1$
	\[
	S_i = P_{\mathcal{H}}\big(\widetilde{S}_{i1}\oplus \widetilde{S}_{i2}\big)|_{\mathcal{H}} \text{  and }P = P_{\mathcal{H}}\big((M_z\otimes I_{\mathcal{D}_{P^*}})\oplus M_{e^{it}}|_{\overline{\Delta_P(L^2(\mathcal{D}_P))}}\big)|_{\mathcal{H}}.
	\]
\end{thm}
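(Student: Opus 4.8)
The plan is to transport the Schaffer-type minimal $\Gamma_n$-isometric dilation furnished by Theorem \ref{spdilation} onto the Sz.-Nagy--Foias dilation space $\widetilde{\textbf{K}_+}$ and then read off the transported operators explicitly. First I would invoke Theorem \ref{spdilation}: since $\Sigma_1(z)$ and $\Sigma_2(z)$ are $\Gamma_{n-1}$-contractions for every $z\in\mathbb D$, the tuple $(R_1,\dots,R_{n-1},V)$ on $\mathcal K$ is a minimal $\Gamma_n$-isometric dilation of $(S_1,\dots,S_{n-1},P)$, and its last component $V$ is precisely the Schaffer minimal isometric dilation of the contraction $P$ on $\mathcal K=\mathcal H\oplus\mathcal D_P\oplus\cdots$. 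Applying the Wold-type decomposition of Theorem \ref{wold} splits $\mathcal K=\mathcal K_1\oplus\mathcal K_2$ into common reducing subspaces on which the tuple acts as a pure $\Gamma_n$-isometry $(R_{11},\dots,R_{1(n-1)},V_1)$ and as a $\Gamma_n$-unitary $(R_{21},\dots,R_{2(n-1)},V_2)$ respectively; in particular $V_1$ is a pure isometry (a shift) and $V_2$ is unitary, so this coincides with the classical Wold decomposition of the isometry $V$.

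Next I would model the pure part via Theorem \ref{puremodel}. Since $(G_1,\dots,G_{n-1})$ is the $\ft$-tuple of $(R_{11}^*,\dots,R_{1(n-1)}^*,V_1^*)$, there is a unitary $W:\mathcal K_1\to H^2(\mathcal D_{V_1^*})$ carrying $V_1$ to $M_z$ and $R_{1i}$ to the analytic Toeplitz operator with symbol $G_i^*+G_{n-i}z$. Because $V_2$ is unitary one has $\mathcal D_{V^*}=\mathcal D_{V_1^*}$, and under the canonical identification $H^2(\mathcal D_{V^*})\cong H^2(\mathbb D)\otimes\mathcal D_{V^*}$ this Toeplitz operator becomes $I_{H^2(\mathbb D)}\otimes G_i^*+M_z\otimes G_{n-i}$ while $V_1$ becomes $M_z\otimes I_{\mathcal D_{V^*}}$. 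Thus after this identification $\mathcal K=(H^2(\mathbb D)\otimes\mathcal D_{V^*})\oplus\mathcal K_2$ with $V=(M_z\otimes I_{\mathcal D_{V^*}})\oplus V_2$.

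On the model side the Sz.-Nagy--Foias theorem gives the minimal isometric dilation $\widetilde{\textbf V}=(M_z\otimes I_{\mathcal D_{P^*}})\oplus M_{e^{it}}|_{\overline{\Delta_P(L^2(\mathcal D_P))}}$ on $\widetilde{\textbf K_+}=(H^2(\mathbb D)\otimes\mathcal D_{P^*})\oplus\overline{\Delta_P(L^2(\mathcal D_P))}$, whose own Wold decomposition is exactly this displayed splitting of a shift summand and a unitary summand. Since minimal isometric dilations of $P$ are unique up to a unitary fixing $\mathcal H$, there is a unitary $\Phi:\mathcal K\to\widetilde{\textbf K_+}$ with $\Phi|_{\mathcal H}=I_{\mathcal H}$ and $\Phi V=\widetilde{\textbf V}\Phi$. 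The crucial step, and the main obstacle, is to show that $\Phi$ respects both summands and is tensorial on the shift part: this is precisely the content of Lemma \ref{unitary}, applied with $E_1=\mathcal D_{V^*}$, $E_2=\mathcal D_{P^*}$, $K_1=\mathcal K_2$, $K_2=\overline{\Delta_P(L^2(\mathcal D_P))}$, $W_1=V_2$ and $W_2=M_{e^{it}}|_{\overline{\Delta_P(L^2(\mathcal D_P))}}$. It forces $\Phi=(I_{H^2(\mathbb D)}\otimes U_0)\oplus U_2'$ for unitaries $U_0:\mathcal D_{V^*}\to\mathcal D_{P^*}$ and $U_2':\mathcal K_2\to\overline{\Delta_P(L^2(\mathcal D_P))}$, and in particular $U_0$ identifies the shift multiplicities.

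Finally I would transport and compute. Setting $U_1=U_0^*:\mathcal D_{P^*}\to\mathcal D_{V^*}$ and $U_2=(U_2')^*$, the reducing decomposition $R_i=R_{1i}\oplus R_{2i}$ together with $\Phi=(I_{H^2(\mathbb D)}\otimes U_0)\oplus U_2'$ gives $\Phi R_i\Phi^*=\widetilde S_{i1}\oplus\widetilde S_{i2}$, where conjugating $I_{H^2(\mathbb D)}\otimes G_i^*+M_z\otimes G_{n-i}$ by $I_{H^2(\mathbb D)}\otimes U_0$ yields $\widetilde S_{i1}=I_{H^2(\mathbb D)}\otimes U_1^*G_i^*U_1+M_z\otimes U_1^*G_{n-i}U_1$, and on the unitary summand $\widetilde S_{i2}=U_2^*R_{2i}U_2$ while $\Phi V_2\Phi^*=M_{e^{it}}|_{\overline{\Delta_P(L^2(\mathcal D_P))}}$. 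Hence $\big(\widetilde S_{11}\oplus\widetilde S_{12},\dots,\widetilde P_1\oplus\widetilde P_2\big)=\Phi(R_1,\dots,R_{n-1},V)\Phi^*$ is unitarily equivalent to the Schaffer dilation, so it is a $\Gamma_n$-isometry, and its minimality is inherited since $\Phi$ is a unitary equivalence fixing $\mathcal H$ and $\widetilde{\textbf V}$ is the minimal isometric dilation of $P$. The compression formulas $S_i=P_{\mathcal H}\big(\widetilde S_{i1}\oplus\widetilde S_{i2}\big)|_{\mathcal H}$ and $P=P_{\mathcal H}\widetilde{\textbf V}|_{\mathcal H}$ then follow from the dilation relations $S_i=P_{\mathcal H}R_i|_{\mathcal H}$ and $P=P_{\mathcal H}V|_{\mathcal H}$ of Theorem \ref{spdilation}, after identifying $\mathcal H$ with $\widetilde{\textbf H}=\Phi\mathcal H$.
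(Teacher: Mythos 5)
Your proposal is correct and follows essentially the same route as the paper: Schaffer dilation via Theorem \ref{spdilation}, Wold decomposition, the Toeplitz model of Theorem \ref{puremodel} for the pure part, uniqueness of the minimal isometric dilation of $P$ to connect with $\widetilde{\textbf{K}_+}$, and Lemma \ref{unitary} to force the intertwining unitary into the block-diagonal tensorial form before transporting the operators. The only cosmetic difference is that you orient the intertwining unitary from $\mathcal K$ to $\widetilde{\textbf{K}_+}$ and record explicitly that it fixes $\mathcal H$, whereas the paper takes it in the opposite direction; the substance is identical.
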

\begin{proof}
	Let $(R_1, \dots, R_{n-1},V)$ on $\mathcal{K}$ be a minimal $\Gamma_n$-isometric dilation of the $\Gamma_n$-contraction $(S_1, \dots, S_{n-1},P)$ as in Theorem \ref{spdilation}. As stated before of this theorem, $(R_{11}, \dots, R_{1(n-1)}, V_1)$ on $\mathcal{K}_1$ is a pure $\Gamma_n$-isometry while $(R_{21}, \dots, R_{2(n-1)}, V_2)$ on $\mathcal{K}_2$ is a $\Gamma_n$-unitary on $\mathcal{K}_2$. Then by Theorem \ref{puremodel}, $(R_{11}, \dots, R_{1(n-1)}, V_1)$ on $\mathcal{K}_1$ and $\big(I_{H^2(\mathcal{D})}\otimes G_1^* + M_z \otimes G_{n-1},\dots, I_{H^2(\mathbb D)}\otimes G_{n-1}^* + M_z \otimes G_1, M_z \otimes I_{\mathcal{D}_{V_1^*}}\big)$ on $H^2(\mathbb D)\otimes \mathcal{D}_{V_1^*}$ $(=H ^2(\mathbb D)\otimes \mathcal{D}_{V^*})$ are unitarily equivalent by a unitary say $\varphi: \mathcal{K}_1 \to H^2(\mathbb D)\otimes \mathcal{D}_{V^*}$. Let 
	\[
	{K}'= (H^2(\mathbb D)\otimes \mathcal{D}_{V^*}) \oplus \mathcal{K}_2.
	\]
	Then $\mathcal{K}$ and $K'$ are unitarily equivalent by the unitary $\varphi \oplus I_{\mathcal{K}_2}$. Hence $\big((I_{H^2(\mathcal{D})}\otimes G_1^* + M_z \otimes G_{n-1})\oplus R_{21},\dots, ( I_{H^2(\mathbb D)}\otimes G_{n-1}^* + M_z \otimes G_1)\oplus R_{2(n-1)},  (M_z \otimes I_{\mathcal{D}_{V_1^*}})\oplus V_2\big)$ on $K'$ is the minimal $\Gamma_n$-isometric dilation of $(S_1, \dots, S_{n-1},P)$. Therefore, $(M_z \otimes I_{\mathcal{D}_{V_1^*}})\oplus V_2$ on $K'$ is the minimal isometric dilation of $P$ and hence $(M_z \otimes I_{\mathcal{D}_{V_1^*}})\oplus V_2$ on $K'$ and $(M_z\otimes I_{\mathcal{D}_{P^*}})\oplus M_{e^{it}}|_{\overline{\Delta_P(L^2(\mathcal{D}_P))}}$ on $\widetilde{\textbf{K}_+}$ are unitarily equivalent. Let $U:\widetilde{\textbf{K}_+}\to K'$ be a unitary such that 
	\begin{equation}\label{eqU}
	(M_z \otimes I_{\mathcal{D}_{V_1^*}})\oplus V_2 = U[(M_z\otimes I_{\mathcal{D}_{P^*}})\oplus M_{e^{it}}|_{\overline{\Delta_P(L^2(\mathcal{D}_P))}}]U^*.
	\end{equation}
	Then by Lemma \ref{unitary}, $U$ is of the form $(I_{H^2(\mathbb D)}\otimes U_1) \oplus U_2$ for some unitaries $U_1: \mathcal{D}_{P^*}\to \mathcal{D}_{V^*}$ and $U_2:\overline{\Delta_P(L^2(\mathcal{D}_P))} \to \mathcal{K}_2$. Consider for each $i=1, \dots, n-1$
	\begingroup
	\allowdisplaybreaks
	\begin{align*}
	\widetilde{S}_{i1}&= (I_{H^2(\mathbb D)}\otimes U_1)^*(I_{H^2(\mathbb D)}\otimes G_i^*\oplus M_z \otimes G_{n-i})(I_{H^2(\mathbb D)}\otimes U_1)\\
	& \qquad\qquad\qquad\qquad\qquad\text{ and }\\
	\widetilde{P}_1&=(I_{H^2(\mathbb D)}\otimes U_1)^*(M_z \otimes I_{\mathcal{D}_{V_1^*}})(I_{H^2(\mathbb D)}\otimes U_1).
	\end{align*}
	\endgroup
	Again from Equation \eqref{eqU}, we have $M_{e^{it}}|_{\overline{\Delta_P(L^2(\mathcal{D}_P))}}=U_2^*V_2U_2$ and $\widetilde{P}_1=(I_{H^2(\mathbb D)}\otimes U_1)^*(M_z \otimes I_{\mathcal{D}_{V_1^*}})(I_{H^2(\mathbb D)}\otimes U_1) = M_z \otimes I_{\mathcal{D}_{P^*}}$. Since $\big((I_{H^2(\mathcal{D})}\otimes G_1^* + M_z \otimes G_{n-1}),\dots, (I_{H^2(\mathbb D)}\otimes G_{n-1}^* + M_z \otimes G_{1}),  M_z \otimes I_{\mathcal{D}_{V_1^*}}\big)$ on $H ^2(\mathbb D)\otimes \mathcal{D}_{V^*}$ is a pure $\Gamma_n$-isometry and $(\widetilde{S}_{11}, \dots, \widetilde{S}_{(n-1)1}, \widetilde{P}_1)$ on $H^2(\mathbb D)\otimes \mathcal{D}_{P^*}$ is unitarily equivalent to $\big((I_{H^2(\mathcal{D})}\otimes G_1^* + M_z \otimes G_{n-1}),\dots, (I_{H^2(\mathbb D)}\otimes G_{n-1}^* + M_z \otimes G_{1}), M_z \otimes I_{\mathcal{D}_{V_1^*}}\big)$, so $(\widetilde{S}_{11}, \dots, \widetilde{S}_{(n-1)1}, \widetilde{P}_1)$ is a pure $\Gamma_n$-isometry. Again by the hypothesis we have that
	\[
	\widetilde{S}_{i2} = U_2^*R_{2i}U_2, \quad \widetilde{P}_2 = U_2^*V_2U_2=M_{e^{it}}|_{\overline{\Delta_P(L^2(\mathcal{D}_P))}}.
	\]
	Therefore,  $(\widetilde{S}_{12}, \dots, \widetilde{S}_{(n-1)2}, \widetilde{P}_2)$ on $\overline{\Delta_P(L^2(\mathcal{D}_P))}$ is unitarily equivalent to $(R_{21}, \dots, R_{(n-1)2}, V_2)$ on $\mathcal{K}_2$ and thus $(\widetilde{S}_{12}, \dots, \widetilde{S}_{(n-1)2}, \widetilde{P}_2)$ is a $\Gamma_n$-unitary. Thus $(\widetilde{S}_{11}\oplus \widetilde{S}_{12}, \dots, \widetilde{S}_{(n-1)1}\oplus \widetilde{S}_{(n-1)2}, \widetilde{P}_1\oplus \widetilde{P}_2)$ on $\widetilde{\textbf{K}_+}$ is unitarily equivalent to $(R_1, \dots, R_{n-1},V)$ on $\mathcal{K}$ and hence $(\widetilde{S}_{11}\oplus \widetilde{S}_{12}, \dots, \widetilde{S}_{(n-1)1}\oplus \widetilde{S}_{(n-1)2}, \widetilde{P}_1\oplus \widetilde{P}_2)$ is the minimal $\Gamma_n$-isometric dilation of $(S_1, \dots, S_{n-1},P)$ as $(R_1, \dots, R_{n-1},V)$ on $\mathcal{K}$ is the minimal $\Gamma_n$-isometric dilation of $(S_1, \dots, S_{n-1},P)$. Thus $\mathcal{H}$ can be considered as a subspace of $\widetilde{\textbf{K}_+}$ and we have 
	\[
	S_i = P_{\mathcal{H}}\big(\widetilde{S}_{i1}\oplus \widetilde{S}_{i2}\big)|_{\mathcal{H}} \text{  and  }P= P_{\mathcal{H}}\big((M_z \otimes I_{\mathcal{D}_{P^*}})\oplus M_{e^{it}}|_{\overline{\Delta_P(L^2(\mathcal{D}_P))}}\big)|_{\mathcal{H}}.
	\]
\end{proof}

A direct consequence of the previous theorem is the following.

\begin{cor}\label{bisaipaldilation}
	Let $(S_1, \dots, S_{n-1},P)$ be a c.n.u. $\Gamma_n$-contraction on a Hilbert space $\mathcal{H}$ with $(A_1, \dots, A_{n-1})$ and $(B_1, \dots, B_{n-1})$ being the $\mathcal{F}_O$-tuples of $(S_1, \dots, S_{n-1},P)$ and $(S_1^*, \dots, S_{n-1}^*,P^*)$ respectively. Let $\Sigma_1(z)$ and $\Sigma_2(z)$ be $\Gamma_{n-1}$-contractions for all $z\in \mathbb D$. Then there is an isometry $\varphi_{BS}$ from $\mathcal{H}$ into $H^2(\mathcal{D}_{P^*})\oplus\overline{\Delta_P L^2(\mathcal{D}_P)}$ such that for $i=1, \dots, n-1$
	\[
	\varphi_{BS}S_i^*=\left(M_{F_i^*+zF_{n-i}}\oplus\widetilde{S}_{i2}\right)^*\varphi_{BS} \text{ and }\varphi_{BS}P^*=\left(M_z\oplus M_{e^{it}}|_{\overline{\Delta_P L^2(\mathcal{D}_P)}}\right)^*\varphi_{BS},
	\]
	where $(M_{F_1^*+zF_{n-1}}, \dots, M_{F_{n-1}^*+zF_1},M_z)$ on $H^2(\mathcal{D}_{P^*})$ is a pure $\Gamma_n$-isometry and $(\widetilde{S}_{12},\dots, \widetilde{S}_{(n-1)2},\\M_{e^{it}}|_{\overline{\Delta_P L^2(\mathcal{D}_P)}})$ on $\overline{\Delta_P L^2(\mathcal{D}_P)}$ is a $\Gamma_n$-unitary.
\end{cor}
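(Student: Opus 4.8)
The plan is to read the corollary off directly from the model of Theorem~\ref{commutativemodel}, by passing to adjoints, taking $\varphi_{BS}$ to be the natural embedding of $\mathcal{H}$ into the dilation space, and identifying the tensor-product Hardy space $H^2(\mathbb D)\otimes\mathcal{D}_{P^*}$ with the vectorial Hardy space $H^2(\mathcal{D}_{P^*})$. Recall from Theorem~\ref{commutativemodel} that $\big(\widetilde{S}_{11}\oplus\widetilde{S}_{12},\dots,\widetilde{S}_{(n-1)1}\oplus\widetilde{S}_{(n-1)2},\widetilde{P}_1\oplus\widetilde{P}_2\big)$ on $\widetilde{\textbf{K}_+}$ is the minimal $\Gamma_n$-isometric dilation of $(S_1,\dots,S_{n-1},P)$, together with the compression relations $S_i=P_{\mathcal{H}}(\widetilde{S}_{i1}\oplus\widetilde{S}_{i2})|_{\mathcal{H}}$ and $P=P_{\mathcal{H}}(\widetilde{P}_1\oplus\widetilde{P}_2)|_{\mathcal{H}}$.

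First I would record that the embedded copy of $\mathcal{H}$ is co-invariant under the dilating tuple, that is, invariant under each $(\widetilde{S}_{i1}\oplus\widetilde{S}_{i2})^*$ and under $(\widetilde{P}_1\oplus\widetilde{P}_2)^*$. This is intrinsic to the construction: in the Schaffer form of Theorem~\ref{spdilation} one reads off directly that $R_i^*|_{\mathcal{H}}=S_i^*$ and $V^*|_{\mathcal{H}}=P^*$, and this co-invariance is preserved by the unitary equivalences used to pass from $(R_1,\dots,R_{n-1},V)$ on $\mathcal{K}$ to the model on $\widetilde{\textbf{K}_+}$. Consequently, taking $\varphi_{BS}$ to be the inclusion $\mathcal{H}\hookrightarrow\widetilde{\textbf{K}_+}$ (an isometry), co-invariance converts the compression relations into the adjoint intertwining relations $\varphi_{BS}S_i^*=(\widetilde{S}_{i1}\oplus\widetilde{S}_{i2})^*\varphi_{BS}$ and $\varphi_{BS}P^*=(\widetilde{P}_1\oplus\widetilde{P}_2)^*\varphi_{BS}$.

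Next I would translate the first summand into Toeplitz form. Using the natural unitary $U:H^2(\mathcal{D}_{P^*})\to H^2(\mathbb D)\otimes\mathcal{D}_{P^*}$ and setting $F_i:=U_1^*G_iU_1$ on $\mathcal{D}_{P^*}$ (so that $F_i^*=U_1^*G_i^*U_1$), the operator $\widetilde{S}_{i1}=I_{H^2(\mathbb D)}\otimes F_i^*+M_z\otimes F_{n-i}$ corresponds under $U$ to the analytic Toeplitz operator $M_{F_i^*+zF_{n-i}}$ on $H^2(\mathcal{D}_{P^*})$, while $\widetilde{P}_1=M_z\otimes I_{\mathcal{D}_{P^*}}$ corresponds to $M_z$. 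By Theorem~\ref{commutativemodel} (together with Theorem~\ref{puremodel}) the resulting tuple $(M_{F_1^*+zF_{n-1}},\dots,M_{F_{n-1}^*+zF_1},M_z)$ on $H^2(\mathcal{D}_{P^*})$ is a pure $\Gamma_n$-isometry, and the second summand $(\widetilde{S}_{12},\dots,\widetilde{S}_{(n-1)2},M_{e^{it}}|_{\overline{\Delta_P L^2(\mathcal{D}_P)}})$ is already the $\Gamma_n$-unitary part. Composing the inclusion with the identification $U^{-1}\oplus I$ then yields an isometry $\varphi_{BS}$ of $\mathcal{H}$ into $H^2(\mathcal{D}_{P^*})\oplus\overline{\Delta_P L^2(\mathcal{D}_P)}$ satisfying the asserted relations.

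The content here is essentially bookkeeping rather than a new idea, so the two points to get right are: (i) establishing the co-invariance of $\mathcal{H}$, so that the compression relations can be turned into clean adjoint intertwinings; and (ii) correctly matching the tensor-product expression $I_{H^2(\mathbb D)}\otimes F_i^*+M_z\otimes F_{n-i}$ with the multiplication operator $M_{F_i^*+zF_{n-i}}$ under the identification $U$, with $F_i$ defined via the unitary $U_1$ furnished by Theorem~\ref{commutativemodel}. I expect no genuine obstacle beyond the careful tracking of adjoints and of these two unitary identifications.
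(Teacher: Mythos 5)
Your proposal is correct and follows essentially the same route as the paper: both obtain $\varphi_{BS}$ from the minimality of the dilation in Theorem~\ref{commutativemodel} (which, via Proposition~\ref{spcoextn}, gives co-invariance of $\mathcal{H}$ and hence the adjoint intertwining relations), and both set $F_i=U_1^*G_iU_1$ to rewrite $\widetilde{S}_{i1}$ as the multiplication operator $M_{F_i^*+zF_{n-i}}$ under the identification of $H^2(\mathbb D)\otimes\mathcal{D}_{P^*}$ with $H^2(\mathcal{D}_{P^*})$. If anything, your explicit justification of the co-invariance via the Schaffer form is slightly more careful than the paper's appeal to the ``definition'' of a $\Gamma_n$-isometric dilation.
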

\begin{proof}
	By Theorem \ref{commutativemodel}, $\big(\widetilde{S}_{11}\oplus \widetilde{S}_{12}, \dots, \widetilde{S}_{(n-1)1}\oplus \widetilde{S}_{(n-1)2}, \widetilde{P}_1\oplus \widetilde{P}_2\big)$ on $\widetilde{\textbf{K}_+}$ is the minimal $\Gamma_n$-isometric dilation of $(S_1, \dots, S_{n-1},P)$. Then by definition of a $\Gamma_n$-isometric dilation, there exists an isometry $\varphi_{BS}:\mathcal{H} \to \widetilde{\textbf{K}_+}$ such that 
	\[
	\varphi_{BS}P^*=\left(\widetilde{P}_1\oplus \widetilde{P}_2 \right)^*\varphi_{BS} \text{ and }\varphi_{BS}S_i^*=\left(\widetilde{S}_{i1}\oplus \widetilde{S}_{i2}\right)^*\varphi_{BS}, \text{ for all }i=1, \dots, n-1.
	\]
	Again by Theorem \ref{commutativemodel}, $\widetilde{S}_{i1}=I_{H^2(\mathbb D)}\otimes U_1^*G_i^*U_1 + M_z \otimes U_1^*G_{n-i}U_1$ and $\widetilde{P}_1= M_z\otimes I_{\mathcal{D}_{P^*}}$. Suppose $F_i=U_1^*G_iU_1$, for $i=1, \dots, n-1$. Invoking Theorem \ref{commutativemodel} again, $$\left(\widetilde{S}_{11}, \dots, \widetilde{S}_{(n-1)1}, \widetilde{P}_1\right)=\left(M_{F_1^*+zF_{n-1}}, \dots, M_{F_{n-1}^*+zF_1},M_z\right)$$
	on $H^2(\mathbb{D})\otimes \mathcal{D}_{P^*}$ is a pure $\Gamma_n$-isometry and $\left(\widetilde{S}_{12}, \dots, \widetilde{S}_{(n-1)2}, M_{e^{it}}|_{\overline{\Delta_P L^2(\mathcal{D}_P)}}\right)$ on $\overline{\Delta_P L^2(\mathcal{D}_P)}$ is a $\Gamma_n$-unitary.
\end{proof}

As a corollary of Theorem \ref{commutativemodel}, we have the following representation for such c.n.u. $\Gamma_n$-contraction.

\begin{thm}\label{representation}
	Let $(S_1, \dots, S_{n-1},P)$ be a c.n.u. $\Gamma_n$-contraction on a Hilbert space $\mathcal{H}$ with $(A_1, \dots, A_{n-1})$ and $(B_1,\dots, B_{n-1})$ being the $\ft$-tuples of $(S_1, \dots, S_{n-1},P)$ and $(S_1^*, \dots, S_{n-1}^*,P^*)$ respectively. Let $\Sigma_1(z)$ and $\Sigma_2(z)$ be $\Gamma_{n-1}$-contractions for all $z \in \mathbb D$. Consider $C_i=P_{\mathcal{H}}\big((I_{H^2(\mathbb D)}\\\otimes U_1^*G_iU_1) \oplus \dfrac{\widetilde{S}_{i2}}{2}\big)|_{\mathcal{H}}$ for $i= 1, \dots, n-1$, where $U_1, G_i, \widetilde{S}_{i2}$ are in Theorem \ref{commutativemodel}. Then $S_i = C_i + PC_{n-i}^*$. 
\end{thm}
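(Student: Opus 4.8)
The plan is to read the identity $S_i=C_i+PC_{n-i}^*$ off the minimal $\Gamma_n$-isometric dilation produced in Theorem \ref{commutativemodel}, by compressing a single \emph{exact} operator identity on the dilation space $\widetilde{\textbf{K}_+}$ down to $\mathcal H$. Write $\widehat C_i:=\big(I_{H^2(\mathbb D)}\otimes U_1^*G_iU_1\big)\oplus\frac{\widetilde S_{i2}}{2}$ and $\widetilde S_i:=\widetilde S_{i1}\oplus\widetilde S_{i2}$, $\widetilde P:=\widetilde P_1\oplus\widetilde P_2$, so that $C_i=P_{\mathcal H}\widehat C_i|_{\mathcal H}$ and $S_i=P_{\mathcal H}\widetilde S_i|_{\mathcal H}$. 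The first thing I would record is that the dilation of Theorem \ref{spdilation} is such that $(R_1^*,\dots,R_{n-1}^*,V^*)$ extends $(S_1^*,\dots,S_{n-1}^*,P^*)$; hence, after the unitary identifications of Theorem \ref{commutativemodel}, $\mathcal H$ is co-invariant for each $\widetilde S_i$ and for $\widetilde P$, and in particular $\mathcal H^\perp$ is invariant under $\widetilde P$.

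The decisive reduction uses this semi-invariance. For $h\in\mathcal H$ one has $C_{n-i}^*h=P_{\mathcal H}\widehat C_{n-i}^*h\in\mathcal H$, and since $\widetilde P\,\mathcal H^\perp\subseteq\mathcal H^\perp$ we get $P_{\mathcal H}\widetilde P(I-P_{\mathcal H})=0$ on the relevant range, whence $PC_{n-i}^*=P_{\mathcal H}\widetilde P\widehat C_{n-i}^*|_{\mathcal H}$. Therefore $C_i+PC_{n-i}^*=P_{\mathcal H}\big(\widehat C_i+\widetilde P\widehat C_{n-i}^*\big)|_{\mathcal H}$, and it suffices to prove the exact operator identity $\widehat C_i+\widetilde P\widehat C_{n-i}^*=\widetilde S_i$ on $\widetilde{\textbf{K}_+}$, which then compresses to $S_i=P_{\mathcal H}\widetilde S_i|_{\mathcal H}$.

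Every operator in sight is block diagonal for $\widetilde{\textbf{K}_+}=\big(H^2(\mathbb D)\otimes\mathcal D_{P^*}\big)\oplus\overline{\Delta_P(L^2(\mathcal D_P))}$, so I would verify the identity on each summand. On the pure $\Gamma_n$-isometry summand I would use the Toeplitz description $\widetilde S_{i1}=I_{H^2(\mathbb D)}\otimes U_1^*G_i^*U_1+M_z\otimes U_1^*G_{n-i}U_1$, i.e.\ the analytic symbol $U_1^*G_i^*U_1+z\,U_1^*G_{n-i}U_1$, together with $\widetilde P_1=M_z\otimes I$, which carries the degree-zero coefficient of $\widehat C_{n-i}^*$ into the degree-one slot; matching the constant and linear coefficients then gives $\widehat C_i^{(1)}+\widetilde P_1(\widehat C_{n-i}^{(1)})^*=\widetilde S_{i1}$, provided the constant coefficient of $\widehat C_i$ is read as $U_1^*G_i^*U_1$. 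On the $\Gamma_n$-unitary summand $(\widetilde S_{12},\dots,\widetilde S_{(n-1)2},\widetilde P_2)$ I would invoke the defining relation of a $\Gamma_n$-unitary: as $\widetilde P_2$ is unitary its defect vanishes, so $\widetilde S_{i2}=\widetilde S_{(n-i)2}^*\widetilde P_2$, and since the tuple consists of commuting normal operators, Fuglede--Putnam upgrades this to $\widetilde S_{i2}=\widetilde P_2\widetilde S_{(n-i)2}^*$. The factor $\tfrac12$ now does its work: $\frac{\widetilde S_{i2}}{2}+\widetilde P_2\frac{\widetilde S_{(n-i)2}^*}{2}=\frac{\widetilde S_{i2}}{2}+\frac{\widetilde S_{i2}}{2}=\widetilde S_{i2}$.

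The step I expect to be most delicate is the bookkeeping of adjoints on the pure summand: one must check that the degree-zero coefficient contributed by $\widehat C_i$ is exactly the coefficient $U_1^*G_i^*U_1$ occurring in $\widetilde S_{i1}$, rather than its adjoint, because an unnoticed transposition there would leave a residual analytic Toeplitz operator with symbol $(U_1^*G_iU_1-U_1^*G_i^*U_1)-z\,(U_1^*G_{n-i}U_1-U_1^*G_{n-i}^*U_1)$, whose compression to the Nagy--Foias model space $\mathcal H=\widetilde{\textbf{K}_+}\ominus\{\Theta_Pu\oplus\Delta_Pu\}$ does \emph{not} vanish in general. Thus the argument hinges on the precise form of $\widetilde S_{i1}$ recorded in Theorem \ref{commutativemodel} and on the $\Gamma_n$-unitary identity $\widetilde S_{i2}=\widetilde P_2\widetilde S_{(n-i)2}^*$; once both coefficients are matched on the two summands, the exact identity $\widehat C_i+\widetilde P\widehat C_{n-i}^*=\widetilde S_i$ holds, and compressing it to $\mathcal H$ yields $S_i=C_i+PC_{n-i}^*$.
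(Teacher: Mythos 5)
Your proposal is correct and follows essentially the same route as the paper: the paper likewise derives the result by compressing the exact identity on the dilation space $\widetilde{\textbf{K}_+}$ (phrased with adjoints as $S_i^*=C_i^*+C_{n-i}P^*$), using the co-isometric extension property from Proposition \ref{spcoextn} to justify inserting $P_{\mathcal{H}}$ and the $\Gamma_n$-unitary relation $\widetilde S_{i2}^*=\widetilde S_{(n-i)2}\widetilde P_2^*$ on the second summand. The adjoint mismatch you flag on the pure summand is real, but it is a typo in the statement (the first block of $C_i$ should read $I_{H^2(\mathbb D)}\otimes U_1^*G_i^*U_1$); the paper's own computation implicitly uses that corrected reading, exactly as you do.
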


\begin{proof}
	We have $\big(\widetilde{S}_{11} \oplus \widetilde{S}_{12}, \dots, \widetilde{S}_{(n-1)1}\oplus \widetilde{S}_{(n-1)2}, \widetilde{P}_1 \oplus \widetilde{P}_2\big)$ on $\widetilde{\textbf{K}_+}$ is the minimal $\Gamma_n$-isometric dilation of $(S_1, \dots, S_{n-1},P)$, by Theorem  \ref{commutativemodel}. Again by Theorem \ref{spcoextn}, $\big((\widetilde{S}_{11} \oplus \widetilde{S}_{12})^*, \dots, (\widetilde{S}_{(n-1)1}\oplus \widetilde{S}_{(n-1)2})^*, (\widetilde{P}_1 \oplus \widetilde{P}_2)^*\big)$ is a $\Gamma_n$-co-isometric extension of $(S_1^*, \dots,S_{n-1}^*,P^*)$ and hence $\mathcal{H}$ is invariant under $(\widetilde{S}_{i1}\oplus \widetilde{S}_{i2})^*$ and $(\widetilde{P}_1\oplus\widetilde{P}_2)^*$. Since $(\widetilde{S}_{12}, \dots, \widetilde{S}_{(n-1)2},\widetilde{P}_2)$ is a $\Gamma_n$-unitary, so $\widetilde{S}_{i2}^*=\widetilde{S}_{(n-i)2}\widetilde{P}_2^*$. Then 
	\begingroup
	\allowdisplaybreaks
	\begin{align*}
	S_i^* &= ((I_{H^2(\mathbb D)}\otimes U_1^*G_i^*U_1 + M_z \otimes U_1^*G_{n-i}U_1)\oplus \widetilde{S}_{i2})^*|_{\mathcal{H}}\\
	&=\left( ((I_{H^2(\mathbb D)}\otimes U_1^*G_iU_1)\oplus \dfrac{\widetilde{S}_{i2}^*}{2} \right) + \left( (I_{H^2(\mathbb D)}\otimes U_1^*G_{n-i}^*U_1)\oplus \dfrac{\widetilde{S}_{(n-i)2}}{2} \right)\\
	& \qquad\qquad\qquad\qquad\qquad\qquad\qquad((M_z^*\otimes I_{\mathcal{D}_{P^*}})\oplus M_{e^{it}}^*|_{\overline{\Delta_P(L^2(\mathcal{D}_P))}}) \big)|_{\mathcal{H}}\\
	&= P_{\mathcal{H}} \big((I_{H^2(\mathbb D)}\otimes U_1^*G_iU_1)\oplus \dfrac{\widetilde{S}_{i2}^*}{2}\big)|_{\mathcal{H}} + P_{\mathcal{H}}\big((I_{H^2(\mathbb D)}\otimes U_1^*G_{n-i}^*U_1)\\
	& \qquad\qquad\qquad\qquad\quad \oplus\dfrac{\widetilde{S}_{(n-i)2}}{2}\big)
	\big((M_z^*\otimes I_{\mathcal{D}_{P^*}})\oplus M_{e^{it}}^*|_{\overline{\Delta_P(L^2(\mathcal{D}_P))}}\big)|_{\mathcal{H}}\\
	& = P_{\mathcal{H}}\big( (I_{H^2(\mathbb D)}\otimes U_1^*G_iU_1)\oplus \dfrac{\widetilde{S}_{i2}^*}{2}\big)|_{\mathcal{H}} + P_{\mathcal{H}}\big((I_{H^2(\mathbb D)}\otimes U_1^*G_{n-i}^*U_1)\\
	&\qquad\qquad\quad\quad\oplus \dfrac{\widetilde{S}_{(n-i)2}}{2}\big)|_{\mathcal{H}}
	 P_{\mathcal{H}}\big((M_z^*\otimes I_{\mathcal{D}_{P^*}})\oplus M_{e^{it}}^*|_{\overline{\Delta_P(L^2(\mathcal{D}_P))}}\big) \big)|_{\mathcal{H}}.
	\end{align*}
	\endgroup
	The last equality follows from the fact that $\mathcal{H}$ is an invariant subspace for $(M_z^* \otimes I_{\mathcal{D}_{P^*}}\oplus M_{e^{it}}^*|_{\overline{\Delta_P(L^2(\mathcal{D}_P))}})$. Thus $S_i^* = C_i^* + C_{n-i}P^*$ and hence $S_i= C_i + PC_{n-i}^*$.
\end{proof}

\noindent \textbf{Note.} The operators $C_i$ for $i=1, \dots, n-1$, as in the above theorem, may not be unique. For example, suppose $n$ is odd. If we replace $\dfrac{\widetilde{S}_{i2}}{2}$ by $\alpha \widetilde{S}_{i2}$ in $C_i$ and $\dfrac{\widetilde{S}_{(n-i)2}}{2}$ by $(1-\alpha)\widetilde{S}_{(n-i)2}$ in $C_{n-i}$ for $i= 1, \dots, \dfrac{n-1}{2}$ and for any $\alpha \in (0,1)$, then also $S_i= C_i + PC_{n-i}^*$. Suppose $n$ is even. If we take $\alpha \widetilde{S}_{i2}$ instead of $\dfrac{\widetilde{S}_{i2}}{2}$ in $C_i$ and $(1-\alpha)\widetilde{S}_{(n-i)2}$ instead of $\dfrac{\widetilde{S}_{(n-i)2}}{2}$ in $C_{n-i} $ for $i=1, \dots, \dfrac{n}{2}-1$, then also $S_i = C_i+PC_{n-i}^*$.\\

We conclude this section with the following two necessary conditions for the existence of a minimal $\Gamma_n$-isometric dilation of a $\Gamma_n$-contraction. Before that we state a useful result from \cite{sourav14}.

\begin{prop}[\cite{sourav14}, Proposition 6.2 ]\label{spcoextn}
	Let $(R_1, \dots, R_{n-1},V)$ on $\mathcal{K}$ be a $\Gamma_n$-isometric dilation of a $\Gamma_n$-contraction $(S_1, \dots, S_{n-1},P)$ on $\mathcal{H}$. If $(R_1, \dots, R_{n-1},V)$ is minimal, then $(R_1^*, \dots, R_{n-1}^*,V^*)$ is a $\Gamma_n$-co-isometric extension of $(S_1^*,\dots, S_{n-1}^*,P^*)$.
\end{prop}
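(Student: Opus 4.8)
The plan is to first observe that the ``co-isometry'' half of the conclusion is free: by Definition \ref{def3}(iii), the tuple $(R_1^*,\dots,R_{n-1}^*,V^*)$ is a $\Gamma_n$-co-isometry precisely because its adjoint tuple $(R_1,\dots,R_{n-1},V)$ is a $\Gamma_n$-isometry, which is the hypothesis. Thus the entire content is the word \emph{extension}: I must show that $\mathcal{H}$ is invariant under each $R_i^*$ and under $V^*$, and that $R_i^*|_{\mathcal{H}}=S_i^*$ and $V^*|_{\mathcal{H}}=P^*$. Throughout I read minimality in the usual Sz.-Nagy--Foias sense, $\mathcal{K}=\bigvee_{k\ge 0}V^k\mathcal{H}$, so that $V$ is exactly the minimal isometric dilation of the contraction $P$; the dilation hypothesis then supplies the compression identities $P_{\mathcal{H}}R_i|_{\mathcal{H}}=S_i$ and $P_{\mathcal{H}}V^k|_{\mathcal{H}}=P^k$ for all $k\ge 0$. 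Notably the fundamental operator tuples play no role in this particular argument.

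First I would settle the distinguished coordinate $V$. For $h,h'\in\mathcal{H}$ the identity $\langle V^*h,h'\rangle=\langle h,P_{\mathcal{H}}Vh'\rangle=\langle P^*h,h'\rangle$ gives $P_{\mathcal{H}}V^*h=P^*h$, so it remains to kill the component of $V^*h$ orthogonal to $\mathcal{H}$. Testing against the spanning family $\{V^kh':k\ge 0,\ h'\in\mathcal{H}\}$ and using the power compressions, one finds
\[
\langle V^*h,V^kh'\rangle=\langle h,V^{k+1}h'\rangle=\langle h,P^{k+1}h'\rangle=\langle P^*h,V^kh'\rangle ,
\]
for every $k\ge 0$; minimality then forces $V^*h=P^*h\in\mathcal{H}$. (This is just the classical fact that $\mathcal{H}$ is coinvariant for the minimal isometric dilation, and may alternatively be quoted from the Sz.-Nagy--Foias model in which $\mathcal{H}\cong\textbf{H}$.)

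With $V^*|_{\mathcal{H}}=P^*$ in hand I would treat each $R_i$. Exactly as above, $\langle R_i^*h,h'\rangle=\langle h,P_{\mathcal{H}}R_ih'\rangle=\langle S_i^*h,h'\rangle$ shows $P_{\mathcal{H}}R_i^*h=S_i^*h$, and it remains to upgrade this to $R_i^*h=S_i^*h$ outright. For $k\ge 0$ and $h'\in\mathcal{H}$, commutativity of the tuple gives $R_iV^k=V^kR_i$, whence
\[
\langle R_i^*h,V^kh'\rangle=\langle h,V^kR_ih'\rangle=\langle (V^*)^kh,R_ih'\rangle=\langle (P^*)^kh,S_ih'\rangle ,
\]
the last step using Step~1 to replace $(V^*)^kh$ by $(P^*)^kh\in\mathcal{H}$ and then $P_{\mathcal{H}}R_i|_{\mathcal{H}}=S_i$. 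On the other hand, since $S_i^*h\in\mathcal{H}$,
\[
\langle S_i^*h,V^kh'\rangle=\langle S_i^*h,P^kh'\rangle=\langle h,P^kS_ih'\rangle=\langle (P^*)^kh,S_ih'\rangle ,
\]
using $P_{\mathcal{H}}V^k|_{\mathcal{H}}=P^k$ and the commutation $S_iP=PS_i$. The two right-hand sides coincide, so $R_i^*h-S_i^*h$ is orthogonal to every $V^kh'$; minimality gives $R_i^*h=S_i^*h\in\mathcal{H}$. This establishes the extension property for all $i$ and for $V$, completing the proof.

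I would flag one structural point as the place where care is needed rather than a genuine obstruction: the argument leans essentially on minimality being phrased through $V$ alone, namely $\mathcal{K}=\bigvee_{k\ge 0}V^k\mathcal{H}$, since it is against the powers $V^kh'$ that I test the vectors $R_i^*h$ and $V^*h$. If one instead only assumed the weaker spanning by all words in $R_1,\dots,R_{n-1},V$, the same computation still yields $P_{\mathcal{H}}R_i^*|_{\mathcal{H}}=S_i^*$, but promoting this to $R_i^*h\in\mathcal{H}$ would first require isolating $V$ as the minimal isometric dilation of $P$; fortunately that is exactly how minimality is set up here (and in Theorem \ref{spdilation}), so no extra step is needed.
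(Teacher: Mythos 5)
Your overall strategy --- establish the extension property by showing that $V^*h-P^*h$ and $R_i^*h-S_i^*h$ are orthogonal to a spanning family of $\mathcal{K}$, using the compression identities supplied by the dilation, and note that the ``co-isometry'' half is immediate from Definition \ref{def3}(iii) --- is the standard route, and each individual computation you perform is correct. The genuine problem is your reading of ``minimal''. You test only against the family $\{V^kh' : k\ge 0,\ h'\in\mathcal{H}\}$, which suffices only if $\mathcal{K}=\bigvee_{k\ge 0}V^k\mathcal{H}$. But the notion of minimality in force here (stated explicitly in the proof of Theorem \ref{thm:BP001}) is
\[
\mathcal{K}=\overline{\operatorname{span}}\{R_1^{m_1}\cdots R_{n-1}^{m_{n-1}}V^m h : h\in\mathcal{H},\ m_1,\dots,m_{n-1},m\ge 0\},
\]
under which $\bigvee_{k}V^k\mathcal{H}$ is a priori only a proper subspace of $\mathcal{K}$; orthogonality to the vectors $V^kh'$ alone then does not yield $R_i^*h=S_i^*h$ or $V^*h=P^*h$. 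Your closing remark that minimality ``is set up here'' through $V$ alone is true for the concrete Schaffer-type dilation of Theorem \ref{spdilation}, but the proposition concerns an \emph{arbitrary} minimal $\Gamma_n$-isometric dilation, so you cannot assume it.

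The repair is short and preserves your computation: test against an arbitrary word. Since a $\Gamma_n$-isometric dilation satisfies $P_{\mathcal{H}}\,w(R_1,\dots,R_{n-1},V)|_{\mathcal{H}}=w(S_1,\dots,S_{n-1},P)$ for every monomial $w$, one has for $h,h'\in\mathcal{H}$
\begin{align*}
\langle R_i^*h,\ R_1^{m_1}\cdots V^mh'\rangle
&=\langle h,\ R_iR_1^{m_1}\cdots V^mh'\rangle
=\langle h,\ S_iS_1^{m_1}\cdots P^mh'\rangle\\
&=\langle S_i^*h,\ S_1^{m_1}\cdots P^mh'\rangle
=\langle S_i^*h,\ R_1^{m_1}\cdots V^mh'\rangle,
\end{align*}
the last equality because $S_i^*h\in\mathcal{H}$. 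Minimality in the word sense now forces $R_i^*h=S_i^*h$, and the same computation with $V$ in place of $R_i$ gives $V^*h=P^*h$. This is precisely how the cited source argues (the present paper imports the proposition from the reference without reproving it), and it renders your separate Step 1 and the commutation tricks unnecessary. Everything else in your write-up is fine.
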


\begin{thm} \label{thm:BP001}
	Let $(S_1, \dots, S_{n-1},P)$ be a $\Gamma_n$-contraction defined on a Hilbert space $\mathcal{H}$ with $(A_1, \dots, A_{n-1})$ being the $\mathcal{F}_O$-tuple of $(S_1, \dots, S_{n-1},P)$. Let $(R_1, \dots, R_{n-1},V)$ on $\mathcal{K}$ be a minimal $\Gamma_n$-isometric dilation of the $\Gamma_n$-contraction $(S_1, \dots, S_{n-1},P)$. Then there exists a $\Gamma_n$-isometry $(W_1, \dots, W_{n-1},W)$ on $\mathcal{K}\ominus \mathcal{H}$ such that $(A_1, \dots, A_{n-1})=P_{\mathcal{D}_P}(W_1, \dots, W_{n-1})|_{\mathcal{D}_P}$.
\end{thm}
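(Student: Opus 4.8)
The plan is to realize $\mathcal{K}\ominus\mathcal{H}$ as a joint invariant subspace of the dilating tuple and to take $(W_1,\dots,W_{n-1},W)$ to be the corresponding restriction. Write $\mathcal{M}:=\mathcal{K}\ominus\mathcal{H}$. Since the dilation is minimal, Proposition \ref{spcoextn} gives that $(R_1^*,\dots,R_{n-1}^*,V^*)$ is a $\Gamma_n$-co-isometric extension of $(S_1^*,\dots,S_{n-1}^*,P^*)$; this is precisely the statement that $\mathcal{H}$ is invariant under each $R_i^*$ and under $V^*$, with $R_i^*|_{\mathcal{H}}=S_i^*$ and $V^*|_{\mathcal{H}}=P^*$. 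Passing to orthogonal complements, $\mathcal{M}$ is a common invariant subspace for $R_1,\dots,R_{n-1},V$, so I may set $W_i:=R_i|_{\mathcal{M}}$ and $W:=V|_{\mathcal{M}}$.

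To see that $(W_1,\dots,W_{n-1},W)$ is a $\Gamma_n$-isometry, I would unwind Definition \ref{def3}: the $\Gamma_n$-isometry $(R_1,\dots,R_{n-1},V)$ is the restriction of some $\Gamma_n$-unitary $(\widetilde{R}_1,\dots,\widetilde{R}_{n-1},\widetilde{V})$ on a space $\widetilde{\mathcal{K}}\supseteq\mathcal{K}$ to the common invariant subspace $\mathcal{K}$. Because $\mathcal{M}\subseteq\mathcal{K}$ is invariant under $R_i=\widetilde{R}_i|_{\mathcal{K}}$ and $V=\widetilde{V}|_{\mathcal{K}}$, it is also invariant under $\widetilde{R}_i$ and $\widetilde{V}$; hence $(W_1,\dots,W_{n-1},W)$ is itself the restriction of the $\Gamma_n$-unitary $(\widetilde{R}_1,\dots,\widetilde{R}_{n-1},\widetilde{V})$ to a joint invariant subspace and is therefore a $\Gamma_n$-isometry. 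This settles the existence assertion.

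Next I identify $\mathcal{D}_P$ isometrically inside $\mathcal{M}$. Using $V^*|_{\mathcal{H}}=P^*$ one checks that $(V-P)h\perp\mathcal{H}$ and $\|(V-P)h\|=\|D_Ph\|$ for $h\in\mathcal{H}$, so $\iota(D_Ph):=(V-P)h$ defines an isometry $\iota:\mathcal{D}_P\to\mathcal{M}$ (in the Schäffer picture of Theorem \ref{spdilation} its range is exactly the first $\mathcal{D}_P$-summand of $\mathcal{K}\ominus\mathcal{H}$). Under this identification $P_{\mathcal{D}_P}W_i|_{\mathcal{D}_P}=\iota^*W_i\iota$, so the theorem reduces to proving, for all $h,h'\in\mathcal{H}$,
\[
\langle R_i(V-P)h,\,(V-P)h'\rangle=\langle A_iD_Ph,\,D_Ph'\rangle=\langle (S_i-S_{n-i}^*P)h,\,h'\rangle,
\]
the last equality being the defining relation $D_PA_iD_P=S_i-S_{n-i}^*P$.

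The computation of the left-hand side is the heart of the matter. Expanding it into four inner products and using $R_iV=VR_i$, $V^*V=I$ (here $V$ is an isometry by Theorem \ref{charofgamuniiso}), and the extension relations $R_i^*|_{\mathcal{H}}=S_i^*$, $V^*|_{\mathcal{H}}=P^*$, the term $\langle R_iVh,Vh'\rangle$ collapses to $\langle S_ih,h'\rangle$, while the two mixed terms $\langle R_iVh,Ph'\rangle$ and $\langle R_iPh,Ph'\rangle$ both reduce to $\langle PS_ih,Ph'\rangle$ and, entering with opposite signs, cancel. The one genuinely delicate term is $\langle R_iPh,Vh'\rangle$, and this is where I expect the main obstacle to lie: the compression of $R_i$ to $\mathcal{H}$ is $S_i$, \emph{not} $S_i^*$, so there is no direct way to produce the required $S_{n-i}^*$. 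The key is that a $\Gamma_n$-isometry has vanishing fundamental operators --- since $V$ is an isometry, $D_V=0$ forces $R_i=R_{n-i}^*V$. Substituting this identity and again using that $V$ is isometric together with $R_{n-i}^*|_{\mathcal{H}}=S_{n-i}^*$ converts $\langle R_iPh,Vh'\rangle$ into $\langle S_{n-i}^*Ph,h'\rangle$. Assembling the surviving terms gives $\langle S_ih,h'\rangle-\langle S_{n-i}^*Ph,h'\rangle$, which is exactly the right-hand side, and hence $P_{\mathcal{D}_P}(W_1,\dots,W_{n-1})|_{\mathcal{D}_P}=(A_1,\dots,A_{n-1})$. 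Recognizing that the structural relation $R_i=R_{n-i}^*V$ is precisely what unlocks the cancellation in the "wrong-way" term is the crucial insight of the argument.
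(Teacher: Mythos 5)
Your proposal is correct and follows essentially the same route as the paper: both restrict $(R_1,\dots,R_{n-1},V)$ to $\mathcal{K}\ominus\mathcal{H}$ via the co-isometric extension property, embed $\mathcal{D}_P$ by the isometry $D_Ph\mapsto (V-P)h$, and extract $A_i$ from the structural identity $R_i=R_{n-i}^*V$ of a $\Gamma_n$-isometry. The paper reads the same relations off a $2\times 2$ block-matrix computation where you use inner products, and it certifies the $\Gamma_n$-isometry via Theorem \ref{charofgamuniiso} rather than directly from the definition, but these are cosmetic differences.
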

\begin{proof}
	Since $(R_1, \dots, R_{n-1},V)$ on $\mathcal{K}$ is a minimal $\Gamma_n$-isometric dilation of $(S_1, \dots, S_{n-1},P)$, without loss of assumption we can assume that 
	\[
	\mathcal{K}=\overline{\text{span}}\{R_1^{m_1}\dots R_{n-1}^{m_{n-1}}V^mh: h\in \mathcal{H} \text{ and }m_1, \dots, m_{n-1}, m\geq 0 \}.
	\]
	Also, by Theorem \ref{spcoextn} we have that $(R_1^*, \dots, R_{n-1}^*,V^*)|_{\mathcal{H}}=(S_1^*, \dots, S_{n-1}^*,P^*)$. With respect to the decomposition $\mathcal{K}=\mathcal{H}\oplus (\mathcal{K}\ominus \mathcal{H})$ suppose
	\[
	R_i= \begin{pmatrix}
	S_i & 0\\
	T_i & W_i
	\end{pmatrix}, \text{ i=1, \dots, n-1} \quad \text{ and } \quad V=\begin{pmatrix}
	P & 0\\
	T & W
	\end{pmatrix}.
	\]
	Since $V$ is an isometry, so $P^*P+T^*T= I_{\mathcal{H}}$ and $W^*W=I_{ \mathcal{K}\ominus\mathcal{H}}$. Define a map 
	\begingroup
	\allowdisplaybreaks
	\begin{align*}\label{eqV}
	\varphi: & \mathcal{D}_P \rightarrow \mathcal{K}\ominus \mathcal{H} \\
	&D_Px \mapsto Tx.
	\end{align*}
	\endgroup
	Clearly, $\varphi$ is an isometry. Since $(R_1, \dots, R_{n-1},V)$ is a $\Gamma_n$-isometry, so $R_i=R_{n-i}^*V$, that is,
	\[
	\begin{pmatrix}
	S_i & 0\\
	T_i & W_i
	\end{pmatrix}=\begin{pmatrix}
	S_{n-i}^* & T_{n-i}^*\\
	0 & W_{n-i}^*
	\end{pmatrix}\begin{pmatrix}
	P & 0\\
	T & W
	\end{pmatrix}
	\]
	This implies that $S_i-S_{n-i}^*P=T_{n-i}^*T$, $T_{n-i}^*W=0$ and $T_i=W_{n-i}^*T$. Since $(A_{1}, \dots, A_{n-1})$ is the $\mathcal{F}_O$-tuple of $(S_1, \dots, S_{n-1},P)$, so $D_PA_iD_P=T^*W_iT=D_P\varphi^*W_i\varphi D_P$. Then by the uniqueness of the $\mathcal{F}_O$-tuple, we have $A_i=\varphi^* W_i\varphi$. Therefore, $(A_1, \dots, A_{n-1})=P_{\mathcal{D}_P}(W_1, \dots, W_{n-1})|_{\mathcal{D}_P}$. Since $(W_1, \dots, W_{n-1},W)=(R_1, \dots, R_{n-1},V)|_{\mathcal{K}\ominus \mathcal{H}}$, so $(W_1, \dots, W_{n-1},W)$ is a $\Gamma_n$-contraction. Again since $W$ is an isometry, therefore, $(W_1, \dots, W_{n-1},W)$ is an $\Gamma_n$-isometry.
\end{proof}

\begin{thm}\label{necessary1}
	Let $(S_1, \dots, S_{n-1},P)$ be a $\Gamma_n$-contraction on a Hilbert space $\mathcal{H}$ with $(B_1, \dots, B_{n-1})$ being the $\mathcal{F}_O$-tuple of $(S_1^*, \dots, S_{n-1}^*,P^*)$. If $(R_1, \dots R_{n-1},V)$ on $\mathcal{K}=\mathcal{H}\oplus \ell^2(\mathcal{D}_P)$ is a $\Gamma_n$-isometric dilation of $(S_1, \dots, S_{n-1},P)$, then $(M_{B_1^*+zB_{n-1}}, \dots, M_{B_{n-1}^*+zB_{1}},M_z)$ on $H^2(\mathcal{D}_{P^*})$ is a pure $\Gamma_n$-isometry.
\end{thm}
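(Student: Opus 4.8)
The plan is to reduce the statement to a $\Gamma_{n-1}$-contractivity condition and then extract that condition from the pure part of the given dilation. By the converse half of Theorem \ref{puremodel}, applied with coefficient space $\mathcal D_{P^*}$ and with $F_i = B_i$, the tuple $(M_{B_1^*+zB_{n-1}}, \dots, M_{B_{n-1}^*+zB_1}, M_z)$ on $H^2(\mathcal D_{P^*})$ is a pure $\Gamma_n$-isometry if and only if
\[
\Sigma_2(z) = \left( \frac{n-1}{n}(B_1^* + B_{n-1}z), \frac{n-2}{n}(B_2^* + B_{n-2}z), \dots, \frac{1}{n}(B_{n-1}^* + B_1 z) \right)
\]
is a $\Gamma_{n-1}$-contraction for every $z \in \overline{\mathbb D}$. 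Thus the whole theorem reduces to proving this $\Gamma_{n-1}$-contractivity, and this is precisely where the existence of the dilation is used.

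To produce it, I would first apply the Wold-type decomposition of Theorem \ref{wold} to the $\Gamma_n$-isometry $(R_1, \dots, R_{n-1}, V)$, obtaining joint reducing subspaces $\mathcal K = \mathcal K_1 \oplus \mathcal K_2$ on which the tuple is, respectively, a pure $\Gamma_n$-isometry and a $\Gamma_n$-unitary. Since $V$ is the minimal (Schaffer) isometric dilation of $P$ on $\mathcal H \oplus \ell^2(\mathcal D_P)$, its shift part is unitarily equivalent to $M_z$ on $H^2(\mathcal D_{P^*})$; by uniqueness of the Wold decomposition of the isometry $V$, the subspace $\mathcal K_1$ is exactly this shift part, so $V|_{\mathcal K_1} \cong M_z$ and $\mathcal K_1 \cong H^2(\mathcal D_{P^*})$. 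Writing $(\widehat B_1, \dots, \widehat B_{n-1})$ for the $\ft$-tuple of the adjoint pure $\Gamma_n$-co-isometry $(R_1|_{\mathcal K_1}^*, \dots, V|_{\mathcal K_1}^*)$, which acts on $\mathcal D_{(V|_{\mathcal K_1})^*}$ (the constants of $H^2(\mathcal D_{P^*})$), the forward half of Theorem \ref{puremodel} shows that the tuple $\widehat\Sigma(z)$ formed from $(\widehat B_1, \dots, \widehat B_{n-1})$ exactly as $\Sigma_2(z)$ is formed from $(B_1,\dots,B_{n-1})$ is a $\Gamma_{n-1}$-contraction for every $z \in \overline{\mathbb D}$.

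The remaining and most delicate step is the identification $\widehat B_i = B_i$. Since $V|_{\mathcal K_2}$ is unitary it contributes nothing to $\mathcal D_{V^*}$, so the $\ft$-tuple of the full co-isometry $(R_1^*, \dots, R_{n-1}^*, V^*)$ coincides with $(\widehat B_1, \dots, \widehat B_{n-1})$ under $\mathcal D_{V^*} = \mathcal D_{(V|_{\mathcal K_1})^*}$. To match it against $(B_1, \dots, B_{n-1})$, I would use the $\Gamma_n$-isometry relation $R_i = R_{n-i}^* V$, equivalently $R_i^* = V^* R_{n-i}$, so that the defining identity of this $\ft$-tuple becomes $[V^*, R_{n-i}] = D_{V^*}\widehat B_i D_{V^*}$. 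Writing each $R_i$ and $V$ in lower-triangular $2\times2$ block form relative to $\mathcal K = \mathcal H \oplus (\mathcal K \ominus \mathcal H)$, with $(2,1)$-entries $C_i$ and $C$ respectively—legitimate since $\mathcal H$ is co-invariant by Proposition \ref{spcoextn}—the top-left block of $R_i = R_{n-i}^* V$ yields $S_i^* = P^* S_{n-i} + C^* C_{n-i}$, whence the compression of $[V^*, R_{n-i}]$ to $\mathcal H$ equals $S_i^* - S_{n-i}P^* = D_{P^*} B_i D_{P^*}$, the defining relation of $(B_1,\dots,B_{n-1})$. The hard part is to upgrade this compression identity to the operator equality $\widehat B_i = B_i$ on all of $\mathcal D_{V^*}$, which requires the canonical unitary $\mathcal D_{V^*} \cong \mathcal D_{P^*}$ supplied by minimal isometric dilation theory together with the uniqueness of the fundamental operator tuple. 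Once $\widehat B_i = B_i$, we have $\widehat\Sigma = \Sigma_2$, so $\Sigma_2(z)$ is a $\Gamma_{n-1}$-contraction for every $z \in \overline{\mathbb D}$, and the converse half of Theorem \ref{puremodel} delivers the conclusion.
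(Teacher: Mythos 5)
Your proposal is correct and follows essentially the same route as the paper: both arguments hinge on identifying the $\ft$-tuple of the $\Gamma_n$-co-isometry $(R_1^*,\dots,R_{n-1}^*,V^*)$ with $(B_1,\dots,B_{n-1})$ via the canonical unitary $\mathcal{D}_{P^*}\to\mathcal{D}_{V^*}$, $D_{P^*}h\mapsto D_{V^*}h$, then passing to the pure part of the Wold decomposition and invoking Theorem \ref{puremodel}. The step you flag as the hard part is executed in the paper exactly along the lines you indicate: since $\mathcal{D}_{V^*}=\overline{D_{V^*}\mathcal{H}}$ and $D_{V^*}$ is a projection, the compression identity on $\mathcal{H}$ already determines the operator on all of $\mathcal{D}_{V^*}$, and uniqueness of the $\ft$-tuple completes the identification.
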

\begin{proof}
	Clearly $V$ on $\mathcal K$ is the minimal isometric dilation of $P$ and thus $V^*|_{\mathcal{H}}=P^*$. Therefore, it follows trivially that $(R_1, \dots, R_{n-1},V)$ on $\mathcal{K}$ is a minimal $\Gamma_n$-isometric dilation of $(S_1, \dots, S_{n-1},P)$. So, we have $\mathcal{K}=\overline{\text{span}}\{V^nh: h\in \mathcal{H} \text{ and }n \geq 0\}$.  Now for $h\in \mathcal{H}$, 
	\[
	\|D_{V^*}^2h\|^2=\|h\|^2-\|V^*h\|^2=\|h\|^2-\|P^*h\|^2=\|D_{P^*}h\|^2 \quad \& \quad D_{V^*}^2V^nh=0 \quad \text{ for all } \quad n\geq 1.
	\]
	Then $\mathcal{D}_{V^*}=\overline{D_{V^*}\mathcal{K}}=\overline{D_{V^*}\mathcal{H}}$. Define a map $U:\mathcal{D}_{P^*}\to \mathcal{D}_{V^*}$ by $UD_{P^*}h=D_{V^*}h$ and extend to the closure continuously. Clearly $U$ is a unitary and $UD_{P^*}=D_{V^*}$. For $h_1,h_2\in \mathcal{H}$,
	\begingroup
	\allowdisplaybreaks
	\begin{align*}
		&\langle(S_i^*-S_{n-i}P^*)h_1,h_2 \rangle=\langle D_{P^*}B_iD_{P^*}h_1,h_2 \rangle\\
		\Rightarrow & \langle S_i^*h_1,h_2 \rangle-\langle P^*h_1, S_{n-i}^*h_2 \rangle= \langle B_{i}D_{P^*}h_1, D_{P^*}h_2 \rangle\\
		\Rightarrow & \langle R_i^*h_1,h_2 \rangle-\langle V^*h_1, R_{n-i}^*h_2 \rangle= \langle B_iU^*D_{V^*}h_1,U^*D_{V^*}h_2 \rangle \quad (\text{ by Theorem }\ref{spcoextn})\\
		\Rightarrow & \langle (R_i^*-R_{n-i}V^*)h_1,h_2 \rangle=\langle D_{V^*}(UB_iU^*)D_{V^*}h_1,h_2 \rangle.
	\end{align*}
	\endgroup
	Since $V$ is an isometry, so $D_{V^*}$ is a projection. Now for $h, h'\in \mathcal{H}$,
	\[
	\langle (R_i^*-R_{n-i}V^*)D_{V^*}h, D_{V^*}h' \rangle = \langle D_{V^*}(R_i^*-R_{n-i}V^*)D_{V^*}^2h, D_{V^*}h' \rangle.
	\]
	Therefore, $D_{V^*}(R_i^*-R_{n-i}V^*)D_{V^*}=(R_i^*-R_{n-i}V^*)$. Again for $h,h'\in \mathcal{H}$,
	\begingroup
	\allowdisplaybreaks
	\begin{align*}
		\langle (R_i^*-R_{n-i}V^*)D_{V^*}h, D_{V^*}h' \rangle&=\langle D_{V^*}(R_{i}^*-R_{n-i}V^*)D_{V^*}h, h' \rangle\\
		&=\langle (R_i^*-R_{n-i}V^*)h, h' \rangle\\
		&=\langle D_{V^*}(UB_iU^*)D_{V^*}h, h' \rangle\\
		&=\langle D_{V^*}(UB_iU^*)D_{V^*}^2h, D_{V^*}h' \rangle.
	\end{align*}
	\endgroup
	Therefore, $R_i^*-R_{n-i}V^*=D_{V^*}(UB_iU^*)D_{V^*}$. By uniqueness of the $\mathcal{F}_O$-tuple we have $(UB_1U^*, \\\dots, UB_{n-1}U^*)$ is the $\mathcal{F}_O$-tuple of $(R_1^*, \dots, R_{n-1}^*,V^*)$. Now apply Wold decomposition theorem to $(R_1, \dots, R_{n-1},V)$ on $\mathcal{K}$. Then the space $\mathcal{K}$ can be decomposed into $\mathcal{K}_1\oplus \mathcal{K}_2$ such that $(R_1, \dots, R_{n-1},V)|_{\mathcal{K}_1}$ is a pure $\Gamma_n$-isometry and $(R_1, \dots, R_{n-1},V)|_{\mathcal{K}_2}$ is a $\Gamma_n$-unitary. Suppose $(G_1, \dots, G_{n-1})$ is the $\mathcal{F}_O$-tuple of $(R_1^*, \dots, R_{n-1}^*,V^*)|_{\mathcal{K}_1}$. Then $(G_1\oplus 0, \dots, G_{n-1}\oplus 0)$ is the $\mathcal{F}_O$-tuple of $(R_1^*, \dots, R_{n-1}^*,V^*)$. Since $(R_1, \dots, R_{n-1},V)|_{\mathcal{K}_1}$ is a pure $\Gamma_n$-isometry, so is $(M_{G_1^*+zG_{n-1}}, \dots, M_{G_{n-1}^*+G_1},M_z)$ on $H^2(\mathcal{D}_{V^*})$ is a pure $\Gamma_n$-isometry. Therefore, $(M_{B_1^*+zB_{n-1}}, \dots, \\M_{B_{n-1}^*+zB_{1}},M_z)$ is a $\Gamma_n$-contraction and hence a pure $\Gamma_n$-isometry.
	
\end{proof}

\vspace{0.4cm}

\section{An abstract model for a class of c.n.u. $\Gamma_n$-contraction}\label{noncommutativemodel}

\vspace{0.4cm}

\noindent In this Section, we show an explicit construction of an operator model for a class of c.n.u. $\Gamma_n$-contractions which may or may not dilate to $\Gamma_n$-isometries. This precise class of c.n.u. $\Gamma_n$-contraction $(S_1,\dots, S_{n-1}, P)$ satisfies $S_i^*P=PS_i^*$ for each $i=1, \dots , n-1$. Once again the $\ft$-tuples of $(S_1,\dots, S_{n-1}, P)$ and $(S_1^*, \dots, S_{n-1}^*, P^*)$ play central role in this construction. Also, we provide a counterexample to show that such a model may not exist if we drop the condition that $S_i^*$ commutes with $P$ for each $i$. It is well known that for a contraction $P$ $(\text{or }P^*)$, $\{P^{*n}P^n : n\geq 1\}$ $(\text{or }\{P^nP^{*n} : n\geq 1\})$ is a non-increasing sequence of positive operators and it converges strongly to a positive operator. Suppose $\mathcal{A}$ is the strong limit of $\{P^{*n}P^n : n\geq 1\}$ and $\mathcal{A}_{*}$ is the strong limit of $\{P^nP^{*n} : n\geq 1\}$. Recall from Chapter 3 of \cite{K:C} the following map: 
\begingroup
\allowdisplaybreaks
\begin{align}\label{eqV}
	V: & \overline{ \text{Ran}}(\mathcal{A}) \rightarrow \overline{\text{Ran}}(\mathcal{A}) \notag\\
	&\mathcal{A}^{1/2}x \mapsto \mathcal{A}^{1/2}Px.
\end{align}
\endgroup

\begin{prop}[\cite{K:C}, Proposition 3.7]\label{Kcommute}
	The operator $\mathcal{A}^{1/2}\mathcal{A}_*\mathcal{A}^{1/2}$ on $\overline{ \text{Ran}}(\mathcal{A})$ commutes with $V$ and $V^*$.
\end{prop}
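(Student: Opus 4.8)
The plan is to set $B=\mathcal{A}^{1/2}\mathcal{A}_*\mathcal{A}^{1/2}$ and reduce the whole statement to the single identity $VB=BV$ on $\overline{\text{Ran}}(\mathcal{A})$. Indeed, $B$ is self-adjoint and both $V$ and $V^*$ are bounded on $\overline{\text{Ran}}(\mathcal{A})$, so taking adjoints in $VB=BV$ yields $BV^*=V^*B$ at once; thus commutation with $V^*$ comes for free and only the commutation with $V$ needs work.

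First I would record the two fixed-point relations that drive everything. From $\mathcal{A}=\text{s-}\lim_n P^{*n}P^n$, multiplying on the left by $P^*$ and on the right by $P$ and re-indexing gives $P^*\mathcal{A}P=\mathcal{A}$; symmetrically, from $\mathcal{A}_*=\text{s-}\lim_n P^nP^{*n}$ one gets $P\mathcal{A}_*P^*=\mathcal{A}_*$. (The first relation is also exactly what makes $V$ a well-defined isometry, since $\|\mathcal{A}^{1/2}Px\|^2=\langle P^*\mathcal{A}Px,x\rangle=\|\mathcal{A}^{1/2}x\|^2$.) Next, because the vectors $\mathcal{A}^{1/2}x$ are dense in $\overline{\text{Ran}}(\mathcal{A})$ and $V,B$ are bounded, it suffices to check $VB=BV$ on such vectors. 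A direct evaluation using the definition of $V$ gives $VB\,\mathcal{A}^{1/2}x=\mathcal{A}^{1/2}P\mathcal{A}_*\mathcal{A}x$ and $BV\,\mathcal{A}^{1/2}x=\mathcal{A}^{1/2}\mathcal{A}_*\mathcal{A}Px$, so the commutator applied to $\mathcal{A}^{1/2}x$ equals $\mathcal{A}^{1/2}\big(P\mathcal{A}_*\mathcal{A}-\mathcal{A}_*\mathcal{A}P\big)x$. Since $\ker\mathcal{A}^{1/2}=\ker\mathcal{A}$, this vector vanishes as soon as I show $\mathcal{A}\big(P\mathcal{A}_*\mathcal{A}-\mathcal{A}_*\mathcal{A}P\big)=0$.

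The heart of the argument, and the step I expect to be the main obstacle since it requires spotting the right regrouping, is the operator identity $\mathcal{A}P\mathcal{A}_*\mathcal{A}=\mathcal{A}\mathcal{A}_*\mathcal{A}P$. I would prove it by substituting $\mathcal{A}=P^*\mathcal{A}P$ for the rightmost factor and then recognizing the conjugation pattern of $\mathcal{A}_*$: one has $\mathcal{A}P\mathcal{A}_*\mathcal{A}=\mathcal{A}P\mathcal{A}_*(P^*\mathcal{A}P)=\mathcal{A}(P\mathcal{A}_*P^*)\mathcal{A}P=\mathcal{A}\mathcal{A}_*\mathcal{A}P$, where the last equality uses $P\mathcal{A}_*P^*=\mathcal{A}_*$. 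This is precisely $\mathcal{A}\big(P\mathcal{A}_*\mathcal{A}-\mathcal{A}_*\mathcal{A}P\big)=0$, which closes the reduction above and establishes $VB=BV$ on the dense set, hence everywhere by boundedness. The commutation with $V^*$ then follows from the adjoint remark, completing the proof.
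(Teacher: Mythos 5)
The paper does not prove this proposition at all---it is imported verbatim from Kubrusly's book (\cite{K:C}, Proposition 3.7) as a black box---so there is no in-paper argument to compare against. Your proof is correct and self-contained. Every step checks out: the fixed-point relations $P^*\mathcal{A}P=\mathcal{A}$ and $P\mathcal{A}_*P^*=\mathcal{A}_*$ follow from the defining strong limits; the reduction to the dense set $\mathcal{A}^{1/2}\mathcal{H}$ is legitimate because $\overline{\mathrm{Ran}}(\mathcal{A}^{1/2})=\overline{\mathrm{Ran}}(\mathcal{A})$; the evaluation of $VB\mathcal{A}^{1/2}x$ and $BV\mathcal{A}^{1/2}x$ is right; the passage from $\mathcal{A}(P\mathcal{A}_*\mathcal{A}-\mathcal{A}_*\mathcal{A}P)=0$ to $\mathcal{A}^{1/2}(P\mathcal{A}_*\mathcal{A}-\mathcal{A}_*\mathcal{A}P)=0$ correctly uses $\ker\mathcal{A}^{1/2}=\ker\mathcal{A}$; and the regrouping $\mathcal{A}P\mathcal{A}_*(P^*\mathcal{A}P)=\mathcal{A}(P\mathcal{A}_*P^*)\mathcal{A}P$ is exactly the right trick. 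The adjoint argument for $V^*$ is also sound since $B$ is self-adjoint on $\overline{\mathrm{Ran}}(\mathcal{A})$. This is essentially the standard argument one finds in Kubrusly, so you have in effect supplied the omitted proof rather than found a new route.
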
 
The following result gives a necessary and sufficient condition under which a tuple of operators becomes the $\ft$-tuple of a $\Gamma_n$-contraction.
\begin{thm}[{\cite{B:P}, Theorem 2.1}]\label{BP1}
	A tuple of operators $(A_1, \dots, A_{n-1})$ defined on $\mathcal{D}_P$ is the $\ft$-tuple of a $\Gamma_n$-contraction $(S_1,\dots, S_{n-1}, P)$ if and only if $(A_1, \dots, A_{n-1})$ satisfy the following operator equations in $X_1, \dots, X_{n-1}$:
	\[
	D_PS_i = X_iD_P + X_{n-i}^*D_PP, \quad \text{ for } i= 1, \dots, n-1 \,.
	\]
\end{thm}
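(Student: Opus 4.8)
The plan is to prove both implications by relating the defining equations of the $\ft$-tuple, namely $S_i - S_{n-i}^*P = D_PA_iD_P$ for $i=1,\dots,n-1$, to the proposed system $D_PS_i = X_iD_P + X_{n-i}^*D_PP$, using only the identity $D_P^2 = I - P^*P$ and the commutativity $S_iP = PS_i$ (which holds since the tuple is commuting; note $P$ is merely a contraction here, not assumed c.n.u.).

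For the forward implication I would assume $(A_1,\dots,A_{n-1})$ is the $\ft$-tuple and verify the system by testing against vectors $D_Ph'$, since $\mathcal{D}_P = \overline{D_P\mathcal{H}}$. First I would expand $\langle D_PS_ih, D_Ph'\rangle = \langle (I-P^*P)S_ih,h'\rangle = \langle S_ih,h'\rangle - \langle S_iPh,Ph'\rangle$, the last step using $S_iP=PS_i$. On the other side, $\langle A_iD_Ph, D_Ph'\rangle = \langle (S_i - S_{n-i}^*P)h,h'\rangle$ by the defining equation, while the adjoint of the defining equation for the index $n-i$ gives $D_PA_{n-i}^*D_P = S_{n-i}^* - P^*S_i$, so that $\langle A_{n-i}^*D_PPh, D_Ph'\rangle = \langle Ph,S_{n-i}h'\rangle - \langle S_iPh,Ph'\rangle$. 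Adding the two contributions, the terms $\pm\langle Ph,S_{n-i}h'\rangle$ cancel and the sum is exactly $\langle D_PS_ih,D_Ph'\rangle$, which establishes the system.

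For the converse I would argue through a Stein-type equation. Assuming the system holds, left-multiplying the $i$-th equation by $D_P$ gives $D_P^2S_i = M_i + M_{n-i}^*P$, where $M_i := D_PA_iD_P$; combining this with the analogous relation for the index $n-i$ (and its adjoint) eliminates $M_{n-i}^*$ and yields $M_i - P^*M_iP = D_P^2S_i - S_{n-i}^*D_P^2P$. A short computation, using $S_iP=PS_i$ together with its adjoint form $P^*S_{n-i}^* = S_{n-i}^*P^*$, shows that $N_i := S_i - S_{n-i}^*P$ satisfies the \emph{same} equation $N_i - P^*N_iP = D_P^2S_i - S_{n-i}^*D_P^2P$. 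Since the genuine $\ft$-tuple $(\tilde A_1,\dots,\tilde A_{n-1})$ exists and, by the forward implication, also solves the system, the difference $W := M_i - N_i = D_P(A_i-\tilde A_i)D_P$ satisfies the homogeneous equation $W = P^*WP$, hence $W = P^{*k}WP^k$ for every $k$.

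The main obstacle is the final uniqueness step, since a homogeneous Stein equation does not force $W=0$ when $P$ has a unitary summand. The resolution I would use is the decay $D_PP^k \to 0$ strongly, which follows from the telescoping identity $\|D_PP^kh\|^2 = \|P^kh\|^2 - \|P^{k+1}h\|^2$ and the convergence of the monotone sequence $\|P^kh\|$. Because $W$ has the factored form $D_PBD_P$ with $B = A_i - \tilde A_i$, we get $WP^k = D_PBD_PP^k \to 0$ strongly, and then $\|Wh\| = \|P^{*k}WP^kh\| \le \|WP^kh\| \to 0$ forces $W=0$. Thus $D_PA_iD_P = D_P\tilde A_iD_P$, and testing against $D_P\mathcal{H}$ gives $A_i = \tilde A_i$ on $\mathcal{D}_P$, so $(A_1,\dots,A_{n-1})$ is indeed the $\ft$-tuple. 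I expect the adjoint bookkeeping and the verification that $M_i$ and $N_i$ solve the identical Stein equation to be the most delicate part, while the strong-decay argument is the conceptual crux.
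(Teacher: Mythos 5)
Your proof is correct, but note that the paper does not actually prove this statement: Theorem \ref{BP1} is imported verbatim from \cite{B:P} (Theorem 2.1 there) and used as a black box, so there is no internal proof to compare against. Judged on its own terms, your argument is sound. The forward direction (testing $D_PS_i - A_iD_P - A_{n-i}^*D_PP$ against the dense set $D_P\mathcal{H}\subseteq\mathcal{D}_P$, using the defining relation for index $i$ and the adjoint of the relation for index $n-i$, with the cross terms $\pm\langle Ph,S_{n-i}h'\rangle$ cancelling) is exactly the standard computation, and it correctly uses only $S_iP=PS_i$ and $D_P^2=I-P^*P$. For the converse, your reduction to the homogeneous Stein equation $W=P^*WP$ with $W=D_P(A_i-\tilde A_i)D_P$ checks out (the identity $N_i-P^*N_iP=D_P^2S_i-S_{n-i}^*D_P^2P$ does follow from $P^*S_iP=P^*PS_i$ and $P^*S_{n-i}^*P^2=S_{n-i}^*P^*P^2$), and you correctly identify the crux: a homogeneous Stein equation alone does not force $W=0$ for a general contraction, but the factored form $D_P(\cdot)D_P$ together with the telescoping decay $\|D_PP^kh\|^2=\|P^kh\|^2-\|P^{k+1}h\|^2\to 0$ does. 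This is the same device used in the cited literature on fundamental operators (e.g. the uniqueness arguments in \cite{tirtha-sourav}), so your route is consistent with how the source paper argues. Two legitimate but worth-flagging dependencies: you invoke the existence and uniqueness of the $\ft$-tuple (from \cite{S:P2}, quoted in the introduction) to write $S_i-S_{n-i}^*P=D_P\tilde A_iD_P$; this is allowed here, though one could instead run the decay argument directly on the difference of two solutions of the system and thereby get uniqueness of solutions of the system as a byproduct. Also, strictly speaking the converse should conclude with $A_i=\tilde A_i$ \emph{as operators on} $\mathcal{D}_P$, which your final density argument does supply.
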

\begin{lem}\label{lem1}
	Let $(S_1,\dots, S_{n-1}, P)$ be a $\Gamma_n$-contraction on $\mathcal{H}$. Let $(A_1,\dots A_{n-1})$, $(B_1,\dots, B_{n-1})$ be the $\ft$-tuples of $(S_1,\dots, S_{n-1}, P)$ and $(S_1^*,\dots, S_{n-1}^*, P^*)$ respectively. Then for any $i = 1, \dots, n-1$
	\[
	B_i^*D_{P^*}\mathcal{A}^{1/2}|_{\overline{ \text{Ran}}(\mathcal{A})} + PP^*B_{n-i}D_{P^*}\mathcal{A}^{1/2}V = D_{P^*}S_i\mathcal{A}^{1/2}|_{\overline{ \text{Ran}}(\mathcal{A})}
	\] 
	and
	\[
	B_i^*D_{P^*}P^* + PP^*B_{n-i}D_{P^*} = D_{P^*}S_iP^*.
	\]
\end{lem}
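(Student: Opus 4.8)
The plan is to prove the second identity first, since it is purely algebraic, and then deduce the first by an approximation/limiting argument built on the powers of $P^*$. For the second identity, I would start from the defining relation of the $\ft$-tuple $(B_1,\dots,B_{n-1})$ of $(S_1^*,\dots,S_{n-1}^*,P^*)$, namely $S_i^*-S_{n-i}P^*=D_{P^*}B_iD_{P^*}$, and rewrite it as $D_{P^*}B_iD_{P^*}=S_i^*-S_{n-i}P^*$. Taking adjoints gives $D_{P^*}B_i^*D_{P^*}=S_i-PS_{n-i}^*$. The goal expression $B_i^*D_{P^*}P^*+PP^*B_{n-i}D_{P^*}$ should be manipulated by inserting the fundamental relations $D_{P^*}P=PD_P$ and $PD_{P^*}=D_PP$ (equivalently $P^*D_{P^*}=D_PP^*$), together with the companion fundamental relation for $B_{n-i}$, and then collapsing everything onto $D_{P^*}S_iP^*$. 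The natural bookkeeping is to apply Theorem \ref{BP1} to $(S_1^*,\dots,S_{n-1}^*,P^*)$, which gives $D_{P^*}S_i^*=B_iD_{P^*}+B_{n-i}^*D_{P^*}P^*$; taking adjoints of this yields a clean expression for $S_iD_{P^*}$ in terms of $B_i^*$ and $B_{n-i}$, and substituting it into $D_{P^*}S_iP^*$ is what produces the two summands on the left.

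Once the second identity holds, I would attack the first by viewing it as the first identity "twisted" by $\mathcal{A}^{1/2}$ and by the isometry $V$ of \eqref{eqV}. The key observation is that $V$ is characterized by $\mathcal{A}^{1/2}P=V\mathcal{A}^{1/2}$ (so $V\mathcal{A}^{1/2}x=\mathcal{A}^{1/2}Px$), and hence $V^{*}$ intertwines in the reverse direction on $\overline{\operatorname{Ran}}(\mathcal{A})$; correspondingly $\mathcal{A}^{1/2}P^*$ relates to $\mathcal{A}^{1/2}V^{*}$ in the appropriate sense. I would therefore try to obtain the first identity by composing the second identity (applied to the right spot) with $\mathcal{A}^{1/2}$ and replacing the factor $P^*$ by the action of $V$. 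Concretely, the term $B_i^*D_{P^*}P^*$ in the second identity should, after the $\mathcal{A}^{1/2}$-twist, become $B_i^*D_{P^*}\mathcal{A}^{1/2}|_{\overline{\operatorname{Ran}}(\mathcal{A})}$, while the term $PP^*B_{n-i}D_{P^*}$ should become $PP^*B_{n-i}D_{P^*}\mathcal{A}^{1/2}V$, the appearance of $V$ encoding the shift by one power of $P$ that the strong limit $\mathcal{A}$ absorbs. The right-hand side $D_{P^*}S_iP^*$ turns into $D_{P^*}S_i\mathcal{A}^{1/2}|_{\overline{\operatorname{Ran}}(\mathcal{A})}$ after the same substitution, so the two identities are genuinely parallel.

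The hard part will be justifying the passage through $\mathcal{A}^{1/2}$ and $V$ rigorously, i.e.\ showing that the purely algebraic second identity really does "lift" to the $\mathcal{A}^{1/2}$-twisted first identity. The subtlety is that $\mathcal{A}=\operatorname{s\text{-}lim}P^{*m}P^{m}$ is only a strong limit, so I cannot freely commute $\mathcal{A}^{1/2}$ past the operators $B_i, S_i, P, P^*$; instead I expect to test both sides against vectors of the form $\mathcal{A}^{1/2}x$ with $x\in\overline{\operatorname{Ran}}(\mathcal{A})$, use the intertwining $V\mathcal{A}^{1/2}=\mathcal{A}^{1/2}P$ to move $V$ onto a factor of $P$, and then invoke the second (scalar) identity. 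The commutation facts of Proposition \ref{Kcommute}, together with the defining intertwining of $V$, are exactly the tools that should let me slide $V$ and $V^*$ through without needing $\mathcal{A}^{1/2}$ to commute with everything. I would finish by verifying that the resulting equality of operators, initially established on the dense range of $\mathcal{A}^{1/2}$ inside $\overline{\operatorname{Ran}}(\mathcal{A})$, extends by continuity to all of $\overline{\operatorname{Ran}}(\mathcal{A})$, which gives the restricted identity $(\,\cdot\,)|_{\overline{\operatorname{Ran}}(\mathcal{A})}$ as stated.
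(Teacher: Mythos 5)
Your proposal is correct and rests on the same ingredients as the paper's proof (the adjointed defining relations $D_{P^*}B_i^*D_{P^*}=S_i-PS_{n-i}^*$ and $D_{P^*}B_{n-i}D_{P^*}=S_{n-i}^*-S_iP^*$, the invariance $P^*\mathcal{A}P=\mathcal{A}$, and the intertwining $V\mathcal{A}^{1/2}=\mathcal{A}^{1/2}P$), but it is organized differently. The paper proves the two identities by two parallel, independent inner-product computations against vectors $D_{P^*}y$; you instead prove the second identity first and derive the first from it. That derivation does go through, and the ``hard part'' you worry about is in fact painless: apply the already-proved second identity to the vector $w=\mathcal{A}Px$, note $P^*w=P^*\mathcal{A}Px=\mathcal{A}x=\mathcal{A}^{1/2}(\mathcal{A}^{1/2}x)$ and $D_{P^*}\mathcal{A}^{1/2}V\mathcal{A}^{1/2}x=D_{P^*}\mathcal{A}Px=D_{P^*}w$, and you obtain the first identity on the set $\{\mathcal{A}^{1/2}x:x\in\mathcal{H}\}$, which is dense in $\overline{\operatorname{Ran}}(\mathcal{A})$; boundedness of all operators involved then extends it by continuity. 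No commutation of $\mathcal{A}^{1/2}$ with $B_i$, $S_i$ or $P$ is needed, and Proposition \ref{Kcommute} plays no role in this lemma. Two small corrections to your bookkeeping: the relation you write as $PD_{P^*}=D_PP$ is not valid (the correct intertwinings are $PD_P=D_{P^*}P$ and $P^*D_{P^*}=D_PP^*$), and in any case what the cancellation actually uses is that $PP^*=I-D_{P^*}^2$ commutes with $D_{P^*}$ together with the commutativity $S_{n-i}^*P^*=P^*S_{n-i}^*$; with those in hand your collapse to $D_{P^*}^2S_iP^*=D_{P^*}(D_{P^*}S_iP^*)$ is exactly the paper's computation. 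Finally, since $B_i$ is only defined on $\mathcal{D}_{P^*}$, the equalities should be read (as the paper implicitly does) after testing against the dense set $D_{P^*}\mathcal{H}$ of $\mathcal{D}_{P^*}$, which is legitimate because both sides take values in $\mathcal{D}_{P^*}$.
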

\begin{proof}
	Suppose $x\,,y \in \mathcal{H}$. Now using $P^{*}\mathcal{A}P = \mathcal{A}$ we have
	\begingroup
	\allowdisplaybreaks
	\begin{align*}
		& \left\langle \left(B_i^*D_{P^*}\mathcal{A}^{1/2} + PP^*B_{n-i}D_{P^*}\mathcal{A}^{1/2}V\right)\mathcal{A}^{1/2}x, D_{P^*}y \right\rangle \\
		= & \left\langle D_{P^*}\left(B_i^*D_{P^*}\mathcal{A}^{1/2} + PP^*B_{n-i}D_{P^*}\mathcal{A}^{1/2}V\right)\mathcal{A}^{1/2}x, y \right\rangle \\
		= & \left\langle D_{P^*}B_{i}^*D_{P^*}\mathcal{A}x + PP^*D_{P^*}B_{n-i}D_{P^*}\mathcal{A}Px, y \right\rangle\\
		= & \left\langle \left(S_i - PS_{n-i}^* \right)\mathcal{A}x + PP^*\left(S_{n-i}^* - S_iP^*\right)\mathcal{A}Px, y \right\rangle\\
		= & \left\langle \left(S_i - PS_{n-i}^* \right)\mathcal{A}x + PS_{n-i}^*\mathcal{A}x - PP^*S_i\mathcal{A}x, y \right\rangle\\
		= & \left\langle D_{P^*}^2 S_i\mathcal{A}x, y \right\rangle\\
		= & \left\langle \left(D_{P^*}S_i\mathcal{A}^{1/2}\right)\mathcal{A}^{1/2}x, D_{P^*}y \right\rangle
	\end{align*}
	\endgroup
	and
	\begingroup
	\allowdisplaybreaks
	\begin{align*}
	& \left\langle \left(B_i^*D_{P^*}P^* + PP^*B_{n-i}D_{P^*}\right)x, D_{P^*}y \right\rangle \\
	= & \left\langle D_{P^*}\left(B_i^*D_{P^*}P^* + PP^*B_{n-i}D_{P^*}\right)x,y \right\rangle\\
	= & \left\langle \left(S_i - PS_{n-i}^*\right)P^*x + PP^*\left(S_{n-i}^* - S_{i}P^*\right)x,y  \right\rangle \\
	= & \left\langle D_{P^*}^2 S_{i}P^*x, y \right\rangle \\
	= & \left\langle D_{P^*}S_iP^*x, D_{P^*}y \right\rangle.
	\end{align*}
	\endgroup
	Therefore, for any $i=1,\dots, n-1$
	 \[
	 B_i^*D_{P^*}\mathcal{A}^{1/2}|_{\ov{Ran}\; \mathcal A} + PP^*B_{n-i}D_{P^*}\mathcal{A}^{1/2}V = D_{P^*}S_i\mathcal{A}^{1/2}
	 \]
	 and
	 \[
	 B_i^*D_{P^*}P^* + PP^*B_{n-i}D_{P^*} = D_{P^*}S_iP^*.
	 \]
\end{proof}

The following theorem provide model theory for a c.n.u. contraction.
\begin{thm}[Durszt, \cite{Durszt}]\label{durszt}
	A c.n.u. contraction $P$ on a Hilbert space $\mathcal{H}$
	is unitarily equivalent to a part of the direct sum of $M_z^*$ on $H^2(\mathcal{H})$ and $M_{e^{it}}^*$ on $L^2(\mathcal{H})$.
\end{thm}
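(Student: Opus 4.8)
The plan is to realize $P$ as a part (the restriction to an invariant subspace) of $M_z^{*}\oplus M_{e^{it}}^{*}$ by constructing a single isometry $\Phi=\Phi_1\oplus\Phi_2:\mathcal{H}\to H^2(\mathcal{H})\oplus L^2(\mathcal{H})$ that intertwines $P$ with $M_z^{*}\oplus M_{e^{it}}^{*}$ and whose range is invariant under the latter. The guiding idea is to split the norm of a vector $h\in\mathcal{H}$ into a ``stable'' part that the contraction dissipates and an ``asymptotic'' part that survives: since $\{P^{*n}P^{n}\}$ decreases to $\mathcal{A}$, one has $\lim_n\|P^{n}h\|^2=\|\mathcal{A}^{1/2}h\|^2$, so the deficit $\|h\|^2-\|\mathcal{A}^{1/2}h\|^2$ should be carried by the $H^2(\mathcal{H})$-summand (a backward shift), while $\|\mathcal{A}^{1/2}h\|^2$ should be carried by the $L^2(\mathcal{H})$-summand (a backward bilateral, hence unitary, shift).

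For the shift summand I would set $\Phi_1h=\sum_{n\ge0}\big(D_PP^{n}h\big)z^{n}\in H^2(\mathcal{D}_P)\subseteq H^2(\mathcal{H})$. A telescoping computation using $\|D_Px\|^2=\|x\|^2-\|Px\|^2$ gives $\|\Phi_1h\|^2=\sum_{n\ge0}\big(\|P^{n}h\|^2-\|P^{n+1}h\|^2\big)=\|h\|^2-\lim_n\|P^{n}h\|^2=\|h\|^2-\|\mathcal{A}^{1/2}h\|^2$. Reading off coefficients, $M_z^{*}\Phi_1h=\sum_{n\ge0}\big(D_PP^{n+1}h\big)z^{n}=\Phi_1(Ph)$, so $\Phi_1P=M_z^{*}\Phi_1$. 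Thus $\Phi_1$ intertwines $P$ with the backward shift and accounts precisely for the stable part of the norm.

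For the unitary summand I would use the isometry $V$ on $\overline{\mathrm{Ran}}(\mathcal{A})$ determined by $V\mathcal{A}^{1/2}x=\mathcal{A}^{1/2}Px$ (it is isometric because $P^{*}\mathcal{A}P=\mathcal{A}$). Let $\widetilde{V}$ be its minimal unitary extension, so that $\widetilde{V}\,\mathcal{A}^{1/2}=\mathcal{A}^{1/2}P$ and hence $\mathcal{A}^{1/2}$ intertwines $P$ with $\widetilde{V}$. The essential point, where complete non-unitarity enters, is that $\widetilde{V}$ is absolutely continuous: for a c.n.u. $P$ the residual part of its minimal unitary dilation is absolutely continuous, and $\widetilde{V}$ is unitarily equivalent to a part of that residual unitary. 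Granting this, $\widetilde{V}$, of multiplicity at most $\dim\mathcal{H}$, is unitarily equivalent to the restriction of $M_{e^{it}}^{*}$ to a reducing subspace of $L^2(\mathcal{H})$; fixing an isometry $\theta$ with $\theta\widetilde{V}=M_{e^{it}}^{*}\theta$ and setting $\Phi_2=\theta\,\mathcal{A}^{1/2}$, one gets $\|\Phi_2h\|=\|\mathcal{A}^{1/2}h\|$ together with $\Phi_2P=\theta\mathcal{A}^{1/2}P=\theta\widetilde{V}\mathcal{A}^{1/2}=M_{e^{it}}^{*}\Phi_2$.

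Finally I would assemble $\Phi=\Phi_1\oplus\Phi_2$. It is an isometry, since $\|\Phi_1h\|^2+\|\Phi_2h\|^2=\|h\|^2$ by the two norm identities; it satisfies $\big(M_z^{*}\oplus M_{e^{it}}^{*}\big)\Phi=\Phi P$; and consequently $\big(M_z^{*}\oplus M_{e^{it}}^{*}\big)(\mathrm{Ran}\,\Phi)=\Phi(P\mathcal{H})\subseteq\mathrm{Ran}\,\Phi$, so $\mathrm{Ran}\,\Phi$ is invariant and $P$ is unitarily equivalent to $\big(M_z^{*}\oplus M_{e^{it}}^{*}\big)|_{\mathrm{Ran}\,\Phi}$. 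I expect the main obstacle to be the absolute continuity of $\widetilde{V}$: establishing that the surviving asymptotic unitary of a c.n.u. contraction has no singular spectrum is exactly what permits the embedding into $M_{e^{it}}^{*}$ on $L^2(\mathcal{H})$, and it is the single step that genuinely uses the hypothesis rather than the algebra of the dilation.
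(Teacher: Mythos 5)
Your proposal is correct, and its first half is exactly the construction the paper recalls: your $\Phi_1$ is the map $W_1h=\sum_{n\geq 0}z^n\otimes D_PP^nh$ of \eqref{eqW_1}, with the same telescoping identity $\|W_1h\|^2=\|h\|^2-\|\mathcal{A}^{1/2}h\|^2$ and the same intertwining with $M_z^*$. Where you genuinely diverge is the $L^2$-summand. You argue spectrally: pass to the minimal unitary extension $\widetilde{V}$ of the isometry $V\mathcal{A}^{1/2}x=\mathcal{A}^{1/2}Px$, invoke the Sz.-Nagy--Foias theorem that the minimal unitary dilation of a c.n.u.\ contraction has absolutely continuous spectral measure (so that $\widetilde{V}$, being unitarily equivalent to a reducing part of its residual unitary, is absolutely continuous of multiplicity at most $\dim\mathcal{H}$), and then embed $\widetilde{V}$ into the bilateral shift on $L^2(\mathcal{H})$ by multiplicity theory. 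Durszt's proof --- the one the paper follows in \eqref{eqQ}--\eqref{eqW_2} --- builds that embedding by hand: it introduces $Q=(I-\mathcal{A}^{1/2}\mathcal{A}_*\mathcal{A}^{1/2})^{1/2}$, which commutes with $V$ and $V^*$ and is injective with dense range, and defines $W_2x$ on the dense domain $\mathcal{H}_0$ by writing down all Fourier coefficients $D_{P^*}\mathcal{A}^{1/2}Q^{-1}V^{n}\mathcal{A}^{1/2}x$ (and the $V^*$-counterparts for negative indices) explicitly; the square-summability of these coefficients is the constructive substitute for your appeal to absolute continuity. Your route is shorter and correctly isolates the single non-algebraic ingredient, but it yields the intertwiner $\theta$ only up to unitary equivalence, whereas the paper needs the explicit coefficient formulas of $W_2$ downstream (the term-by-term computations in Theorem \ref{noncommutemodel} and Example \ref{example1} manipulate precisely those coefficients), so the explicit version is not optional for the paper's purposes. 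Finally, note that the absolute continuity you ``grant'' is the entire content of that step; a complete write-up should cite it precisely rather than leave it as a closing remark.
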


\noindent Following the proof of Theorem \ref{durszt} we see that there is an isometry
\begin{equation}\label{eqW}
W:\mathcal{H} \rightarrow (H^2(\mathbb D)\otimes \mathcal{D}_{P}) \oplus (L^2(\mathbb T)\otimes \mathcal{D}_{P^*})
\end{equation}
that intertwines $P$ and $(M_z^*\otimes I_{\mathcal{D}_{P}}) \oplus (M_{e^{it}}^*\otimes I_{\mathcal{D}_{P^*}})$.
\[
WP = \left((M_z^*\otimes I_{\mathcal{D}_{P}}) \oplus (M_{e^{it}}^*\otimes I_{\mathcal{D}_{P^*}})\right)W.
\]
Needless to mention that $W$ has two components, say $W_1$ and $W_2$. We are going to recall these two components explicitly.
\begingroup
\allowdisplaybreaks
\begin{align}\label{eqW_1}
	W_1 : & \mathcal{H} \rightarrow H^2(\mathbb D)\otimes \mathcal{D}_P\notag\\
	& h \mapsto \sum\limits_{n=0}^{\infty}z^n \otimes D_PP^nh.
\end{align}
\endgroup
To define $W_2$, let us consider
\begingroup
\allowdisplaybreaks
\begin{align}\label{eqQ}
	Q: & \overline{\text{Ran}}(\mathcal{A}) \rightarrow \overline{\text{Ran}}(\mathcal{A})\notag \\
	&x \mapsto (I - \mathcal{A}^{1/2}\mathcal{A}_{*}\mathcal{A}^{1/2})^{1/2}x.
\end{align}
\endgroup
 By Proposition \ref{Kcommute}, $Q$ commutes with $V$ and $V^*$.
Suppose
\begin{equation}\label{eqH_0}
	\mathcal{H}_{0} = \{x \in \mathcal{H}: \mathcal{A}^{1/2}x \in \text{Ran}(Q)\}.
\end{equation}
Then following the proof of Theorem \ref{durszt}, ${\mathcal{H}}_0$ is a dense linear subspace of $\mathcal{H}$ and $\text{Ker}(Q)=\{0\}$. Hence $Q^{-1}$ exists on $\text{Ran}(Q)$. Define
\begingroup
\allowdisplaybreaks
\begin{align}\label{eqW_2}
	W_2 : &\mathcal{H}_0 \rightarrow L^2
	(\mathbb T) \otimes \mathcal{D}_{P^*}\notag \\
	& x \mapsto \sum_{n=-\infty}^{-1} z^n \otimes D_{P^*}\mathcal{A}^{1/2}Q^{-1}V^{*-n}\mathcal{A}^{1/2}x \notag \\
	& \qquad+ \sum_{n=0}^{\infty} z^n \otimes D_{P^*}\mathcal{A}^{1/2}Q^{-1}V^n\mathcal{A}^{1/2}x.
\end{align}
\endgroup
Now we present the model theorem which is the main result of this section.

\begin{thm}\label{noncommutemodel}
	Suppose $(S_1, \dots, S_{n-1}, P)$ on a Hilbert space $\mathcal{H}$ is a c.n.u. $\Gamma_n$-contraction with $S_i^*P = PS_i^*$ for all $i = 1, \dots, n-1$. Let $(A_1, \dots, A_{n-1})$ and $(B_1, \dots, B_{n-1})$ be the $\mathcal{F}_O$-tuples of $(S_1, \dots, S_{n-1}, P)$ and $(S_1^*, \dots, S_{n-1}^*, P^*)$ respectively. Consider $W=(W_1,W_2)$ as in \eqref{eqW}, \eqref{eqW_1} and \eqref{eqW_2}. Suppose $\mathcal{L}=\text{Ran}(W)$. Then 
	\begingroup
	\allowdisplaybreaks
	\begin{align*}
		S_i &\cong \left((I\otimes A_i + M_{z}^*\otimes A_{n-i}^*) \oplus (I \otimes B_i^* + M_{e^{it}}^* \otimes PP^*B_{n-i})\right)|_{\mathcal{L}}\\
		& \qquad\qquad\qquad\qquad\qquad\qquad\text{and}\\
		P & \cong \left((M_z^* \otimes I_{\mathcal{D}_P})\oplus (M_{e^{it}}^*\otimes I_{\mathcal{D}_{P^*}})\right)|_{\mathcal{L}}.
	\end{align*}
	\endgroup
	\end{thm}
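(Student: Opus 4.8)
The plan is to show that the isometry $W=(W_1,W_2)$ of \eqref{eqW} intertwines each $S_i$ with
$$T_i:=\big(I\otimes A_i+M_z^*\otimes A_{n-i}^*\big)\oplus\big(I\otimes B_i^*+M_{e^{it}}^*\otimes PP^*B_{n-i}\big),$$
that is, to prove $WS_i=T_iW$ on the dense subspace $\mathcal H_0$ of \eqref{eqH_0}. The intertwining of $P$ with $T_P:=(M_z^*\otimes I_{\mathcal D_P})\oplus(M_{e^{it}}^*\otimes I_{\mathcal D_{P^*}})$ is already supplied by Theorem \ref{durszt}, and once $WS_i=T_iW$ is in hand the subspace $\mathcal L=\mathrm{Ran}(W)$ is automatically invariant for each $T_i$ and for $T_P$, so the stated unitary equivalences $S_i\cong T_i|_{\mathcal L}$, $P\cong T_P|_{\mathcal L}$ follow with $W\colon\mathcal H\to\mathcal L$ the implementing unitary. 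As $W$ is block diagonal for the splitting $(H^2(\mathbb D)\otimes\mathcal D_P)\oplus(L^2(\mathbb T)\otimes\mathcal D_{P^*})$, I would verify the two components $W_1S_i=(I\otimes A_i+M_z^*\otimes A_{n-i}^*)W_1$ and $W_2S_i=(I\otimes B_i^*+M_{e^{it}}^*\otimes PP^*B_{n-i})W_2$ independently, by matching Fourier coefficients.

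The preliminary step is to exploit the hypotheses through commutation. Commutativity of the tuple gives $S_iP=PS_i$, and the standing assumption $S_i^*P=PS_i^*$ gives, on taking adjoints, $S_iP^*=P^*S_i$; hence each $S_i$ commutes with both $P$ and $P^*$, and therefore with every $P^{*m}P^m$ and $P^mP^{*m}$, with their strong limits $\mathcal A$ and $\mathcal A_*$, with $\mathcal A^{1/2}$ and $\mathcal A^{1/2}\mathcal A_*\mathcal A^{1/2}$, and thus with $Q$ and $Q^{-1}$ of \eqref{eqQ}; in particular $\overline{\mathrm{Ran}}(\mathcal A)$ reduces $S_i$. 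I would also record the asymptotic intertwiners $V^m\mathcal A^{1/2}=\mathcal A^{1/2}P^m$ and $\mathcal A^{1/2}V^{*k}=P^{*k}\mathcal A^{1/2}$ on $\overline{\mathrm{Ran}}(\mathcal A)$, obtained by iterating the defining relation $V\mathcal A^{1/2}=\mathcal A^{1/2}P$ of \eqref{eqV} and its adjoint $\mathcal A^{1/2}V^*=P^*\mathcal A^{1/2}$, together with the fact (Proposition \ref{Kcommute}) that $Q^{\pm1}$ commutes with $V$ and $V^*$.

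The $W_1$-component is immediate: the $z^n$-coefficient of $W_1S_ih$ is $D_PP^nS_ih=D_PS_iP^nh$, that of $(I\otimes A_i+M_z^*\otimes A_{n-i}^*)W_1h$ is $A_iD_PP^nh+A_{n-i}^*D_PP^{n+1}h$, and these agree after multiplying the operator equation $D_PS_i=A_iD_P+A_{n-i}^*D_PP$ of Theorem \ref{BP1} on the right by $P^n$; only commutativity is used here. For the $W_2$-component, writing $c_m(x)$ for the $z^m$-coefficient of $W_2x$ in \eqref{eqW_2}, the goal is $c_m(S_ix)=B_i^*c_m(x)+PP^*B_{n-i}\,c_{m+1}(x)$ for all $m\in\mathbb Z$. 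For $m\ge0$ I would use $V^m\mathcal A^{1/2}=\mathcal A^{1/2}P^m$ to replace $V^m$, commute $S_i$ leftward through $Q^{-1}$ and $\mathcal A^{1/2}$, and apply the \emph{first} identity of Lemma \ref{lem1}; the trailing $V$ it produces recombines via $V\mathcal A^{1/2}=\mathcal A^{1/2}P$ to give precisely $PP^*B_{n-i}c_{m+1}(x)$, while the leading term is $B_i^*c_m(x)$.

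The delicate range is $m\le-1$, and this is where I expect the main obstacle. Here $c_m(x)=D_{P^*}\mathcal A^{1/2}Q^{-1}V^{*(-m)}\mathcal A^{1/2}x$ carries powers of $V^*$, and the first identity of Lemma \ref{lem1} is of no use: invoking it manufactures the factor $VV^{*(-m)}$, and since $V$ is merely an \emph{isometry}, not a unitary (as already the unilateral shift shows), $VV^*\ne I$ and this does not telescope to $V^{*(-m-1)}$. The remedy is to eliminate $V^*$ \emph{before} bringing in any fundamental-operator relation: using that $Q^{-1}$ commutes with $V^*$ and that $\mathcal A^{1/2}V^{*k}=P^{*k}\mathcal A^{1/2}$, one rewrites $c_m(x)=D_{P^*}P^{*(-m)}\,\mathcal A^{1/2}Q^{-1}\mathcal A^{1/2}x$, a pure $P^*$-expression. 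Commuting $S_i$ through $P^{*(-m)}$ and through $\mathcal A^{1/2}Q^{-1}\mathcal A^{1/2}$ and then applying the \emph{second} identity of Lemma \ref{lem1}, namely $D_{P^*}S_iP^*=B_i^*D_{P^*}P^*+PP^*B_{n-i}D_{P^*}$, once to the factorization $P^{*(-m)}=P^*P^{*(-m-1)}$, yields exactly $B_i^*c_m(x)+PP^*B_{n-i}c_{m+1}(x)$. Thus the two identities of Lemma \ref{lem1} govern the nonnegative and negative indices respectively, and the two computations agree at the junction $m=0,-1$. It then remains to extend $W_2S_i=(\cdots)W_2$ from $\mathcal H_0$ (which $S_i$ leaves invariant, since $S_i$ preserves $\mathrm{Ran}(Q)$) to all of $\mathcal H$ by continuity, combine with the $W_1$-component, and conclude. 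I emphasize that the hypothesis $S_i^*P=PS_i^*$ is used essentially: it is precisely what permits $S_i$ to commute with $P^*$, hence with $\mathcal A$, $\mathcal A_*$, $Q$ and the $P^*$-powers above, without which these manipulations collapse, in harmony with the announced failure of the model when the hypothesis is dropped.
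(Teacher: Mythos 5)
Your proposal is correct and follows essentially the same route as the paper's own proof: both verify the intertwining $WS_i=T_iW$ componentwise on the dense subspace $\mathcal{H}_0$ by matching Fourier coefficients, using Theorem \ref{BP1} for the $W_1$-component and the two identities of Lemma \ref{lem1} for the nonnegative and negative coefficients of the $W_2$-component respectively (the paper peels off a single $V^*$ via $\mathcal{A}^{1/2}V^*=P^*\mathcal{A}^{1/2}$ where you peel off all of them, a cosmetic difference). Your explicit flagging of why $VV^*\neq I$ forces the switch to the second identity for negative indices, and of where $S_i^*P=PS_i^*$ enters, matches the paper's (less explicit) use of the same facts.
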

\begin{proof}
	For any $h \in \mathcal{H}$, we have 
	\begingroup
	\allowdisplaybreaks
	\begin{align*}
	&\left(I \otimes A_i + M_{z}^* \otimes A_{n-i}^* \right)W_1h \\
	= &\left(I \otimes A_i + M_{z}^* \otimes A_{n-i}^* \right)\left(\sum_{k=0}^{\infty}z^k\otimes D_PP^kh\right)\\
	= &\sum_{k=0}^{\infty}z^k\otimes A_iD_PP^kh + \sum_{k=0}^{\infty}z^k\otimes A_{n-i}^*D_PP^{k+1}h\\
	= &\sum_{k=0}^{\infty}z^k\otimes \left(A_iD_P + A_{n-i}^*D_PP \right)P^kh\\
	= &\sum_{k=0}^{\infty}z^k\otimes D_PS_iP^k h \qquad\text{(by Lemma \ref{BP1})}\\
	= &\sum_{k=0}^{\infty}z^k\otimes D_PP^k S_ih\\
	= &W_1S_ih \,.
	\end{align*}
	\endgroup
	Therefore, $(I\otimes A_i + M_z^*\otimes A_{n-i}^*)W_1 = W_1S_i$.\\ 
	
	The linear subspace $\mathcal{H}_0$ is invariant under each $S_i$. Indeed, 
	for any $x \in \mathcal{H}_0$, $\mathcal{A}^{1/2}S_ix  = S_i\mathcal{A}^{1/2}x = S_iQu=QS_iu $ for some $u \in \overline{\text{Ran}}(\mathcal{A})$.
	The last equality follows from the fact that $S_i^*P=PS_i^*$. Now for $x \in \mathcal{H}_{0}$ and $z \in \mathbb T$ we have
	\begingroup
	\allowdisplaybreaks
	\begin{align*}
	& (I \otimes B_i^* + M_{z}^* \otimes PP^*B_{n-i})W_2x \\
	= & (I \otimes B_i^* + M_{z}^* \otimes PP^*B_{n-i})\Bigg( \sum_{n=-\infty}^{-1} z^n \otimes D_{P^*}\mathcal{A}^{1/2}Q^{-1}V^{*-n}\mathcal{A}^{1/2}x \\
	& \qquad\qquad\qquad \qquad\qquad\qquad + \sum_{n=0}^{\infty} z^n \otimes D_{P^*}\mathcal{A}^{1/2}Q^{-1}V^n\mathcal{A}^{1/2}x \Bigg)\\
	= & \sum_{n=-\infty}^{-1} z^n \otimes B_i^*D_{P^*}\mathcal{A}^{1/2}Q^{-1}V^{*-n}\mathcal{A}^{1/2}x + \sum_{n=0}^{\infty} z^n \otimes B_i^*D_{P^*}\mathcal{A}^{1/2}Q^{-1}V^n\mathcal{A}^{1/2}x\\
	& + \sum_{n=-\infty}^{-1} z^n \otimes PP^*B_{n-i}D_{P^*}\mathcal{A}^{1/2}Q^{-1}V^{*-n-1}\mathcal{A}^{1/2}x \\  & + \sum_{n=0}^{\infty} z^n \otimes PP^*B_{n-i}D_{P^*}\mathcal{A}^{1/2}Q^{-1}V^{n+1}\mathcal{A}^{1/2}x. 
	\end{align*}
	\endgroup
	Let us simplify the coefficients of the above expansion. We begin with the constant term. Note that
		\begingroup
	\allowdisplaybreaks
	\begin{align}\label{eqconst}
	& B_i^*D_{P^*}\mathcal{A}^{1/2}Q^{-1}\mathcal{A}^{1/2}x + PP^*B_{n-i}D_{P^*}\mathcal{A}^{1/2}Q^{-1}V\mathcal{A}^{1/2}x\notag\\
	=& B_i^*D_{P^*}\mathcal{A}^{1/2}Q^{-1}\mathcal{A}^{1/2}x + PP^*B_{n-i}D_{P^*}\mathcal{A}^{1/2}VQ^{-1}\mathcal{A}^{1/2}x\notag\\
	=&(B_i^*D_{P^*}\mathcal{A}^{1/2} + PP^*B_{n-i}D_{P^*}\mathcal{A}^{1/2}V)Q^{-1}\mathcal{A}^{1/2}x\notag\\
	=& D_{P^*}S_i\mathcal{A}^{1/2}Q^{-1}\mathcal{A}^{1/2}x \qquad\text{(by Lemma \ref{lem1})}\\
	= & D_{P^*}\mathcal{A}^{1/2}Q^{-1}\mathcal{A}^{1/2}S_ix. \notag
	\end{align}
	\endgroup
	In the last equality $Q^{-1}\mathcal{A}^{1/2}S_ix$ is make sense as $\mathcal{H}_0$ is invariant under $S_i$. We now consider the coefficient of $z^n$.	
	\begingroup
	\allowdisplaybreaks
	\begin{align*}
	& B_i^*D_{P^*}\mathcal{A}^{1/2}Q^{-1}V^n\mathcal{A}^{1/2}x + PP^*B_{n-i}D_{P^*}\mathcal{A}^{1/2}Q^{-1}V^{n+1}\mathcal{A}^{1/2}x\\
	= & (B_i^*D_{P^*}\mathcal{A}^{1/2} + PP^*B_{n-i}D_{P^*}\mathcal{A}^{1/2}V)Q^{-1}V^n\mathcal{A}^{1/2}x\\
	= & D_{P^*}S_i\mathcal{A}^{1/2}Q^{-1}V^n\mathcal{A}^{1/2}x \qquad \text{(by Lemma \ref{lem1})}\\
	= & D_{P^*}\mathcal{A}^{1/2}Q^{-1}V^n\mathcal{A}^{1/2}S_ix.
	\end{align*}
	\endgroup
	In a similar fashion we consider the coefficient of $z^{-n}$.	
	\begingroup
	\allowdisplaybreaks
	\begin{align*}
	& B_i^*D_{P^*}\mathcal{A}^{1/2}Q^{-1}V^{*n}\mathcal{A}^{1/2}x + PP^*B_{n-i}D_{P^*}\mathcal{A}^{1/2}Q^{-1}V^{*(n-1)}\mathcal{A}^{1/2}x\\
	= & (B_i^*D_{P^*}P^* + PP^*B_{n-i}D_{P^*})\mathcal{A}^{1/2}Q^{-1}V^{*(n-1)}\mathcal{A}^{1/2}\\
	= & D_{P^*}S_iP^*\mathcal{A}^{1/2}Q^{-1}V^{*(n-1)}\mathcal{A}^{1/2}x \qquad \text{(by Lemma \ref{lem1})}\\
	= & D_{P^*}\mathcal{A}^{1/2}Q^{-1}V^{*n}\mathcal{A}^{1/2}S_ix.
	\end{align*}
	\endgroup
	Hence, for all $x \in \mathcal{H}_0$,
	 \[
	 W_2S_ix = (I \otimes B_i^* + M_{e^{it}}^* \otimes PP^*B_{n-i})W_2 x .
	 \]
	 Therefore, if $W_0 = W|_{\mathcal{H}_0}:\mathcal{H}_0 \to \mathcal{L}$, then
	 \begingroup
	 \allowdisplaybreaks
	 \begin{align*}
	   & W_0S_ix \\
	 = & W_1S_ix \oplus W_2S_ix \\
	 = & (I \otimes A_i + M_z^* \otimes A_{n-i}^*)W_1x \oplus (I \otimes B_i^* + M_{e^{it}}^* \otimes PP^*B_{n-i})W_2 x \\
	 = & (I \otimes A_i + M_z^* \otimes A_{n-i}^* \oplus I \otimes B_i^* + M_{e^{it}}^* \otimes PP^*B_{n-i}) W_0 x,
	 \end{align*}
	 \endgroup
	 for all $x \in \mathcal{H}_0$. Since $W : \mathcal{H} \to \text{Ran}(W)$ extends $W_0 : \mathcal{H}_0  \to \mathcal{L}$, it follows that 
	 \[
	 WS_ix = \left((I \otimes A_i + M_z^* \otimes A_{n-i}^*) \oplus (I \otimes B_i^* + M_{e^{it}}^* \otimes PP^*B_{n-i}) \right)Wx
	 \]
	 for all $x \in \mathcal{H}$. This implies that $\mathcal{L}(=\text{Ran}(W))$ is an invariant subspace for $(I \otimes A_i + M_z^* \otimes A_{n-i}^*) \oplus (I \otimes B_i^* + M_{e^{it}}^* \otimes PP^*B_{n-i})$. Therefore, 
	 \begingroup
	 \allowdisplaybreaks
	 \begin{align*}
	 	S_i
	 	 =& W^*\big(((I \otimes A_i + M_z^* \otimes A_{n-i}^*) \oplus (I \otimes B_i^* + M_{e^{it}}^* \otimes PP^*B_{n-i}))|_{\mathcal{L}}\big)W\\
	 	 \cong & \big((I \otimes A_i + M_z^* \otimes A_{n-i}^*) \oplus (I \otimes B_i^* + M_{e^{it}}^* \otimes PP^*B_{n-i})\big)|_{\mathcal{L}}.
	 \end{align*}
	 \endgroup
	 By Theorem \ref{durszt}, 
	 \[
	 P \cong \big((M_z^* \otimes I_{\mathcal{D}_P})\oplus (M_{e^{it}}\otimes I_{\mathcal{D}_{P^*}})\big)|_{\mathcal{L}}.
	 \]
	 This completes the proof.
\end{proof}

It is evident from the above construction that the model need not a commutative one unless we assume commutativity conditions on the $\ft$-tuples and their adjoints. Also, the conclusion of the above theorem may fail if we drop the assumption that $S_i^*$ commutes with $P$. Before going to present a counter example, let us recall famous Ando's dilation.

\begin{thm}[\textbf{Ando}, \cite{Ando}]\label{thm2.1}
	For every commuting pair of contractions $(T_1, T_2)$ on a Hilbert space $\mathcal{H}$ there exists a commuting pair of unitaries $(U_1, U_2)$ on a Hilbert space $\mathcal{K}$ containing $\mathcal{H}$ as a subspace such that 
	\[
	T_1^{n_1}T_2^{n_2} = P_{\mathcal{H}}U_1^{n_1}U_2^{n_2}|_{\mathcal{H}}\, \qquad \text{ for } n_1, n_2 \geq 0.
	\]
\end{thm}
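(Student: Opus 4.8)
The plan is to prove Ando's theorem in two stages, following the classical argument: first produce a pair of \emph{commuting isometries} $(V_1,V_2)$ on a space $\mathcal{K}_+\supseteq\mathcal{H}$ that dilates $(T_1,T_2)$, and then extend these commuting isometries to a pair of commuting unitaries $(U_1,U_2)$ on a still larger space $\mathcal{K}$. An isometric dilation with $\mathcal{H}$ co-invariant immediately yields $T_1^{n_1}T_2^{n_2}=P_{\mathcal{H}}V_1^{n_1}V_2^{n_2}|_{\mathcal{H}}$ for $n_1,n_2\ge 0$ (take adjoints of $V_2^{*n_2}V_1^{*n_1}|_{\mathcal{H}}=T_2^{*n_2}T_1^{*n_1}$), so the whole theorem reduces to the isometric step.

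For the commuting isometric dilation I would work on $\mathcal{K}_+=\mathcal{H}\oplus\ell^2(\mathbb N,\mathcal{L})$, where $\mathcal{L}=\mathcal{D}_{T_1}\oplus\mathcal{D}_{T_2}$ (enlarged by an auxiliary infinite-dimensional summand if needed). The heart of the matter is the identity
\[
\|D_{T_1}T_2h\|^2+\|D_{T_2}h\|^2=\|D_{T_1}h\|^2+\|D_{T_2}T_1h\|^2,\qquad h\in\mathcal{H},
\]
which follows by a direct computation with $D_{T_i}^2=I-T_i^*T_i$: both sides equal $\|h\|^2-\|T_1T_2h\|^2$ once the commutativity $T_1T_2=T_2T_1$ is invoked. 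This says that the two maps $h\mapsto(D_{T_1}T_2h,\,D_{T_2}h)$ and $h\mapsto(D_{T_1}h,\,D_{T_2}T_1h)$ from $\mathcal{H}$ into $\mathcal{L}$ have equal norm, so there is a partial isometry carrying the range of the first onto that of the second; after matching dimensions it extends to a unitary $U=\begin{pmatrix}U_{11}&U_{12}\\U_{21}&U_{22}\end{pmatrix}$ on $\mathcal{L}$ with $U_{11}D_{T_1}T_2+U_{12}D_{T_2}=D_{T_1}$ and $U_{21}D_{T_1}T_2+U_{22}D_{T_2}=D_{T_2}T_1$.

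Next I would define $V_1$ and $V_2$ on $\mathcal{K}_+$ in Schäffer-type form (acting as $T_i$ on $\mathcal{H}$, injecting the defect data into the first copy of $\mathcal{L}$ through the appropriate blocks of $U$, and shifting the remaining $\ell^2$-coordinates), choosing the coupling so that the scalar relations defining $U$ are precisely the conditions forcing $V_1V_2=V_2V_1$. One then checks that each $V_i$ is an isometry (again from $D_{T_i}^2=I-T_i^*T_i$) and that $P_{\mathcal{H}}V_i|_{\mathcal{H}}=T_i$ with $\mathcal{H}$ invariant under $V_i^*$, so $(V_1,V_2)$ is a commuting isometric dilation of $(T_1,T_2)$.

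Finally, to pass from commuting isometries to commuting unitaries I would invoke the standard fact that any commuting pair of isometries extends to a commuting pair of unitaries (realised as a direct limit over the $V_i$); the dilation space $\mathcal{K}_+$ stays invariant under each $U_i$ with $U_i|_{\mathcal{K}_+}=V_i$, hence $U_1^{n_1}U_2^{n_2}|_{\mathcal{K}_+}=V_1^{n_1}V_2^{n_2}$ and the compression formula over $\mathcal{H}\subseteq\mathcal{K}_+$ is inherited unchanged. I expect the main obstacle to be the isometric stage: arranging that a \emph{single} unitary $U$ simultaneously makes \emph{both} $V_i$ isometric \emph{and} forces them to commute. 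The norm identity above is exactly what reconciles these competing demands, and the only remaining technical point is the dimension count needed to upgrade the partial isometry between the two defect ranges to an honest unitary on $\mathcal{L}$, which one arranges by padding $\mathcal{L}$ with an auxiliary infinite-dimensional summand.
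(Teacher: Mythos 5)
The paper does not prove this statement at all: it is quoted verbatim as Ando's theorem with a citation to \cite{Ando}, so there is no internal proof to compare against. Judged on its own, your outline is the standard classical argument (norm identity on the defect operators, a matching unitary on an enlarged defect space, Sch\"affer-type commuting isometric dilation, then It\^o-type extension of commuting isometries to commuting unitaries), and the reductions at the two ends are correct: co-invariance of $\mathcal{H}$ under the isometric dilation gives the power-dilation formula by taking adjoints, and the unitary extension inherits it because $\mathcal{K}_+$ stays invariant.

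The one genuine gap is in the middle, and it is exactly the step you flag as "choosing the coupling so that the scalar relations defining $U$ force $V_1V_2=V_2V_1$": you assert that such a coupling exists rather than exhibiting it, and the most natural reading of your construction --- inserting the blocks of $U$ into the \emph{first} copy of $\mathcal{L}$ in each Sch\"affer dilation --- does not work. The obstruction is that $V_1V_2$ and $V_2V_1$ each shift the $\ell^2(\mathbb N,\mathcal{L})$ tail by two slots, so the discrepancy between $(D_{T_1}T_2h,\,D_{T_2}h)$ and $(D_{T_2}T_1h,\,D_{T_1}h)$ lives across a \emph{pair} of consecutive $\mathcal{L}$-slots and recurs all the way down the tail on vectors with nonzero $\ell^2$-component. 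Ando's actual argument takes $V_1=W_1$ and $V_2=G'W_2$ (or a conjugate thereof), where $W_1,W_2$ are the plain Sch\"affer isometries and $G'$ applies the matching unitary $G$ block-diagonally to consecutive pairs of $\mathcal{L}$-slots along the entire tail; the commutation $V_1V_2=V_2V_1$ is then a genuine computation using $G'W_1W_2=W_2W_1G'$, not an automatic consequence of the defining relations of $U$. Your dimension-count remark for upgrading the partial isometry to a unitary is the right fix for that sub-issue (Ando pads with extra copies of $\mathcal{H}$ for the same reason), but the block-diagonal propagation of $G$ down the tail, and the verification that the resulting $V_2$ is still an isometric dilation of $T_2$ with $\mathcal{H}$ co-invariant, are the pieces you would still need to supply to have a complete proof.
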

By Theorem \ref{thm2.1}, a commuting pair of contractions $(T_1,T_2)$ on $\mathcal{H}$ dilates to a commuting pair of unitaries $(U_1,U_2)$ on a Hilbert space $\mathcal{K}$ containing $\mathcal{H}$. Therefore, the $n$-tuple of commuting contractions $(I_{\mathcal{H}}, \dots, I_{\mathcal{H}}, T_1,T_2)$ on $\mathcal{H}$ dilates to the commuting $n$-tuple of unitaries $(I_{\mathcal{K}}, \dots, I_{\mathcal{K}},U_1,U_2)$ on $\mathcal{K}$. Thus \textit{von Neumann inequality} holds for $(I_\mathcal{H}, \dots, I_{\mathcal{H}}, T_1,T_2)$, that is,
\[
\|p(I_{\mathcal{H}}, \dots, I_{\mathcal{H}}, T_1,T_2)\| \leq \|p\|_{\infty, \overline{\mathbb{D}^n}}, \text{ for any }p\in \mathbb{C}[z_1, \dots, z_n].
\]

For $n \geq 2$,  the symmetrization map in $n$-complex variables $z = (z_1,\dots,z_n)$ is the following proper holomorphic map
\[
\pi_n(z) = (s_1(z), \dots, s_{n-1}(z), p(z))
\]
where
\[
s_i(z)=\sum\limits_{1\leq k_1 <k_2 \dots <k_i\leq n}z_{k_1}\dots z_{k_i}\,,\quad i= 1, \dots, n-1  \text{ and } p(z) = \prod_{i=1}^{n}z_i \,.
\]
\begin{lem}\label{lem2.3}
	For a commuting pair of contractions $(T_1, T_2)$ on a Hilbert space $\mathcal{H}$, the symmetrization $\pi_n(I_{\mathcal{H}}, \dots, I_{\mathcal{H}}, T_1, T_2)$ is a $\Gamma_n$-contraction on $\mathcal{H}$.
\end{lem}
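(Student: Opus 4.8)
The plan is to verify the two defining conditions of a $\Gamma_n$-contraction (Definition \ref{def1}) directly for the tuple $(S_1,\dots,S_{n-1},P):=\pi_n(I_{\mathcal H},\dots,I_{\mathcal H},T_1,T_2)$, exploiting that all but the last two entries of the underlying $n$-tuple $\underline T:=(I_{\mathcal H},\dots,I_{\mathcal H},T_1,T_2)$ are scalar. Commutativity of $S_1,\dots,S_{n-1},P$ is immediate since $T_1,T_2$ commute. For the Taylor spectrum condition I would invoke the polynomial spectral mapping theorem for the Taylor joint spectrum: since $\sigma_T(\underline T)\subseteq\{1\}^{n-2}\times\sigma_T(T_1,T_2)\subseteq\{1\}^{n-2}\times\overline{\mathbb D}^2\subseteq\overline{\mathbb D^n}$, the inclusion $\sigma_T(T_1,T_2)\subseteq\overline{\mathbb D}\times\overline{\mathbb D}$ holding because $T_1,T_2$ are contractions, applying $\pi_n$ gives $\sigma_T(S_1,\dots,S_{n-1},P)=\pi_n(\sigma_T(\underline T))\subseteq\pi_n(\overline{\mathbb D^n})=\Gamma_n$.

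For the von Neumann-type inequality, let $r=p/q$ be a rational function with $q$ nonvanishing on $\Gamma_n$. The key reduction is the composition identity $r(\pi_n(\underline T))=(r\circ\pi_n)(\underline T)$. This holds at the polynomial level because the polynomial functional calculus of a commuting tuple respects composition, and it extends to $r$ once one knows that $q(\pi_n(\underline T))=(q\circ\pi_n)(\underline T)$ is invertible; the latter follows from the spectral inclusion of the previous step, since the spectrum of $q(\pi_n(\underline T))$ lies in $q(\Gamma_n)$, which omits $0$. Because $\pi_n(\overline{\mathbb D^n})=\Gamma_n$ and $q$ has no zero on $\Gamma_n$, the denominator $q\circ\pi_n$ is zero-free on $\overline{\mathbb D^n}$, so $r\circ\pi_n$ is holomorphic on a neighbourhood of $\overline{\mathbb D^n}$ and hence lies in the polydisc algebra $A(\mathbb D^n)$.

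I would then apply the polydisc von Neumann inequality for $\underline T$, which is exactly the inequality recorded just before this lemma, itself a consequence of Ando's Theorem \ref{thm2.1} applied to $(T_1,T_2)$ furnishing the commuting unitary dilation $(I_{\mathcal K},\dots,I_{\mathcal K},U_1,U_2)$ of $\underline T$. That recorded inequality is stated for polynomials; to reach the rational function $r\circ\pi_n$ I would approximate it uniformly on $\overline{\mathbb D^n}$ by polynomials (polynomials being dense in $A(\mathbb D^n)$), observe that the polynomial inequality forces the corresponding operators to be Cauchy, hence convergent in norm, and pass to the limit to obtain $\|(r\circ\pi_n)(\underline T)\|\le\|r\circ\pi_n\|_{\infty,\overline{\mathbb D^n}}$. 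Combining this with the composition identity and with $\|r\circ\pi_n\|_{\infty,\overline{\mathbb D^n}}=\sup_{\overline{\mathbb D^n}}|r\circ\pi_n|=\sup_{\Gamma_n}|r|=\|r\|_{\infty,\Gamma_n}$, which again uses surjectivity of $\pi_n$ onto $\Gamma_n$, yields $\|r(S_1,\dots,S_{n-1},P)\|\le\|r\|_{\infty,\Gamma_n}$, the required estimate.

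I expect the main obstacle to be the passage from polynomials to rational functions: one must carefully confirm both that $r\circ\pi_n$ genuinely has no poles on $\overline{\mathbb D^n}$ and that the composition identity $r(\pi_n(\underline T))=(r\circ\pi_n)(\underline T)$ survives the inversion of $q$. Everything else, namely commutativity, the spectral inclusion, and the density argument, is routine once these two points are pinned down.
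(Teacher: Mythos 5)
Your argument is correct and follows essentially the same route as the paper: the heart of both proofs is Ando's dilation giving the von Neumann inequality for $(I_{\mathcal H},\dots,I_{\mathcal H},T_1,T_2)$ over $\overline{\mathbb D^n}$, combined with $\pi_n(\overline{\mathbb D^n})=\Gamma_n$ to transfer the bound to $\Gamma_n$. The paper records only the polynomial inequality and stops there (implicitly relying on the polynomial convexity of $\Gamma_n$), whereas you additionally spell out the Taylor-spectrum inclusion via the spectral mapping theorem and the passage from polynomials to rational functions; these are correct elaborations of the same proof rather than a different method.
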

\begin{proof}
	For any holomorphic polynomial $p$ in $n$-variables we have
	\[
	\|p\circ\pi_n(I_{\mathcal{H}}, \dots, I_{\mathcal{H}}, T_1, T_2)\| \leq \|p\circ\pi_n\|_{\infty, \overline{\mathbb{D}}^n}= \|p\|_{\infty, \pi_n(\overline{\mathbb{D}}^n)} \leq \|p\|_{\infty, \Gamma_n}.
	\]
	Then, $\pi_n(I_{\mathcal{H}}, \dots, I_{\mathcal{H}}, T_1, T_2)$ is a $\Gamma_n$-contraction on $\mathcal{H}$.
	
\end{proof}
We are now ready to present a counterexample.
\begin{eg}\label{example1}
	We consider $\mathcal{H} = \ell^2$, where
	\[
	\ell^2 = \left\{\{x_n\}_n : x_n \in \mathbb{C} \text{ and } \sum_{n=1}^{\infty}|x_n|^2 < \infty\right\}.
	\]
	Consider the operator $T : \ell^2 \rightarrow \ell^2$ defined by 
	\[
	T(x_1, x_2, x_3, \dots) = (0, \alpha x_1, x_2, x_3, \dots), \text{ for some }\alpha \in (0,1).
	\]
	Then by Lemma \ref{lem2.3}, $\pi_n(I_{\mathcal{H}}, \dots, I_{\mathcal{H}}, T, T)$ is a $\Gamma_n$-contraction. Let $(S_1, \dots, S_{n-1},P)$ be the symmetrization of $(I_{\mathcal{H}}, \dots, I_{\mathcal{H}}, T, T)$. Then $S_1=(n-2)I + 2T $, $S_{n-1}=2T+(n-2)T^2$ and $P=T^2$. Note that $S_1^*P \neq PS_1^*$. One can easily check that
	\begingroup
	\allowdisplaybreaks
	\begin{align*}
		&D_P(x_1,x_2,x_3,\dots)=\left( \sqrt{(1 - \alpha^2)}x_1,0,0,\dots \right),\\
		&D_{P^*}(x_1,x_2,x_3,\dots) =\left( x_1,x_2,\sqrt{(1-\alpha^2)}x_3,0,0,\dots \right),\\
		&\mathcal{A}^{1/2}(x_1,x_2,x_3,\dots) = \left( \alpha x_1,x_2,x_3,\dots \right),\\
		&\mathcal{A}_*^{1/2}(x_1,x_2,x_3,\dots) = (0,0,0,\dots),\\
		&Q(x_1,x_2,x_3,\dots)=(x_1,x_2,x_3,\dots) \text{ and }\mathcal{H}_0=\ell^2,
	\end{align*}
	\endgroup
	where $\mathcal{A}$ $(\text{or }\mathcal{A}_*)$ is the strong limit of $\{P^{*n}P^n:n\geq 1\}$ $(\text{or }\{P^nP^{*n}:n\geq 1\})$, $Q$ is in \eqref{eqQ} and $\mathcal{H}_0$ is described in \eqref{eqH_0}. 
	Again it can easily be checked that $S_1^*-S_{n-1}P^*=D_{P^*}B_1D_{P^*}$ and $S_{n-1}^* - S_1P^*=D_{P^*}B_{n-1}D_{P^*}$, where 
	\begingroup
	\allowdisplaybreaks
	\begin{align*}
	&B_{1}(x_1,x_2,x_3,\dots)\\
	 = & \left( (n-2)x_1+2\alpha x_2, (n-2)x_2+2\sqrt{1-\alpha^2}x_3, (n-2)x_3, 0,0,\dots \right)\\
	\text{and }&\\
	& B_{n-1}(x_1,x_2,x_3,\dots)\\
	= & \left( 2\alpha x_2, 2 \sqrt{1-\alpha^2}x_3,0,0,\dots \right).
	\end{align*}
	\endgroup
	By \eqref{eqconst}, the constant term of $\left( I\otimes B_1^*+ M_z^*\otimes PP^*B_{n-1} \right)W_2$ is $D_{P^*}S_1\mathcal{A}$. Now 
	\begingroup
	\allowdisplaybreaks
	\begin{align*}
		&D_{P^*}S_1\mathcal{A}(x_1,x_2,x_3,\dots)\\
		=& D_{P^*}S_1\left( \alpha^2 x_1, x_2,x_3,\dots \right)\\
		=&(n-2)D_{P^*}\left( \alpha^2 x_1,x_2,x_3,\dots \right)+2D_{P^*}\left( 0, \alpha^3x_1,x_2,x_3,\dots \right)\\
		=& (n-2)\left( \alpha^2 x_1,x_2,\sqrt{1-\alpha^2}x_3,0,0,\dots \right)+2\left( 0, \alpha^3x_1,\sqrt{1-\alpha^2}x_2,0,\dots \right).
	\end{align*}
	\endgroup
	Again the constant term of $W_2S_1$ is $D_{P^*}\mathcal{A}S_1$. Now
	\begingroup
	\allowdisplaybreaks
	\begin{align*}
		&D_{P^*}\mathcal{A}S_1(x_1,x_2,x_3,\dots)\\
		=&(n-2)D_{P^*}\mathcal{A}(x_1,x_2,x_3,\dots) + 2 D_{P^*}\mathcal{A}\left( 0, \alpha x_1,x_2,x_3,\dots \right)\\
		=&(n-2)D_{P^*}\left( \alpha^2x_1,x_2,x_3,\dots \right)+2D_{P^*}\left( 0, \alpha x_1,x_2,x_3, \dots \right)\\
		=&(n-2)\left( \alpha^2x_1,x_2,\sqrt{1-\alpha^2}x_3,0,\dots \right)+2\left( 0, \alpha x_1,\sqrt{1-\alpha^2}x_2, 0,\dots \right).
	\end{align*}
	\endgroup
	Therefore, the constant term of $\left( I\otimes B_1^*+ M_z^*\otimes PP^*B_{n-1} \right)W_2$ and the constant term of $W_2S_1$ are different. Thus the conclusion of Theorem \ref{noncommutemodel} is not true for the $\Gamma_n$-contraction $(S_1, \dots,S_{n-1},P)$.
	
\end{eg}

\vspace{0.4cm}

\section{A complete unitary invariant for a class of c.n.u. $\Gamma_n$-contractions} \label{completunitaryinvariant}
	
	\vspace{0.4cm}

\noindent Let us recall from (\ref{char}) the characteristic function of a contraction $P \in \mathcal B(\HS)$ which is defined on the set $\Lambda_P = \{ z\in \mathbb C : (I - zP^*) \text{ is invertible}\}$.
\begin{equation}\label{char-001}
\Theta_P(z) = [-P + zD_{P^*}(I - zP^*)^{-1}D_P]|_{\mathcal{D}_P}\,.
\end{equation} 

\begin{defn}\label{coincide}
	Suppose $P$ and $P'$ are two contractions on $\mathcal{H}$ and $\mathcal{H'}$ respectively. Then we say that the characteristic functions of $P$ and $P'$ coincide if there are unitaries $u: \mathcal{D}_P \to
	\mathcal{D}_{P'}$ and $u_{*}: \mathcal{D}_{P^*} \to
	\mathcal{D}_{{P'}^*}$ such that $u_*\Theta_P(z)=\Theta_{P'}(z)u$ for all $z \in \mathbb D$.
\end{defn}
 The following remarkable result due to Sz.-Nagy and Foias shows that the characteristic function is a complete unitary invariant for c.n.u. contractions.
\begin{thm}[Nagy-Foias, \cite{Nagy}]\label{nf}
	Two c.n.u. contractions are unitarily equivalent
	if and only if their characteristic functions coincide.
\end{thm}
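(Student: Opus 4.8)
The plan is to treat the two implications separately: the forward direction is a short intertwining computation, while the converse is carried entirely by the Sz.-Nagy--Foias functional model recalled in Theorem \ref{nagymodel}.

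For the easy direction, suppose $\Phi:\mathcal{H}\to\mathcal{H}'$ is a unitary with $\Phi P=P'\Phi$. First I would note that $\Phi$ intertwines the defect operators: from $\Phi P^*P\Phi^*=P'^*P'$ one gets $\Phi D_P\Phi^*=D_{P'}$ by uniqueness of the positive square root, and similarly $\Phi D_{P^*}\Phi^*=D_{P'^*}$, so the restrictions $u:=\Phi|_{\mathcal{D}_P}$ and $u_*:=\Phi|_{\mathcal{D}_{P^*}}$ are unitaries onto $\mathcal{D}_{P'}$ and $\mathcal{D}_{P'^*}$. Substituting these into \eqref{char-001} and using $\Phi(I-zP^*)^{-1}=(I-zP'^*)^{-1}\Phi$, a direct computation gives $u_*\Theta_P(z)=\Theta_{P'}(z)u$ for all $z\in\mathbb D$, which is precisely the coincidence of Definition \ref{coincide}.

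For the converse, assume the characteristic functions coincide through unitaries $u:\mathcal{D}_P\to\mathcal{D}_{P'}$ and $u_*:\mathcal{D}_{P^*}\to\mathcal{D}_{P'^*}$ with $u_*\Theta_P(z)=\Theta_{P'}(z)u$. By Theorem \ref{nagymodel} it suffices to produce a unitary from $\textbf{H}$ onto $\textbf{H}'$ (the two model spaces of \eqref{eqH}) intertwining the corresponding model operators $P_{\textbf{H}}(M_z\oplus M_{e^{it}})|_{\textbf{H}}$. I would build it by lifting $u,u_*$ to the ambient dilation space $\textbf{K}_+$ of \eqref{eqK}: let $U_*=I\otimes u_*$ on $H^2(\mathcal{D}_{P^*})$, and let $U$ be the map $\overline{\Delta_P(L^2(\mathcal{D}_P))}\to\overline{\Delta_{P'}(L^2(\mathcal{D}_{P'}))}$ determined by $\Delta_P f\mapsto\Delta_{P'}(I\otimes u)f$. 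The boundary relation $u_*\Theta_P=\Theta_{P'}u$ forces $\Theta_P^*\Theta_P=(I\otimes u)^*\Theta_{P'}^*\Theta_{P'}(I\otimes u)$, hence $\Delta_P^2=(I\otimes u)^*\Delta_{P'}^2(I\otimes u)$ by \eqref{delta}, so $U$ is a well-defined isometry onto a dense subspace and extends to a unitary. Then $\Phi:=U_*\oplus U$ is a unitary $\textbf{K}_+\to\textbf{K}_+'$ commuting with $M_z\oplus M_{e^{it}}$, and applying it fibrewise to a generic element $\Theta_P v\oplus\Delta_P v$ with $v\in H^2(\mathcal{D}_P)$ produces $\Theta_{P'}v'\oplus\Delta_{P'}v'$ with $v'=(I\otimes u)v$; as $v$ ranges over $H^2(\mathcal{D}_P)$ so does $v'$ over $H^2(\mathcal{D}_{P'})$. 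Thus $\Phi$ carries the subtracted subspace in \eqref{eqH} onto its primed counterpart, hence maps $\textbf{H}$ onto $\textbf{H}'$; since $\Phi$ commutes with the dilation it intertwines the two compressions, giving $P\cong P'$ after the model identifications of Theorem \ref{nagymodel}.

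The main obstacle is exactly the middle step above: the well-definedness of the unitary $U$ on the $\Delta$-closure and the verification that $\Phi$ respects the subtracted subspace. This is where the full operator form of the coincidence relation (through the common $u,u_*$), rather than the mere numerical equality of $\Theta_P^*\Theta_P$ and $\Theta_{P'}^*\Theta_{P'}$, is indispensable. A secondary but essential point is that the c.n.u. hypothesis is what makes Theorem \ref{nagymodel} applicable with no residual unitary summand to match separately, so that coincidence of characteristic functions captures the operators completely.
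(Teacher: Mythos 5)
The paper does not prove this statement at all: Theorem \ref{nf} is imported verbatim from the Sz.-Nagy--Foias monograph \cite{Nagy}, so there is no internal argument to compare yours against. Your proof is essentially the classical functional-model argument from that reference and is sound: the forward direction via $\Phi D_P\Phi^*=D_{P'}$ and the resolvent intertwining is correct, and in the converse the identity $\Delta_{P'}(I\otimes u)=(I\otimes u)\Delta_P$ does make $U$ a well-defined unitary between the $\Delta$-closures and does carry the subtracted subspace $\{\Theta_Pv\oplus\Delta_Pv\}$ onto its primed counterpart, whence $\Phi$ restricts to a unitary of $\textbf{H}$ onto $\textbf{H}'$ intertwining the compressions. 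The one step you pass over silently is that Definition \ref{coincide} gives $u_*\Theta_P(z)=\Theta_{P'}(z)u$ only for $z\in\mathbb D$, whereas $\Delta_P$ in \eqref{delta} is built from the boundary values $\Theta_P(e^{it})$; you need the a.e.\ coincidence of these boundary values, which follows from the existence of strong radial limits for bounded analytic operator-valued functions and should be stated explicitly.
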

Here for our purpose we shall follow the terminologies that we define below. 
\begin{defn}\label{chartuple}
	Let $(S_1, \dots, S_{n-1},P)$ be a $\Gamma_n$-contraction on a Hilbert space $\mathcal{H}$. Suppose $(A_1,\dots, A_{n-1})$ is the $\ft$-tuple of $(S_1, \dots, S_{n-1},P)$ and $\Theta_P$ is the characteristic function of $P$. Then the tuple $(A_1,\dots, A_{n-1}, \Theta_P)$ is called the characteristic tuple for $(S_1, \dots, S_{n-1},P)$.
\end{defn}

\begin{defn}\label{unieq}
Let $(S_1, \dots, S_{n-1},P)$ and $(S_1', \dots, S_{n-1}',P')$ are two $\Gamma_n$-contractions defined on $\mathcal{H}$ and $\mathcal{H}'$ respectively. Suppose $(A_1, \dots, A_{n-1})$ and $(A_1', \dots, A_{n-1}')$ are the $\ft$-tuples of $(S_1, \dots, S_{n-1},P)$ and $(S_1', \dots, S_{n-1}', P')$ respectively while
	$(B_1, \dots, B_{n-1})$ and $(B_1', \dots, B_{n-1}')$ are the $\ft$-tuples of $(S_1^*,\dots,S_{n-1}^*,P^*)$ and $(S_1'^*,\dots,S_{n-1}'^*,P'^*)$ respectively. Then
	\begin{enumerate}
		\item[(i)] the $\ft$-tuples $(B_1, \dots, B_{n-1})$ and $(B_1', \dots, B_{n-1}')$ are \textit{C-unitarily equivalent} if $u_{*}B_i = B_i'u_{*}$\,, for $i = 1,\dots,n-1$, where $u_{*}:\mathcal{D}_{P^*} \rightarrow \mathcal{D}_{P'^*}$ is unitary that is involved in the coincidence of the characteristic functions of $P$ and $P'$;
		\item[(ii)] the characteristic tuples $(A_1, \dots,A_{n-1},\Theta_P)$ and $(A_1',\dots,A_{n-1}',  \Theta_{P'})$ are \textit{C-unitarily equivalent} if the characteristic functions $\Theta_P$ of $P$ and $\Theta_{P'}$ of $P'$ coincide and $(A_1,\dots, A_{n-1})$ is unitarily equivalent to $(A_1', \dots, A_{n-1}')$ by the unitary $u : \mathcal{D}_P \rightarrow \mathcal{D}_{P'}$ that is involved in the coincidence of $\Theta_P$ and $\Theta_{P'}$.
	\end{enumerate}
\end{defn}

We now present a \textit{complete unitary invariant} for c.n.u. $\Gamma_n$-contractions $(S_1, \dots, S_{n-1},P)$ such that $S_iP^*=P^*S_i$ for all $i=1, \dots ,n-1$.

\begin{thm}\label{unitaryinv}
	Let $(S_1,\dots,S_{n-1},P)$ and $(S_1',\dots,S_{n-1}',P')$ be two c.n.u. $\Gamma_n$-contractions acting on $\mathcal{H}$ and $\mathcal{H'}$ respectively with $S_i^*P = PS_i^*$ and $S_i^{\prime *}P^{\prime} = P^{\prime}S_i^{\prime *}$ for all $i$, 
	defined on $\mathcal{H}$ and $\mathcal{H'}$ respectively.
	Suppose $(A_1, \dots, A_{n-1})$ and $(A_1^{\prime}, \dots, A_{n-1}^{\prime})$ are the $\ft$-tuples of $(S_1, \dots, S_{n-1},P)$ and $(S_1',\dots,S_{n-1}',P')$ respectively while $(B_1,\dots,B_{n-1})$ and $(B_1',\dots,B_{n-1}')$ are the $\ft$-tuples of $(S_1^*,\dots,S_{n-1}^*,P^*)$ and $(S_1'^*,\dots,S_{n-1}'^*,P'^*)$ respectively. Then $(S_1,\dots, S_{n-1},P)$ is unitarily equivalent to $(S_1', \dots,S_{n-1}',P')$ if and only if
	the characteristic tuples of $(S_1, \dots, S_{n-1},P)$ and $(S_1', \dots, S_{n-1}',P')$ are C-unitarily equivalent and the $\ft$-tuples $(B_1, \dots, B_{n-1})$, $(B_1', \dots, B_{n-1}')$ are C-unitarily equivalent.
\end{thm}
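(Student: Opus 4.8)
The statement is an equivalence, so I would treat the two implications separately, expecting the necessity (``only if'') to be essentially bookkeeping and the sufficiency (``if'') to carry the real content through the abstract model of Theorem \ref{noncommutemodel}.

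\textbf{Necessity.} Suppose there is a unitary $\tau:\mathcal H\to\mathcal H'$ with $\tau S_i=S_i'\tau$ and $\tau P=P'\tau$. Since $\tau$ commutes with the functional calculus of $P$, I get $\tau D_P=D_{P'}\tau$ and $\tau D_{P^*}=D_{P'^*}\tau$, so $\tau$ carries $\mathcal D_P$ onto $\mathcal D_{P'}$ and $\mathcal D_{P^*}$ onto $\mathcal D_{P'^*}$; set $u=\tau|_{\mathcal D_P}$ and $u_*=\tau|_{\mathcal D_{P^*}}$. Substituting these into the definition \eqref{char} of the characteristic function shows $u_*\Theta_P(z)=\Theta_{P'}(z)u$, so $\Theta_P$ and $\Theta_{P'}$ coincide via $(u,u_*)$. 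To transfer the $\ft$-tuples I would apply $\tau$ to the defining relation $D_PA_iD_P=S_i-S_{n-i}^*P$: using $\tau S_i=S_i'\tau$, $\tau S_{n-i}^*=S_{n-i}'^*\tau$ and $\tau P=P'\tau$, the image of the left side is $D_{P'}(uA_i)D_P$ and the image of the right side is $D_{P'}(A_i'u)D_P$, whence injectivity of $D_{P'}$ on $\mathcal D_{P'}$ forces $uA_i=A_i'u$. The identical computation applied to $(S_1^*,\dots,S_{n-1}^*,P^*)$ gives $u_*B_i=B_i'u_*$. Thus the characteristic tuples are C-unitarily equivalent and the $\ft$-tuples $(B_i),(B_i')$ are C-unitarily equivalent.

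\textbf{Sufficiency.} Here I begin from coincidence unitaries $(u,u_*)$ satisfying $uA_i=A_i'u$ and $u_*B_i=B_i'u_*$, and must manufacture a single $\tau$ intertwining all of $S_1,\dots,S_{n-1},P$ with the primed operators. First, coincidence of $\Theta_P,\Theta_{P'}$ and Theorem \ref{nf} yield a unitary $\tau:\mathcal H\to\mathcal H'$ with $\tau P=P'\tau$; the point I would stress is that the unitary supplied by the Sz.-Nagy--Foias functional model restricts to the given data, i.e.\ $\tau|_{\mathcal D_P}=u$ and $\tau|_{\mathcal D_{P^*}}=u_*$. Granting this, all the auxiliary operators of the Durszt embedding transform functorially under $\tau$: from $\tau P=P'\tau$ one gets $\tau\mathcal A=\mathcal A'\tau$ and $\tau\mathcal A_*=\mathcal A_*'\tau$, hence $\tau\mathcal A^{1/2}=\mathcal A'^{1/2}\tau$, $\tau V=V'\tau$ on $\overline{\text{Ran}}(\mathcal A)$, and $\tau Q=Q'\tau$ with $\tau Q^{-1}=Q'^{-1}\tau$ on $\text{Ran}(Q)$.

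The plan is then to compare the two abstract models through the block unitary $\Phi=(I_{H^2(\mathbb D)}\otimes u)\oplus(I_{L^2(\mathbb T)}\otimes u_*)$ from $(H^2(\mathbb D)\otimes\mathcal D_P)\oplus(L^2(\mathbb T)\otimes\mathcal D_{P^*})$ to its primed analogue, checking two facts. First, $\Phi$ intertwines the model operators of Theorem \ref{noncommutemodel}: on the $H^2$ summand this reduces to $uA_i=A_i'u$ and (by adjoints) $uA_{n-i}^*=A_{n-i}'^*u$, while on the $L^2$ summand it reduces to $u_*B_i^*=B_i'^*u_*$ together with the identity $u_*\,PP^*|_{\mathcal D_{P^*}}=P'P'^*|_{\mathcal D_{P'^*}}\,u_*$, which follows from $PP^*=I-D_{P^*}^2$, the invariance of $\mathcal D_{P^*}$ under $D_{P^*}^2$, and $u_*=\tau|_{\mathcal D_{P^*}}$ with $\tau P=P'\tau$. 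Second, $\Phi$ carries the embedding $W$ to $W'$, namely $\Phi W=W'\tau$; I would verify this coefficient by coefficient on $W_1$ and $W_2$ from \eqref{eqW_1} and \eqref{eqW_2}, pushing $u=\tau|_{\mathcal D_P}$ and $u_*=\tau|_{\mathcal D_{P^*}}$ through the now-functorial operators $D_P,D_{P^*},\mathcal A^{1/2},Q^{-1},V$. In particular $\Phi$ maps $\mathcal L=\text{Ran}(W)$ onto $\mathcal L'=\text{Ran}(W')$. Writing $WS_i=M_iW$ and $W'S_i'=M_i'W'$ for the model operators $M_i,M_i'$, these two facts give $W'\tau S_i=\Phi WS_i=\Phi M_iW=M_i'\Phi W=M_i'W'\tau=W'S_i'\tau$, so injectivity of $W'$ yields $\tau S_i=S_i'\tau$ for each $i$, while $\tau P=P'\tau$ holds by construction; hence the two tuples are unitarily equivalent.

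I expect the genuine obstacle to be the claim opening the sufficiency argument: that the \emph{given} coincidence unitaries $(u,u_*)$, rather than merely some abstractly existing pair, arise as the defect-space restrictions of a global intertwiner $\tau$ with $\tau P=P'\tau$. This is precisely the refinement of Theorem \ref{nf} furnished by the Sz.-Nagy--Foias functional model construction, and once it is available the remainder is the routine transport of the Durszt data through $\tau$ described above.
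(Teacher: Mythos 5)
Your proposal is correct and follows essentially the same route as the paper: the necessity direction transports the defining relations of the $\ft$-tuples through the restrictions of the global unitary to the defect spaces, and the sufficiency direction invokes the refinement of Theorem \ref{nf} to obtain a global intertwiner $U$ restricting to $u$ and $u_*$, then pushes the Durszt data $\mathcal A,V,Q,W_1,W_2$ through $U$ and compares the two abstract models of Theorem \ref{noncommutemodel} via the block unitary $(I\otimes u)\oplus(I\otimes u_*)$. The obstacle you flag (that the \emph{given} coincidence unitaries must arise as defect-space restrictions of the model unitary) is exactly the point the paper also handles by appealing to the construction in the proof of Theorem \ref{nf}.
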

\begin{proof}
	Suppose $(S_1,\dots,S_{n-1},P)$ and $(S_1',\dots,S_{n-1}',P')$ are unitarily equivalent and $U : \mathcal{H} \rightarrow \mathcal{H}'$ is a unitary such that $US_i = S_i'U$ for each $i$ and $UP = P'U$. Since both $P\text{ and }P'$ are c.n.u. contractions, then the characteristic functions $\Theta_P$ and $\Theta_{P'}$ coincide. The unitaries that are involved in the coincidence of $\Theta_P$ and $\Theta_{{P'}}$ are nothing but the restrictions of $U$ to $\mathcal{D}_P$ and $\mathcal{D}_{P^*}$. Let $U_1 = U|_{\mathcal{D}_P}$ and $U_2 = U|_{\mathcal{D}_{P^*}}$. Since $UD_P = D_{P'}U$ and $UD_{P^*} = D_{P'^*}U$, so $U_1 \in \mathcal{B}(\mathcal{D}_P, \mathcal{D}_{P'})$ and $U_2 \in \mathcal{B}(\mathcal{D}_{P^*}, \mathcal{D}_{P'^*}) $. Then for any $h\in \mathcal{H}'$ and for each $i$ we have,
	\begingroup
	\allowdisplaybreaks
	\begin{align*}
	D_{P'}U_1A_iU_1^*D_{P'}h = D_{P'}UA_iU^*D_{P'}h &= UD_PA_iD_PU^*h\\
	&= U\left(S_i - S_{n-i}^*P\right)U^*h \\
	& = \left(S_i' - S_{n-i}'^*P'\right)h\\
	& = \left(D_{P'}A_i'D_{P'}\right)h.
	\end{align*}
	\endgroup
	Therefore, if $J=U_1A_iU_1^*-A_i'$ then $J:\mathcal{D}_{P'}\to \mathcal{D}_{P'}$ and $D_{P'}JD_{P'}=0$. Now
	\[
	\langle JD_{P'}h_1,D_{P'}h_2 \rangle=\langle D_{P'}JD_{P'}h_1,h_2 \rangle=0 \text{ for all }h_1, h_2 \in \mathcal{H}'.
	\]
	This shows that $J=0$ and hence $U_1A_iU_1^*=A_i'$. Similarly, $U_2B_iU_2^*=B_i'$.\\
	
	Conversely, suppose the characteristic tuples of $(S_1, \dots, S_{n-1},P)$ and $(S_1', \dots, S_{n-1}',P')$ are C-unitarily equivalent and the $\ft$-tuples $(B_1,\dots, B_{n-1})$ and $(B_1', \dots, B_{n-1}')$ are C-unitarily equivalent. Then by Definition \ref{unieq}, there exist unitaries $u : \mathcal{D}_P \rightarrow \mathcal{D}_{P'}$ and $u_* : \mathcal{D}_{P^*} \rightarrow \mathcal{D}_{P'^*}$ such that for each $i$ and for all $z \in \mathbb{D}$
	\[
	uA_i = A_i'u,\,  u_*B_i = B_i'u_* \text{ and } u_*\Theta_P(z) = \Theta_{{P'}}(z)u\,.
	\]
	Following the proof of Theorem \ref{nf}, there exists a unitary $U : \mathcal{H} \rightarrow \mathcal{H}'$ such that $U|_{\mathcal{D}_P} = u$, $U|_{\mathcal{D}_{P^*}} = u_*$ and $UP = P'U$. Consider
	\begingroup
	\allowdisplaybreaks
	\begin{align*}
	U_1 : & H^2(\mathbb{D}) \otimes \mathcal{D}_P \rightarrow H^2(\mathbb{D})\otimes\mathcal{D}_{P'} \\
	& \sum\limits_{n=0}^{\infty}z^n \otimes x_n \mapsto \sum\limits_{n=0}^{\infty}z^n \otimes Ux_n
	\end{align*}
	\endgroup
	and 
	\begingroup
	\allowdisplaybreaks
	\begin{align*}
	U_2 : & L^2(\mathbb{T}) \otimes \mathcal{D}_{P^*} \rightarrow L^2(\mathbb{T})\otimes\mathcal{D}_{P'^*} \\
	& \sum\limits_{n=-\infty}^{\infty}e^{int} \otimes y_n \mapsto \sum\limits_{n=-\infty}^{\infty}e^{int} \otimes Uy_n.
	\end{align*}
	\endgroup
	In the similar fashion we can define $\mathcal{A}'$, $V',\, Q',\, \mathcal{H}'_0,\, W_1', \, W_2'$ for $P'$ as we have defined for $P$ in the previous section.
	Using the fact $UP=P'U$, one can easily check that $U\mathcal{H}_0 \subseteq \mathcal{H}_0'$.
	By Theorem \ref{durszt}, for $P'$ on
	$\mathcal{H}'$ there exists an isometry $W' : \mathcal{H}'
	\rightarrow H^2(\mathbb D)\otimes \mathcal{D}_{P'} \oplus
	L^2(\mathbb T) \otimes \mathcal{D}_{P'^*}$ such that
	\[
	W'P' = \left( M_z^* \otimes I_{\mathcal{D}_{P'}} \oplus M_{e^{it}}^*
	\otimes I_{\mathcal{D}_{P'^*}} \right)W'.
	\]
	Now for any $x \in \mathcal{H}_0$,
	\begingroup
	\allowdisplaybreaks
	\begin{align*}
		U_1W_1x  = \widetilde{U}\left(\sum_{n=0}^{\infty}z^n \otimes D_PP^nx\right)  &= \sum_{n=0}^{\infty} z^n \otimes UD_PP^nx \\
		& = \sum_{n=0}^{\infty} z^n \otimes D_{P'}P'^nUx \\
		& = W_1'Ux
	\end{align*}
	\endgroup
	and
	\begingroup
	\allowdisplaybreaks
	\begin{align*}
		U_2W_2x  = &\, \widetilde{U}_* \left( \sum_{n=-\infty}^{-1}z^n \otimes D_{P^*}\mathcal{A}^{1/2}Q^{-1}V^{*-n}\mathcal{A}^{1/2}x \right)\\
		& + \widetilde{U}_* \left( \sum_{n=0}^{\infty}z^n \otimes D_{P^*}\mathcal{A}^{1/2}Q^{-1}V^{n}\mathcal{A}^{1/2}x \right)\\
		= & \left( \sum_{n=-\infty}^{-1}z^n \otimes UD_{P^*}\mathcal{A}^{1/2}Q^{-1}V^{*-n}\mathcal{A}^{1/2}x \right) \\
		& + \left( \sum_{n=0}^{\infty}z^n \otimes UD_{P^*}\mathcal{A}^{1/2}Q^{-1}V^{n}\mathcal{A}^{1/2}x \right) \\
		= & \left( \sum_{n=-\infty}^{-1}z^n \otimes D_{P'^*}\mathcal{A}'^{1/2}Q'^{-1}V'^{*-n}\mathcal{A}'^{1/2}Ux \right) \\
		& + \left( \sum_{n=0}^{\infty}z^n \otimes D_{P'^*}\mathcal{A}'^{1/2}Q'^{-1}V'^{n}\mathcal{A}'^{1/2}Ux \right) \\
		= & W_2'Ux\,.
	\end{align*}
	\endgroup
	In the second last equality $Q'^{-1}V'^{*-n}\mathcal{A}'^{1/2}Ux$ and $Q'^{-1}V'^n\mathcal{A}'^{1/2}Ux$ are make sense as $U\mathcal{H}_0\subseteq \mathcal{H}_0'$.
	Therefore, for every $x\in\mathcal{H}_0$, 
	\[
	\left(U_1 \oplus U_2 \right)\left( W_1 \oplus W_2 \right)x = \left( W_1' \oplus W_2' \right)Ux\,.
	\]
	Since $U$ is a unitary from $\mathcal{H}_0$ into $\mathcal{H}_0'$ and $\overline{\mathcal{H}}_0 = \mathcal{H}$, so $\overline{ U\mathcal{H}}_0=U\overline{\mathcal{H}}_0=\mathcal{H}'$. Hence $U_1 \oplus U_2$ is a unitary from $\text{Ran}(W_1\oplus W_2)$ onto $\text{Ran}(W_1' \oplus W_2')$. Now by the definitions of $U_1$ and $U_2$ one can easily check that 
	\begingroup
	\allowdisplaybreaks
	\begin{align*}
		& \left(U_1 \oplus U_2 \right)\left((I \otimes A_i + M_z^* \otimes A_{n-i}^*) \oplus (I \otimes B_i^* + M_{e^{it}}^* \otimes PP^*B_{n-i}) \right) \\
		= & \left(\left( I \otimes A_i' + M_z^* \otimes A_{n-i}'^* \right) \oplus \left( I \otimes B_i'^* + M_{e^{it}}^* \otimes P'P'^*B_{n-i}' \right)\right)\left( U_1 \oplus U_2 \right)
	\end{align*}
	\endgroup
	and
	\begingroup
	\allowdisplaybreaks
	\begin{align*}
		& \left(U_1 \oplus U_2 \right) \left( M_z^* \otimes I_{\mathcal{D}_P} \oplus M_{e^{it}}^* \otimes I_{\mathcal{D}_{P^*}} \right) \\
		= & \left( M_z^* \otimes I_{\mathcal{D}_{P'}} \oplus M_{e^{it}}^* \otimes I_{\mathcal{D}_{P'^*}} \right)\left(U_1 \oplus U_2 \right).
	\end{align*}
	\endgroup
	Therefore, $(S_1, \dots, S_{n-1}, P)$ and $(S_1', \dots, S_{n-1}', P')$ are unitarily equivalent. The proof is now complete.
\end{proof}

\vspace{0.3cm}

\section{Beurling-Lax-Halmos type representation: an alternative proof} \label{lastSec}

\vspace{0.4cm}

\noindent A Beurling-Lax-Halmos type reprresentation theorem for invariant subspaces of a pure $\Gamma_n$-isometry was obtained in \cite{S:B}. Here we provide an alternative proof to that result.
	\begin{thm} [\cite{S:B}, Theorem 5.3]
		Let $\mathcal{M}$ be a non-zero closed linear subspace of $H^2(\mathcal{E})$. Then $\mathcal{M}$ is an invariant subspace of a pure $\Gamma_n$-isometry $(M_{A_1^*+zA_{n-1}}, \dots, M_{A_{n-1}^*+zA_1},M_z)$ on $H^2(\mathcal{E})$ if and only if there exist unique $B_1, \dots, B_{n-1}\in H^2(\mathcal{E}_*)$ such that $(M_{B_1+zB_{n-1}^*}, \dots, M_{B_{n-1}+zB_1^*},M_z)$ on $H^2(\mathcal{E}_*)$ is a pure $\Gamma_n$-isometry and for all $z\in \D$ ,
		\[
		(A_i^*+zA_{n-i})\Theta(z)=\Theta(z)(B_i+zB_{n-i}^*), \quad \text{ for }i= 1, \dots, n-1,
		\]
		where $(\mathcal{E}_*,\mathcal{E},\Theta)$ is the Beurling-Lax-Halmos representation of $\mathcal{M}$.
	\end{thm}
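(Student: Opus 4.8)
The plan is to extract all the structural data from the final coordinate $M_z$ via the classical Beurling--Lax--Halmos theorem, and then to transport the first $n-1$ coordinates through the resulting inner multiplier $\Theta$.

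For the forward implication I would argue as follows. If $\mathcal{M}$ is a joint invariant subspace of the pure $\Gamma_n$-isometry $(M_{A_1^*+zA_{n-1}}, \dots, M_{A_{n-1}^*+zA_1}, M_z)$, then in particular $\mathcal{M}$ is invariant under $M_z$, so Beurling--Lax--Halmos produces the representation $(\mathcal{E}_*, \mathcal{E}, \Theta)$ with $\Theta$ inner and $M_\Theta : H^2(\mathcal{E}_*) \to \mathcal{M}$ a unitary onto $\mathcal{M}$. Writing $S_i = M_{A_i^*+zA_{n-i}}$, the restriction $(S_1|_{\mathcal{M}}, \dots, S_{n-1}|_{\mathcal{M}}, M_z|_{\mathcal{M}})$ is a $\Gamma_n$-isometry, being the restriction of a $\Gamma_n$-isometry to a common invariant subspace, and $M_z|_{\mathcal{M}}$ is a pure isometry; hence it is a pure $\Gamma_n$-isometry. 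Conjugating by $M_\Theta$ then yields a pure $\Gamma_n$-isometry $(\tilde S_1, \dots, \tilde S_{n-1}, \tilde P)$ on $H^2(\mathcal{E}_*)$, and since $M_z M_\Theta = M_\Theta M_z$ and $M_\Theta^*M_\Theta = I$, its last coordinate is exactly $\tilde P = M_\Theta^* M_z M_\Theta = M_z$.

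Next I would invoke the pure-isometry model, Theorem \ref{puremodel}. Because $\tilde P = M_z$ on $H^2(\mathcal{E}_*)$ already, each $\tilde S_i$ is forced to be the analytic Toeplitz operator $M_{B_i + zB_{n-i}^*}$ for some $B_i \in \mathcal{B}(\mathcal{E}_*)$, with $(M_{B_1+zB_{n-1}^*}, \dots, M_{B_{n-1}+zB_1^*}, M_z)$ a pure $\Gamma_n$-isometry; equivalently, $\tilde S_i$ commutes with $M_z$, hence is analytic Toeplitz, and the $\Gamma_n$-isometry relation $\tilde S_i = \tilde S_{n-i}^* M_z$ pins its symbol to the affine form $B_i + zB_{n-i}^*$. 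Transcribing the conjugation identity $S_i M_\Theta = M_\Theta \tilde S_i$ at the level of multiplier symbols gives precisely $(A_i^*+zA_{n-i})\Theta(z) = \Theta(z)(B_i + zB_{n-i}^*)$ for every $z \in \D$. Uniqueness is then immediate from injectivity of $M_\Theta$: a second family would force $M_\Theta M_{(B_i-B_i')+z(B_{n-i}-B_{n-i}')^*} = 0$, so the affine symbol vanishes identically and $B_i = B_i'$ after matching constant and linear coefficients.

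For the converse I would simply run this backwards: equality of the analytic multiplier symbols $(A_i^*+zA_{n-i})\Theta(z) = \Theta(z)(B_i+zB_{n-i}^*)$ upgrades to $S_i M_\Theta = M_\Theta M_{B_i+zB_{n-i}^*}$, so for any $g$ one has $S_i(M_\Theta g) = M_\Theta(M_{B_i+zB_{n-i}^*}g) \in \text{Ran}(M_\Theta) = \mathcal{M}$; together with the $M_z$-invariance built into $\mathcal{M} = \Theta H^2(\mathcal{E}_*)$ this shows $\mathcal{M}$ is a joint invariant subspace. The hard part will be the forward identification of the transported coordinates as the \emph{affine} multipliers $M_{B_i+zB_{n-i}^*}$: one must verify that conjugation by $M_\Theta$ leaves the last coordinate literally equal to $M_z$ (so that no spurious unitary is introduced) and then use the rigidity coming from $\tilde S_i = \tilde S_{n-i}^* M_z$ to rule out higher-order terms in the symbols. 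Everything else is routine symbol bookkeeping.
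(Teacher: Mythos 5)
Your proposal is correct and follows essentially the same route as the paper: Beurling--Lax--Halmos applied to the last coordinate $M_z$, transport of the remaining coordinates through the isometry $M_\Theta$, analyticity of the compressed operators $M_\Theta^* M_{A_i^*+zA_{n-i}} M_\Theta$ from their commutation with $M_z$, the $\Gamma_n$-isometry relation $\widetilde{S}_i=\widetilde{S}_{n-i}^*M_z$ to force the affine symbols $B_i+zB_{n-i}^*$, and uniqueness from injectivity of $M_\Theta$. The only cosmetic difference is that you establish that the transported tuple is a pure $\Gamma_n$-isometry up front (restriction to a joint invariant subspace plus unitary equivalence), whereas the paper verifies this at the end by checking commutativity and the von Neumann inequality over $\Gamma_n$.
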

\begin{proof}
	We prove only the forward direction and a proof for the reverse direction is similar. Let $\mathcal{M}$ be invariant under $(M_{A_1^*+zA_{n-1}}, \dots, M_{A_{n-1}^*+zA_1},M_z)$. Then $\mathcal{M}$ is invariant under $M_z$. Let $\mathcal{M}=M_{\Theta}H^2(\mathcal{E}_*)$ be the Beurling-Lax-Halmos representation of $\mathcal{M}$, where $(\mathcal{E}_*,\mathcal{E},\Theta)$ is an inner multiplier. Since $\mathcal{M}$ is $M_{A_i^*+A_{n-i}z}$-invariant, we have 
	\[
	M_{A_i^*+zA_{n-i}}M_{\Theta}H^2(\mathcal{E}_*)\subseteq M_{\Theta}H^2(\mathcal{E}_*), \quad \text{ for all } i=1, \dots, n-1.
	\]
	Then there exist $G_1, \dots, G_n$ in $\mathcal B(H^2(\mathcal{E}_*))$ such that 
	\begin{equation}\label{(1)}
		M_{A_i^*+zA_{n-i}}M_{\Theta}=M_{\Theta}G_i.
	\end{equation}
	Since $M_{\Theta}$ is an isometry, $G_i$'s are well defined and unique. Now from \eqref{(1)} we have $G_i=M_{\Theta}^*M_{A_{i}^*+zA_{n-i}}M_{\Theta}$ which further implies that 
	\begingroup
	\allowdisplaybreaks
	\begin{align*}
		 M_zG_i&=M_zM_{\Theta}^*M_{A_i^*+zA_{n-i}}M_{\Theta}\\
		&=M_{\Theta}^*M_{A_i^*+zA_{n-i}}M_{\Theta}M_z\\
		&=G_iM_z \text{ for }i= 1, \dots, n-1.
	\end{align*}
	\endgroup
	Therefore, $G_i=M_{\Phi_i}$ and $G_{n-i}=M_{\Phi_{n-i}}$ for some $\Phi_i, \Phi_{n-i}\in H^{\infty}(\mathcal{B}(\mathcal{E}_*))$. From \eqref{(1)} we have $M_{\Phi_i}=M_{\Theta}M_{A_i^*+zA_{n-i}}M_{\Theta}$. Now
	\[
	M_z^*M_{\Phi_i}=M_{\Theta}^*M_{A_{n-i}^*+zA_{i}}M_{\Theta}=M_{\Phi_{n-i}}^*,
	\] 
	that is,
	$
	M_{\Phi_{n-i}}=M_{\Phi_i}^*M_z$. Similarly we have $M_{\Phi_i}=M_{\Phi_{n-i}}^*M_z$.
	Considering the power series expressions of $\Phi_i$ and $\Phi_{n-i}$ and using $M_{\Phi_i}=M_{\Phi_{n-i}}^*M_z$ and $M_{\Phi_{n-i}}=M_{\Phi_i}^*M_z$, we get $\Phi_i(z)=B_i+zB_{n-i}^*$ and $\Phi_{n-i}(z)=B_{n-i}+zB_{i}^*$, for some $B_i, B_{n-i}\in \mathcal{B}(\mathcal{E}_*)$. Uniqueness of $B_i$ and $B_{n-i}$ follows from the uniqueness of $G_i$ and $G_{n-i}$. It is clear that $M_zM_{B_i+zB_{n-i}^*}=M_{B_i+zB_{n-i}^*}M_z$. Now 
	\begingroup
	\allowdisplaybreaks
	\begin{align*}
		M_{B_i+zB_{n-i}^*}M_{B_j+zB_{n-j}^*}&=M_{\Theta}^*M_{A_i^*+zA_{n-i}}M_{\Theta}M_{\Theta}^*M_{A_j^*+zA_{n-j}}M_{\Theta}\\
		&=M_{\Theta}^*M_{A_i^*+zA_{n-i}}M_{A_j^*+zA_{n-j}}M_{\Theta}\\
		&=M_{\Theta}^*M_{A_j^*+zA_{n-j}}M_{A_i^*+zA_{n-i}}M_{\Theta}\\
		&=M_{\Theta}^*M_{A_j^*+zA_{n-j}}M_{\Theta}M_{\Theta}^*M_{A_i^*+zA_{n-i}}M_{\Theta}\\
		&=M_{B_j+zB_{n-j}^*}M_{B_i+zB_{n-i}^*}.
	\end{align*}
	\endgroup
	Therefore, $\left(M_{B_1+zB_{n-1}^*}, \dots, M_{B_{n-i}+zB_{i}^*},M_z\right)$ is a commuting tuple. So, for any $p\in \mathbb{C}[z_1, \dots, z_{n}]$ we have 
	\begingroup
	\allowdisplaybreaks
	\begin{align*}
		\left\|p\left(M_{B_1+zB_{n-1}^*}, \dots, M_{B_{n-i}+zB_{i}^*},M_z\right)\right\|
		& =\left\|M_{\Theta}^*p\left(M_{A_1^*+zA_{n-1}}, \dots, M_{A_{n-1}^*+zA_1},M_z\right)M_{\Theta}\right\|\\
		& \leq \left\|p\left(M_{A_1^*+zA_{n-1}}, \dots, M_{A_{n-1}^*+zA_1},M_z\right)\right\| \\& \leq \|p\|_{\infty, \Gamma_n}.
	\end{align*}
	\endgroup
	Thus $\left(M_{B_1+zB_{n-1}^*}, \dots, M_{B_{n-i}+zB_{i}^*},M_z\right)$ is a $\Gamma_n$-contraction such that $M_z$ is a pure isometry. Therefore, $\left(M_{B_1+zB_{n-1}^*}, \dots, M_{B_{n-i}+zB_{i}^*},M_z\right)$ is a pure $\Gamma_n$-isometry and the proof is complete.
	
\end{proof}

\vspace{0.60cm}

\end{document}